\documentclass{amsart} 
\usepackage{amsmath,amsfonts,amssymb}
\usepackage{multirow}

\usepackage{float}
\usepackage{amssymb, amsthm, amsmath, url}
\usepackage[mathscr]{eucal}
\usepackage{enumerate}
\usepackage{bbm} 



\usepackage{pgf}
\usepackage{tikz}
\usetikzlibrary{positioning}
\usetikzlibrary{arrows,shapes,calc,backgrounds,fit}
\tikzset{%
 unshaded/.style={draw, shape=circle, fill=white, inner sep=1.5pt},
 shaded/.style={draw, shape=circle, fill=black, inner sep=1.5pt},
 invisible/.style={shape=circle, inner sep=1.5pt},
 label/.style={shape=rectangle, inner xsep=6pt, inner ysep=7pt},
 auto,
 curvy/.style={semithick, ->, shorten >=3pt, shorten <=3pt, >=latex, looseness=1.1, bend angle=30},
 straight/.style={semithick, ->, shorten >=3pt,shorten <=3pt, >=latex},
 loopy/.style={semithick, ->, shorten >=3pt, shorten <=3pt, >=latex, min distance=15pt},
 order/.style={thin},
 bigloop/.style={semithick, -, min distance=65pt}}
 \pgfdeclarelayer{background}
 \pgfdeclarelayer{foreground}
 \pgfsetlayers{background,main,foreground}


\numberwithin{equation}{section}

\theoremstyle{plain}
\newtheorem{theorem}{Theorem}[section]
\newtheorem{lemma}[theorem]{Lemma}
\newtheorem{corollary}[theorem]{Corollary} 
\newtheorem{lemma:trans}[theorem]{Transfer Lemma}

\theoremstyle{definition}
\newtheorem{example}[theorem]{Example}

\newtheorem{definition}[theorem]{Definition}

\newtheorem{facts}[theorem]{Facts}
\newtheorem{remark}[theorem]{Remark}


\newcommand{\A}{{\mathbf A}}
\newcommand{\Adown}{\A^{\flat}}
\newcommand{\B}{{\mathbf B}}
\newcommand{\C}{{\mathbf C}}
\newcommand{\Ok}{{\mathbf O}}

\newcommand{\TwB}{{\mathbf 2}}

\newcommand{\CT}{{\mathbb C}}

\newcommand{\TwT}{{\mathbbm 2}}

\newcommand{\X}{{\mathbb X}}
\newcommand{\Y}{{\mathbb Y}}
\newcommand{\Z}{{\mathbb Z}}

\newcommand{\T}{{\mathscr T}}

\newcommand{\cat}[1]{\boldsymbol{\mathscr{#1}}}

\newcommand{\CA}{\cat A}  
\newcommand{\CB}{\cat B}  
\newcommand{\CCF}{\cat C\!_F}  

\newcommand{\CX}{\cat X}
\newcommand{\CXF}{\cat X\!_F} 
\newcommand{\CY}{\cat Y}
\newcommand{\CD}{\cat D}
\newcommand{\CP}{\cat P}
\newcommand{\CV}{\cat V}

\DeclareMathOperator{\ISP}{\mathsf{ISP}}


\newcommand{\id}[1]{\operatorname{id}_{#1}}
\newcommand{\Con}[1]{\operatorname{Con}(#1)}
\newcommand{\Sub}[1]{\operatorname{Sub}(#1)}  
\newcommand{\Var}[1]{\operatorname{Var}(#1)}
\newcommand{\dotcup}{\mathbin{\dot{\cup}}}


\newcommand{\comp}{{\setminus}}

\renewcommand{\le}{\leqslant}
\renewcommand{\ge}{\geqslant}
\newcommand{\nle}{\nleqslant}

\renewcommand{\emptyset}{\varnothing}
\renewcommand{\phi}{\varphi}
\newcommand{\sg}[2]{\operatorname{sg}_{#1}(#2)}

\begin{document}
\title[Restricted Priestley dualities]{Restricted Priestley dualities\\ and discriminator varieties}

\author[B. A. Davey]{Brian A. Davey}
  \address[Brian A. Davey]{Department of Mathematics and Statistics\\La Trobe University\\
    Victoria 3086, Australia}
  \email{B.Davey@latrobe.edu.au}

\author[A. Gair]{Asha Gair}
  \address[Asha Gair]{Department of Mathematics and Statistics\\La Trobe University\\
    Victoria 3086, Australia}
  \email{agair@students.latrobe.edu.au}

\subjclass[2010]{%
  Primary: 06D50,  
  Secondary: 
  			18A40, 
  			08A40, 
			06D30}    
		
\keywords{Priestley duality, quasi-primal algebra, semi-primal algebra, discriminator variety, Cornish algebra, Ockham algebra}


\begin{abstract}
Anyone who has ever worked with a variety~$\CA$ of algebras with a reduct in the variety of bounded distributive lattices will know a restricted Priestley duality when they meet one---but until now there has been no abstract definition. Here we provide one. After deriving some basic properties of a restricted Priestley dual category $\CX$ of such a variety, we give a characterisation, in terms of $\CX$, of finitely generated discriminator subvarieties of~$\CA$. 

As a first application of our characterisation, we give a new proof of Sankappanavar's characterisation of finitely generated discriminator varieties of distributive double p-algebras.

A substantial portion of the paper is devoted to the application of our results to Cornish algebras. A Cornish algebra is a bounded distributive lattice equipped with a family of unary operations each of which is either an endomorphism or a dual endomorphism of the bounded lattice. They are a natural generalisation of Ockham algebras, which have been extensively studied. We give an external necessary-and-sufficient condition and an easily applied, completely internal, sufficient condition for a finite set of finite Cornish algebras to share a common ternary discriminator term and so generate a discriminator variety. Our results give a characterisation of discriminator varieties of Ockham algebras as a special case, thereby yielding Davey, Nguyen and Pitkethly's characterisation of quasi-primal Ockham algebras. 
\end{abstract}

\maketitle

\section{Introduction}
Hilary Priestley published her famous duality for bounded distributive lattices in 1970~\cite{Pri70}, followed closely by a second paper in 1972~\cite{Pri72}. In 1976, just six years after the publication of Priestley's first paper on the duality, the title \emph{Priestley duality} and  the name \emph{Priestley space}, for an object in the dual category, appeared in the literature. 

Since 1970 a large number of authors have obtained a duality for their favourite variety $\CA$ of bounded-distributive-lattice-based algebras by restricting Priestley duality to $\CA$ and giving explicit descriptions of the Priestley duals of algebras from $\CA$ and of  homomorphisms between algebras in $\CA$. The first two such \emph{restricted Priestley dualities}, for Stone algebras~\cite{Pri74} and for pseudocomplemented distributive lattices~\cite{Pri75}, were published by Priestley herself in 1974 and 1975. In 1984, Priestley published a survey of the first ten years of restricted Priestley dualities~\cite{Pri84}. 

While a list of all restricted Priestley dualities published since 1974 is too long to give here, some of particular significance include:
\begin{enumerate}[ \textbullet]

\item pseudocomplemented distributive lattices: Priestley~\cite{Pri75},

\item Heyting algebras: Esakia~\cite{Esa74}---often referred to as Esakia duality, this duality was obtained directly and later seen to be a restricted Priestley duality (see Davey and Galati~\cite{DG02} for some historical comments),

\item Ockham algebras and its subvarieties: Urquhart~\cite{Urq79}, Cornish and Fowler~\cite{CF77,CF79}, Davey and Priestley~\cite{DP87}, and many others,

\item MV-algebras (or equivalently Wajsberg algebras) and more generally implicative lattices: Mart\'inez~\cite{Mart90}, Martínez and Priestley~\cite{MartPri98},

\item Cornish algebras and its subvarieties: Cornish~\cite{Corn77,Corn86}, Priestley~\cite{Pri97}, Priestley and Santos~\cite{PriSan98}---see Section~\ref{sec:Corn} below.

\end{enumerate}

This paper started out life as an attempt to characterise discriminator varieties of Cornish algebras as an application of their restricted Priestley duality. It soon became clear that the techniques being used could be applied much more generally. Whether they could be applied to all restricted Priestley dualities was unclear as, despite the proliferation of restricted Priestley dualities, there was no overarching theory---no definition of what a restricted Priestley duality actually is. 

In Section~\ref{sec:RestPri} we fill this gap and present an abstract definition of a restricted Priestley duality. We then develop some of the basic properties shared by all restricted Priestley dualities. In particular, we study subalgebras of the product of a pair of algebras from $\CA$, via a restricted Priestley duality, as this is precisely what is needed to study finitely generated discriminator subvarieties of~$\CA$.

Let $A$ be a non-empty set. The map $\tau\colon A^3 \to A$ given by
\[
\tau(x,y,z) :=
 \begin{cases}
 x &\text{if $x \neq y$,}\\
 z &\text{if $x = y$,}
 \end{cases}
\]
is called the \emph{ternary discriminator} on $A$. A variety $\CV$ of algebras is a \emph{discriminator variety} if there is a ternary term $t$ in the language of $\CV$ such that $t^\A$ is the ternary discriminator on every subdirectly irreducible algebra $\A\in \CV$. 
A finite algebra $\A$ is called \emph{quasi-primal} if the ternary discriminator is a term function of~$\A$. (See Theorem~\ref{thm:Quasiprimailty} for several equivalent conditions.) The importance of the ternary discriminator was first recognised by Werner~\cite{W70} and Pixley~\cite{P70,P71}. 
Quasi-primal algebras, and more generally, discriminator varieties play an important role in general algebra; see Burris and Sankappanavar~\cite{BS}, Werner~\cite{W78}, and McKenzie and Valeriote~\cite{McKV89}, for example.

Section~\ref{sec:DiscVar} is devoted to finitely generated discriminator varieties in general. A variety is a finitely generated discriminator variety if and only if it is generated by a finite set $\CB$ of quasi-primal algebras that share a common ternary discriminator term, that is, 
\begin{enumerate}[ \textbullet]

\item there is a ternary term $t$ such that $t^\A$ is the ternary discriminator operation on~$\A$, for all $\A\in \CB$. 
\end{enumerate}
In Theorem~\ref{thm:Quasiprimailty} we give three characterisations of this property. While the theorem is a completely straightforward extension of known charaterisations of quasi-primality, we have not been able to find the result 
in the literature. By combining Theorem~\ref{thm:Quasiprimailty} with the results from the previous section, we give a characterisation of finitely generated discriminator subvarieties of a bounded-distributive-lattice-based variety $\CA$ in terms of a restricted Priestley dual category~$\CX$ for~$\CA$ (Theorem~\ref{thm:qpchar}). We also give a similar characterisation of semi-primal algebras in $\CA$ (Theorem~\ref{thm:spchar}). 

Section~\ref{sec:ddp-algebras} illustrates the application of Theorem~\ref{thm:qpchar} by giving a proof via a restricted Priestley duality  that a variety $\CV$ of distributive double p-algebras is a finitely generated discriminator variety if and only if $\CV$ is generated by a finite set of finite simple distributive double p-algebras. The result is not new (it follows from results of Sankappanavar~\cite{San85}), but the proof is new.

Section~\ref{sec:Corn} is devoted to the study of discriminator varieties of Cornish algebras. 
Cornish algebras are a natural generalisation of Ockham algebras. An Ockham algebra is a bounded distributive lattice equipped with a unary operation $f$ that is a dual endomorphism of the underlying bounded lattice. 
In order to define a Cornish algebra (of type $F$) we replace the single unary operation $f$ in the signature of an Ockham algebra by a set $F= F^+\dotcup F^-$ of unary operations, where $F^+$ is a set of endomorphisms and $F^-$ is a set of dual endomorphisms of the underlying bounded lattice.  Ockham algebras are the algebraic counterpart of the non-classical logic in which the De Morgan laws are retained but the law of the excluded middle and the double negation law are removed. Cornish algebras are the algebraic counterpart of non-classical logics in which we allow more than one De Morgan negation and also allow strong modal operators, like the \emph{next} operator in linear temporal logic. 

Quasi-primal Ockham algebras were recently characterised by Davey, Nguyen and Pitkethly, as part of their investigation of Ockham algebras with finitely many relations~\cite{OckAlg}. In Section~~\ref{sec:Corn} we aim to extend their characterisation of quasi-primal Ockham algebras to a characterisation of discriminator varieties of Cornish algebras. 

We show that in order for there to exist a non-trivial quasi-primal Cornish algebra of type~$F$, we must have $F^- \ne\emptyset$ (Theorem~\ref{thm:ord-pres}). We give an external necessary-and-sufficient condition for a finite set of finite Cornish algebras of type $F$ to share a common ternary discriminator term (Theorem~\ref{thm:qpcharCorn}) and use it to derive a purely internal sufficient condition (Theorem~\ref{thm:mainnew}). These results yield the characterisation of discriminator varieties of Ockham algebras, generalising the characterisation of quasi-primal Ockham algebras from~\cite{OckAlg} (Theorem~\ref{ex:Ock}). They also provide a ready supply of quasi-primal Cornish algebras and discriminator varieties of Cornish algebras (see Example~\ref{ex:Corn} and Example~\ref{ex:Corn2}).

\section{Restricted Priestley dualities}\label{sec:RestPri}

While we all recognise one when we meet one, until now there has been no formal theory of restricted Priestley dualities. We now fill this gap by giving a definition of a restricted Priestley duality and presenting some basic consequences of the definition.

Let $\CA$ be a category of algebras that have a bounded distributive lattice as a (term) reduct. We can obtain a duality for $\CA$ by taking the dual category $\CX$ to be the (usually not full) subcategory of Priestley spaces corresponding to the algebras in $\CA$ and the homomorphisms between them. Such a \emph{restricted Priestley duality} will be most useful if we have an abstract description of the objects and morphisms in~$\CX$. The general set up of such restricted Priestley dualities is described below. We begin with a brief description of Priestley duality in the form we require.

A topological structure $\X =\langle X; \le, \T \rangle$  is a \textit{Priestley space} if $\langle X;\le \rangle$ is an ordered set, $\langle X; \T \rangle$ is a compact topological space, and, for all $x, y \in X$ with $x \nle y$, there is a clopen up-set $V$ with $x \in V$ and $y \notin V$. 

We shall denote the categories of bounded distributive lattices  and Priestley spaces by $\CD$ and~$\CP$, respectively. The morphisms of $\CD$ and~$\CP$ are the natural ones: namely lattice homomorphisms preserving the bounds, and continuous order-preserving maps, respectively. Priestley duality for bounded distributive lattices~\cite{Pri70,Pri72} tells us that these categories are dually equivalent, via the contravariant functors $H \colon \CD \to \CP$ and $K \colon \CP \to \CD$ given in the following two definitions.

\begin{definition}
Let $\TwB = \langle \{0,1\}; \vee, \wedge, 0, 1\rangle$ be the $2$-element bounded lattice and let $\TwT = \langle \{0,1\}; \le, \T\rangle$ be the discretely topologised $2$-element chain. For each bounded distributive lattice $\A  =\langle A; \vee, \wedge, 0, 1\rangle$, define the Priestley space
\[
H(\A) = \langle \CD(\A, \TwB); \le, \T\rangle, 
\]
with order and topology inherited from $\TwT^A$. For each Priestley space~$\X =\langle X; \le, \T \rangle$, define the bounded distributive lattice
\[
K(\X) = \langle \CP(\X, \TwT); \vee, \wedge, 0, 1 \rangle, 
\]
with operations inherited from $\TwB^X$.
\end{definition}

The hom-functors $H$ and $K$ are defined on morphisms in the usual way, as are the natural transformations $e\colon\id\CD \to KH$ and $\varepsilon\colon \id\CP \to HK$.

\begin{definition}\label{def:HKMor}\
\begin{enumerate}[\quad \textbullet]
\item
For each homomorphism $\varphi \colon \A \to \B$ in $\CD$, define $H(\varphi) \colon H(\B) \to H(\A)$ by $H(\varphi)(x) := x \circ \varphi$, for all $x \colon \B \to \TwB$.
\item
For each morphism $\psi \colon \X \to \Y$ in $\CP$, define $K(\psi) \colon K(\Y) \to K(\X)$ by $K(\psi)(\alpha) = \alpha \circ \psi$, for all $\alpha \colon \Y \to \TwT$.
\item
For each $\A\in \CD$, the homomorphism $e_\A \colon \A \to KH(\A)$ is given by evaluation, that is, $e_\A(a)(x) = x(a)$, for all $a \in A$ and $x \colon \A \to \TwB$.
\item
For each $\X\in \CP$, the morphism $\varepsilon_\X \colon \X \to HK(\X)$ is given by evaluation, that is, $\varepsilon_\X(x)(\alpha) = \alpha(x)$, for all $x \in X$ and $\alpha \colon \X \to \TwT$.
\end{enumerate}
\end{definition}

\begin{theorem}[Priestley~\cite{Pri70,Pri72}]
Let $\CD$ and $\CP$ be the categories of bounded distributive lattices and Priestley spaces, respectively.

\begin{enumerate}[\quad \normalfont (1)]

\item $H\colon \CD \to \CP$ and $K\colon \CP \to \CD$ are well-defined functors that yield a dual category equivalence between $\CD$ and $\CP$. 

\item The maps $e_\A \colon \A \to KH(\A)$ and $\varepsilon_\X \colon \X \to HK(\X)$ are isomorphisms, for every $\A\in \CD$ and every $\X\in \CP$.

\end{enumerate}
\end{theorem}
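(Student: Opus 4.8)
The plan is to verify well-definedness of the two hom-functors on objects and on morphisms, to establish that $e$ and $\varepsilon$ are natural isomorphisms, and then to invoke a general categorical principle: a pair of contravariant functors equipped with natural isomorphisms $\id\CD \cong KH$ and $\id\CP \cong HK$ constitutes a dual category equivalence. Thus part~(2), together with functoriality, delivers part~(1), and the bulk of the work lies in proving~(2).

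First I would check that $H(\A)$ is genuinely a Priestley space. The order on $\CD(\A,\TwB)$ is inherited from the product order on $\TwT^A$, so $\langle \CD(\A,\TwB); \le\rangle$ is an ordered set. For compactness, $\TwT^A$ is compact by Tychonoff's theorem, and each of the conditions defining a bounded lattice homomorphism into $\TwB$---preservation of $\vee$, $\wedge$, $0$ and $1$---is a closed condition in the product topology; hence $\CD(\A,\TwB)$ is a closed, and therefore compact, subspace. For the Priestley separation axiom, suppose $x \nle y$ in $\CD(\A,\TwB)$; then $x(a)=1$ and $y(a)=0$ for some $a\in A$, and the set $V := \{\,z \mid z(a)=1\,\}$ is a clopen up-set with $x\in V$ and $y\notin V$. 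Dually, I would verify that $K(\X)$ is a bounded distributive lattice: the continuous order-preserving maps $\X\to\TwT$ are exactly the characteristic functions of the clopen up-sets of $\X$, and these are closed under the pointwise operations inherited from $\TwB^X$, with the two constant maps serving as bounds. Functoriality is then routine: $H(\varphi)$ is continuous and order-preserving and $K(\psi)$ is a bounded lattice homomorphism, and both assignments preserve identities and reverse composition.

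Next I would prove that $e_\A$ and $\varepsilon_\X$ are isomorphisms. That $e_\A$ is a homomorphism and $\varepsilon_\X$ a morphism of Priestley spaces, and that both are natural, are direct computations from the evaluation formulas. Injectivity of $e_\A$ is the assertion that the homomorphisms into $\TwB$ separate the points of $\A$: given $a\nle b$, the Prime Ideal Theorem (equivalently, the existence of enough prime filters in a distributive lattice) produces a prime filter containing $a$ but not $b$, whose characteristic map $x$ satisfies $e_\A(a)(x)=1\ne 0 = e_\A(b)(x)$. Surjectivity of $e_\A$ says that every clopen up-set of $H(\A)$ has the form $e_\A(a)$; this is where compactness does the real work, allowing an arbitrary clopen up-set to be written as a finite lattice combination of the basic evaluation sets $\{\,x \mid x(a)=1\,\}$. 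For $\varepsilon_\X$, the Priestley separation axiom yields injectivity and the order-embedding property, while surjectivity---that every point of $HK(\X)$, that is, every homomorphism $K(\X)\to\TwB$, arises from evaluation at a genuine point of $\X$---again follows from compactness, since the clopen up-sets singled out by such a homomorphism have the finite intersection property and hence a common point. A standard continuity-plus-compactness argument upgrades $\varepsilon_\X$ to a homeomorphism.

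I expect the two surjectivity clauses to be the main obstacles: recovering \emph{all} clopen up-sets of $H(\A)$ as evaluations, and recovering \emph{all} points of $HK(\X)$ from $\X$. Both hinge on the interplay between compactness and the Priestley separation axiom, and the injectivity of $e_\A$ rests on a non-constructive choice principle, the Prime Ideal Theorem. By contrast, the well-definedness and naturality verifications are mechanical. Once $e$ and $\varepsilon$ are established as natural isomorphisms, part~(1) is immediate.
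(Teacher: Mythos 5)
The paper does not prove this theorem at all: it is stated as background and attributed outright to Priestley \cite{Pri70,Pri72}, so there is no internal argument to compare yours against. Judged on its own, your outline is the standard textbook proof (essentially as in \cite{ILO}) and its architecture is sound: $\CD(\A,\TwB)$ is closed in $\TwT^A$ because each preservation condition constrains only finitely many coordinates, hence compact by Tychonoff; the basic sets $\{\,z \mid z(a)=1\,\}$ witness the Priestley separation axiom; $K(\X)$ is the lattice of characteristic functions of clopen up-sets; injectivity of $e_\A$ rests on the prime filter theorem; surjectivity of $e_\A$ is the double-compactness argument expressing a clopen up-set as a finite union of finite intersections of basic sets, which collapses to a single $e_\A(c)$ because the basic sets are closed under the lattice operations; and your closing appeal to the general principle that contravariant functors equipped with natural isomorphisms $\id\CD \cong KH$ and $\id\CP \cong HK$ constitute a dual equivalence is legitimate---no triangle identities are needed to conclude mere equivalence.

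One clause deserves tightening: for the surjectivity of $\varepsilon_\X$ the finite-intersection-property argument must be run on a \emph{two-sided} family. Given a homomorphism $u \colon K(\X) \to \TwB$, one takes the clopen up-sets $V$ with $u(\chi_V)=1$ \emph{together with} the complements $X \comp W$ of those clopen up-sets $W$ with $u(\chi_W)=0$; the FIP holds because $V_1 \cap \dots \cap V_n \subseteq W_1 \cup \dots \cup W_m$ would force $1 = u(\chi_{V_1} \wedge \dots \wedge \chi_{V_n}) \le u(\chi_{W_1} \vee \dots \vee \chi_{W_m}) = 0$. Your phrasing---``the clopen up-sets singled out by such a homomorphism have the finite intersection property''---suggests intersecting only the up-sets on which $u$ takes the value $1$; a common point $x$ of those yields only $\varepsilon_\X(x) \ge u$ pointwise, not equality, since $x$ may still lie in some $W$ with $u(\chi_W)=0$. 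With that repair, plus the continuous-bijection-from-compact-to-Hausdorff upgrade you already invoke, the sketch is complete and matches the classical proof of the cited result.
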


We now give our definition of a restricted Priestley duality. Though much of our theory applies to more general categories, to simplify the development we shall henceforth assume that $\CA$ is a variety of distributive-lattice-based algebras.

\begin{definition}\label{def:restPr}
Let $\CA$ be a variety of algebras that has a term reduct in $\CD$ and let ${}^\flat\colon \CA \to \CD$ be the forgetful functor. Let $\CX$ be a category and let ${}^\flat\colon \CX\to \CP$ be a functor. Assume that $D\colon \CA \to \CX$ and $E\colon \CX\to \CA$ are functors that yield a dual category equivalence between $\CA$ and $\CX$ with unit $e\colon \id\CA \to ED$ and counit $\varepsilon\colon \id\CX \to DE$. We then say that $\langle D, E, e, \varepsilon\rangle$ is a \emph{restricted Priestley duality} between $\CA$ and $\CX$ (with \emph{underlying-Priestley-space functor} ${}^\flat\colon \CX\to \CP$) if 
\begin{enumerate}[ (1)]

\item
the squares in Figure~\ref{fig:RestPries} commute, and 

\item $e_\A^\flat = e_{\A^\flat}$ and $\varepsilon_\X^\flat = \varepsilon_{\X^\flat}$, for all $\A\in \CA$ and all $\X\in \CX$. 
\end{enumerate}
\begin{figure}
\begin{tikzpicture}
 [node distance=1cm,
 auto,
 text depth=0.25ex]
\begin{scope}
\node (A) {$\CA$};
\node (X) [right=of A] {$\CX$};
\node (D) [below=of A] {$\CD$};
\node (P) [right=of D] {$\CP$};
\draw[->,semithick] (A) to node {$D$} (X);
\draw[->,semithick] (A) to node [swap] {$^\flat$} (D);
\draw[->,semithick] (X) to node {$^\flat$} (P);
\draw[->,semithick] (D) to node [swap] {$H$} (P);
\end{scope}
\begin{scope}[xshift=3.5cm]
\node (A) {$\CA$};
\node (X) [right=of A] {$\CX$};
\node (D) [below=of A] {$\CD$};
\node (P) [right=of D] {$\CP$};
\draw[<-,semithick] (A) to node {$E$} (X);
\draw[->,semithick] (A) to node [swap] {$^\flat$} (D);
\draw[->,semithick] (X) to node {$^\flat$} (P);
\draw[<-,semithick] (D) to node [swap] {$K$} (P);
\end{scope}
\end{tikzpicture}
\caption{Restricted Priestley duality}\label{fig:RestPries}
\end{figure}
In particular, as a consequence of (1), we have
\begin{enumerate}[ (1)]

\item[(3)] $D(\A)^\flat = H(\A^\flat)$, for all $\A\in \CA$, and

\item[(4)] $E(\X)^\flat = K(\X^\flat)$, for all $\X\in \CX$.

\end{enumerate} 
It follows from (3) and (4) that $ED(\A)^\flat = KH(\A^\flat)$ and $DE(\X)^\flat = HK(\X^\flat)$. This guarantees that $e_\A^\flat$ and $e_{\A^\flat}$ have the same domain and codomain, as do $\varepsilon_\X^\flat$ and $\varepsilon_{\X^\flat}$, whence Condition~(2) makes sense. It also follows at once from (3) and (4) that the underlying sets of $D(\A)$ and $E(\X)$ are $\CD(\A^\flat, \TwB)$ and $\CP(\X^\flat, \TwT)$, respectively.
 
A morphism $\phi\colon \X \to \Y$ in $\CX$ will be called \emph{$\CP$-surjective} if its underlying Priestley morphism $\phi^\flat\colon \X^\flat \to \Y^\flat$ is surjective. Similarly, $\phi$ is a \emph{$\CP$-embedding} if its underlying Priestley morphism is an embedding of Priestley spaces. 
\end{definition}

\begin{remark}
Given a restricted Priestley duality we are dealing simultaneously with two dual equivalences: $D, E$ between $\CA$ and $\CX$, and $H, K$ between $\CD$ and~$\CP$. Note that, to simplify the notation, we denote the units and counits of the associated dual adjunctions by 
\[
e\colon \id\CA \to ED,\ \varepsilon\colon \id\CX \to DE \quad\text{and}\quad e\colon \id\CD \to KH,\ \varepsilon\colon \id\CP \to HK.
\]
Which of these natural transformations is actually involved will always be clear from the context.
\end{remark}

The following lemma records some easy consequence of this definition.

\begin{lemma} 
Let $\langle D, E, e, \varepsilon\rangle$ be a restricted Priestley duality between $\CA$ and $\CX$.
\begin{enumerate}[\quad \normalfont(1)]

\item The functor $^\flat\colon \CX\to \CP$ is naturally isomorphic to the composite $H\circ {}^\flat\circ E$, and hence is faithful.

\item Both $^\flat \colon \CA \to \CD$ and $\flat\colon \CX \to \CP$ reflect isomorphisms.

\end{enumerate}
\end{lemma}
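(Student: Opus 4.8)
The plan for part~(1) is to extract the required natural isomorphism directly from the counit $\varepsilon\colon \id\CX \to DE$, which is a natural isomorphism because $\langle D, E, e, \varepsilon\rangle$ is a dual equivalence. Whiskering $\varepsilon$ on the left by the functor ${}^\flat\colon \CX \to \CP$ produces a natural transformation whose component at $\X$ is $\varepsilon_\X^\flat$, with source ${}^\flat$ and target ${}^\flat\circ DE$. Reading property~(3) of Definition~\ref{def:restPr} in its functorial form ${}^\flat\circ D = H\circ{}^\flat$, I would then compute
\[
{}^\flat\circ DE = ({}^\flat\circ D)\circ E = (H\circ{}^\flat)\circ E = H\circ{}^\flat\circ E,
\]
so that the target of the whiskered transformation is precisely $H\circ{}^\flat\circ E$. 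Since each $\varepsilon_\X$ is an isomorphism in $\CX$ and ${}^\flat$, being a functor, preserves isomorphisms, each component $\varepsilon_\X^\flat$ is an isomorphism in $\CP$; hence the transformation is a natural isomorphism ${}^\flat\cong H\circ{}^\flat\circ E$. Faithfulness follows at once: $H$ and $E$ are faithful as halves of dual equivalences, and ${}^\flat\colon\CA\to\CD$ is faithful because a forgetful functor between varieties is injective on morphisms; thus $H\circ{}^\flat\circ E$ is faithful, and any functor naturally isomorphic to a faithful functor is faithful.

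For part~(2) I would handle the two functors in turn, starting with the concrete one. Suppose $\varphi\colon\A\to\B$ in $\CA$ has $\varphi^\flat$ an isomorphism in $\CD$. Then $\varphi^\flat$ is a bijection, so $\varphi$ is a bijective homomorphism of algebras in the variety $\CA$; by the standard universal-algebra fact that the set-theoretic inverse of a bijective homomorphism is again a homomorphism, $\varphi$ is an isomorphism in $\CA$. This is the one genuinely concrete input to the lemma, and it is exactly where the assumption that $\CA$ is a variety is used. To deduce that ${}^\flat\colon\CX\to\CP$ also reflects isomorphisms, I would bootstrap through part~(1): the composite $H\circ{}^\flat\circ E$ reflects isomorphisms because each of its factors does---$E$ and $H$ reflect isomorphisms as (contravariant) full and faithful functors, and ${}^\flat\colon\CA\to\CD$ reflects them by the preceding paragraph---and the property of reflecting isomorphisms is invariant under natural isomorphism. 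Since ${}^\flat\cong H\circ{}^\flat\circ E$ by part~(1), the functor ${}^\flat\colon\CX\to\CP$ reflects isomorphisms.

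The argument is essentially formal bookkeeping, so the only real obstacle is conceptual rather than technical. The point is that $\CX$ is an abstract category, so one cannot argue there, as one does in $\CA$, that a morphism whose underlying map is bijective must be invertible; the role of part~(1) is precisely to transport the concrete statement about ${}^\flat\colon\CA\to\CD$ across the two dual equivalences to the abstract side. The one place demanding care is keeping the variances consistent: $D$ and $E$ are contravariant, so $DE$ is covariant and matches the covariant functor ${}^\flat\colon\CX\to\CP$, and likewise $H\circ{}^\flat\circ E$ is covariant, as it must be in order to be naturally isomorphic to ${}^\flat$.
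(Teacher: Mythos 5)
Your proposal is correct and follows essentially the same route as the paper: by Condition~(2) of Definition~\ref{def:restPr} your whiskered component $\varepsilon_\X^\flat$ is literally the paper's chosen component $\varepsilon_{\X^\flat}$ (the Priestley counit), so you construct the very same natural isomorphism ${}^\flat \cong H\circ{}^\flat\circ E$, merely obtaining naturality for free by whiskering where the paper invokes naturality of $\varepsilon\colon \id\CP\to HK$. Part~(2) likewise matches the paper's argument exactly---the trivial concrete case for ${}^\flat\colon\CA\to\CD$, then transporting iso-reflection across the composite $H\circ{}^\flat\circ E$ via part~(1).
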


\begin{proof}
(1) For all $\X\in \CX$, we have 
\[
(H\circ {}^\flat\circ E)(\X) = H(E(\X)^\flat) = HK(\X^\flat).
\]
Hence we may define a natural isomorphism $f\colon {}^\flat \to H\circ {}^\flat\circ E$ by specifying that, $f_\X = \varepsilon_{\X^\flat}\colon \X^\flat  \to (H\circ {}^\flat\circ E)(\X)$, for all $\X\in \CX$, where $\varepsilon\colon \id\CP \to HK$ is the counit of the dual equivalence between $\CD$ and $\CP$.

(2) It is trivial that $^\flat \colon \CA \to \CD$ reflects isomorphisms. Let $\phi$ be a morphism in $\CX$ and assume that $\phi^\flat$ is an isomorphism in $\CP$. By (1) it follows that $(H\circ {}^\flat\circ E)(\phi)$ is an isomorphism. As $E\colon \CX \to \CA$, ${}\flat \colon \CA\to \CD$ and $H\colon \CD \to \CP$ all reflect isomorphisms, it follows that $\phi$ is an isomorphism. Hence $\flat\colon \CX \to \CP$ reflects isomorphisms.
\end{proof}

\begin{lemma}\label{lem:sur-inj}
Let $\langle D, E, e, \varepsilon\rangle$ be a restricted Priestley duality between $\CA$ and $\CX$.
\begin{enumerate}[ \normalfont(1)]

\item A morphism $u\colon \A\to \B$ in $\CA$ is surjective if and only if $D(u)\colon D(\B) \to D(\A)$ is a $\CP$-embedding, and is an embedding if and only if $D(u)\colon D(\B) \to D(\A)$ is $\CP$-surjective.

\item A morphism $\phi\colon \X \to \Y$ in $\CX$ is $\CP$-surjective if and only if $E(\phi)\colon E(\Y) \to E(\X)$ is an embedding, and is a $\CP$-embedding if and only if $E(\phi)\colon E(\Y) \to E(\X)$ is surjective.

\end{enumerate}
\end{lemma}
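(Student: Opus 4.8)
The plan is to reduce everything to two standard facts about Priestley duality proper---that a bounded-distributive-lattice homomorphism $g$ is surjective if and only if its dual $H(g)$ is an embedding of Priestley spaces, and is an embedding if and only if $H(g)$ is surjective, together with the mirror-image statements for $K$---and then to transport these facts to the pair $\CA$, $\CX$ using the commuting squares of Definition~\ref{def:restPr}.

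The first observation I would record is that the forgetful functor $^\flat\colon\CA\to\CD$ neither destroys nor manufactures surjectivity or injectivity. A homomorphism $u\colon\A\to\B$ in $\CA$ and its reduct $u^\flat\colon\A^\flat\to\B^\flat$ are the same underlying map on the same underlying set, so $u$ is surjective exactly when $u^\flat$ is, and $u$ is an embedding (an injective homomorphism) exactly when $u^\flat$ is an embedding of bounded distributive lattices---injectivity of a lattice homomorphism already forcing the order-reflecting condition. Commutation of the left-hand square of Figure~\ref{fig:RestPries} (Definition~\ref{def:restPr}(1)) gives, on morphisms, $D(u)^\flat = H(u^\flat)$, and by definition $D(u)$ is a $\CP$-embedding (respectively $\CP$-surjective) precisely when its underlying Priestley morphism $D(u)^\flat$ is an embedding (respectively surjective). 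Every link in the required chain is then in place.

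For part~(1) I would simply compose these equivalences: $u$ surjective $\iff$ $u^\flat$ surjective $\iff$ $H(u^\flat)$ a Priestley embedding $\iff$ $D(u)^\flat$ a Priestley embedding $\iff$ $D(u)$ a $\CP$-embedding, invoking in turn the forgetful functor, the standard Priestley fact, the commuting square, and the definition of $\CP$-embedding. The second equivalence of part~(1) is word-for-word the same with ``surjective'' and ``embedding'' interchanged, now using the companion Priestley fact. Part~(2) runs identically with $E$, $K$ and the right-hand square of Figure~\ref{fig:RestPries} in place of $D$, $H$ and the left-hand one: for $\phi\colon\X\to\Y$ in $\CX$ one has $\phi$ $\CP$-surjective $\iff$ $\phi^\flat$ surjective $\iff$ $K(\phi^\flat)=E(\phi)^\flat$ an embedding $\iff$ $E(\phi)$ an embedding, and dually for $\CP$-embeddings. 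Alternatively, part~(2) can be extracted from part~(1) applied to $u=E(\phi)$, using that $\varepsilon_\X$ and $\varepsilon_\Y$ are isomorphisms so that $DE(\phi)$ shares the $\CP$-embedding and $\CP$-surjectivity status of $\phi$.

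I do not anticipate a genuine obstacle; the substance of the lemma is carried entirely by the two Priestley facts and the commuting squares. The only points calling for care are bookkeeping ones: keeping the contravariance of $D$ and $E$ straight, so that a surjection on one side is matched with an embedding on the other, and confirming---as above---that ``embedding'' in the algebraic category $\CA$ is witnessed by plain injectivity and hence corresponds, under $^\flat$, to ``embedding'' in $\CD$.
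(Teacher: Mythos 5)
Your proposal is correct and follows essentially the same route as the paper, whose proof is merely a sketch citing exactly the ingredients you use: the commuting squares of Figure~\ref{fig:RestPries}, the trivial fact that ${}^\flat\colon \CA\to\CD$ preserves and reflects surjectivity and injectivity, and the standard Priestley-duality fact (\cite[Theorem 5.19]{ILO}) that $H$ and $K$ interchange surjections and embeddings. You have simply written out in full the chains of equivalences the paper leaves to the reader.
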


\begin{proof}
The proof is straightforward. It uses the squares in Figure~\ref{fig:RestPries} along with (a) the trivial fact that $^\flat\colon \CA\to \CD$ preserves and reflects surjectivity and injectivity of homomorphisms, and (b) the well-known property of Priestley duality that morphisms $v\colon \mathbf L\to \mathbf K$ in $\CD$ and  $\psi\colon \mathbb U\to \mathbb V$ in $\CP$ are surjective (are embeddings) if and only if the corresponding morphisms $H(v)\colon H(\mathbf K)\to H(\mathbf L)$ and $K(\psi)\colon K(\mathbb V)\to K(\mathbb U)$ are embeddings (are surjective); see \cite[Theorem 5.19]{ILO}.
\end{proof}

The next two results are simple but important consequences of this lemma. 
Let $\X \in \CX$. A closed subset $\Y$ of the Priestley space $\X^\flat$ is called an \emph{$\CX$-substructure} of~$\X^\flat$ if there is a $\CP$-embedding $\psi\colon \Z \to \X$ in $\CX$ such that $\psi^\flat(\Z^\flat) = \Y$. The ordered set of $\CX$-substructures of $\X^\flat$ is denoted by $\Sub \X$. 

\begin{lemma}\label{lem:Con}   
Let $\langle D, E, e, \varepsilon\rangle$ be a restricted Priestley duality between $\CA$ and $\CX$.
Let $\A\in \CA$ and let $\X=D(\A)$. Then the lattice of congruences on $\A$ is dually isomorphic to $\Sub\X$. 
\end{lemma}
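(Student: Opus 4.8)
The plan is to build an explicit order-reversing bijection between $\Con{\A}$ and $\Sub\X$ and to verify that it and its inverse preserve order, so that it is in fact a dual isomorphism of ordered sets (indeed of lattices). The key observation is that Lemma~\ref{lem:sur-inj} lets me translate the two sides of the correspondence into the dual language: congruences on $\A$ correspond (up to the appropriate equivalence) to surjections out of $\A$, and these dualise to $\CP$-embeddings into $\X = D(\A)$, which is exactly what an $\CX$-substructure of $\X^\flat$ records.

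First I would set up the classical correspondences on each side. On the algebraic side, every congruence $\theta\in\Con{\A}$ determines a surjection $u_\theta\colon\A\to\A/\theta$, and two surjections out of $\A$ are identified when they have the same kernel; thus $\Con{\A}$ is (anti-)isomorphic to the ordered set of surjections out of $\A$ modulo isomorphism over $\A$, where a larger congruence corresponds to a surjection onto a smaller quotient. On the dual side, by Lemma~\ref{lem:sur-inj}(1) the functor $D$ sends a surjection $u\colon\A\to\B$ to a $\CP$-embedding $D(u)\colon D(\B)\to D(\A)=\X$, and the image $D(u)^\flat\bigl(D(\B)^\flat\bigr)$ is, by definition, an $\CX$-substructure of $\X^\flat$. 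So I would define the map $\Con{\A}\to\Sub\X$ by
\[
\theta \longmapsto D(u_\theta)^\flat\bigl(D(\A/\theta)^\flat\bigr),
\]
and check it is well defined (independent of the choice of representative surjection, using that $D$ is a functor and that isomorphic quotients give the same image).

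Next I would produce the inverse and confirm bijectivity. Given an $\CX$-substructure $\Y$ of $\X^\flat$, there is by definition a $\CP$-embedding $\psi\colon\Z\to\X$ with $\psi^\flat(\Z^\flat)=\Y$; applying $E$ and using Lemma~\ref{lem:sur-inj}(2), $E(\psi)\colon E(\X)\to E(\Z)$ is surjective, and since $E(\X)\cong\A$ via the counit, this yields a surjection out of $\A$ and hence a congruence. The duality (the fact that $D$ and $E$ are mutually inverse up to the natural isomorphisms $e$ and $\varepsilon$) will give that these two assignments are inverse to each other; here I would lean on the fact, from the first lemma in the excerpt, that $^\flat$ is faithful and reflects isomorphisms, so that a $\CP$-embedding is determined up to the right notion of equivalence by its image $\Y$, matching the identification of surjections by their kernels.

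Finally I would verify that the bijection reverses order. The content is that if $\theta_1\le\theta_2$ in $\Con{\A}$ then there is a surjection $\A/\theta_1\to\A/\theta_2$ commuting with the quotient maps, which dualises to a $\CP$-embedding of the corresponding substructures in the reverse direction, giving containment of images the other way; conversely, containment of $\CX$-substructures factors the embeddings and dualises back to comparability of congruences. The main obstacle, and the step requiring the most care, is the well-definedness and faithfulness issue: I must ensure that the identification of an $\CX$-substructure with a mere \emph{image} $\Y\subseteq\X^\flat$ is compatible with the identification of congruences with kernels, i.e.\ that distinct $\CX$-substructures correspond to genuinely distinct congruences and that the order embedding between substructures is canonical. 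This is exactly where faithfulness of $^\flat\colon\CX\to\CP$ (established in the preceding lemma) and the embedding/surjection dictionary of Lemma~\ref{lem:sur-inj} do the essential work, reducing the whole statement to the known fact that $\Con{\A}$ is dually isomorphic to the lattice of homomorphic images of $\A$.
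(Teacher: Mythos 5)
Your proposal is correct and takes essentially the same approach as the paper: the map you define, $\theta \mapsto D(\eta_\theta)^\flat\bigl(D(\A/\theta)^\flat\bigr)$ where $\eta_\theta\colon \A\to\A/\theta$ is the natural quotient map, is exactly the paper's map $u$, and the paper likewise reduces the verification that $u$ is a dual order-isomorphism to Lemma~\ref{lem:sur-inj}, leaving as a ``straightforward exercise'' the details that your sketch fills in. (One trivial slip: the isomorphism $\A \cong ED(\A) = E(\X)$ is the unit $e_\A$, not the counit.)
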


\begin{proof}
Define $u\colon \Con\A \to \Sub\X$ as follows. Let $\eta_\theta \colon \A\to \A/\theta$ be the natural map, for each $\theta \in \Con \A$. Define $\Y_\theta = D(\A/\theta)$; thus, $\psi_\theta := D(\eta_\theta) \colon \Y_\theta \to \X$, which is a $\CP$-embedding by Lemma~\ref{lem:sur-inj}(1). Finally, define $u(\theta) := \psi_\theta^\flat(\Y_\theta^\flat)\subseteq \X^\flat$. It is a straightforward exercise, using Lemma~\ref{lem:sur-inj}, to show that $u$ is a dual order-isomorphism.
\end{proof}

\begin{lemma}\label{lem:factorise}
Let $\langle D, E, e, \varepsilon\rangle$ be a restricted Priestley duality between $\CA$ and $\CX$. Each morphism $\phi\colon \X \to \Y$ in $\CX$ factors in $\CX$ as a $\CP$-surjection $\mu\colon \X\to \Z$ followed by a $\CP$-embedding $\psi\colon \Z \to \Y$. Given two such factorisations $\mu_1\colon \X\to \Z_1$, $\psi_1\colon \Z_1 \to \Y$  and $\mu_2\colon \X\to \Z_2$, $\psi_2\colon \Z_2 \to \Y$  of $\phi$, there is an isomorphism $\gamma \colon \Z_1 \to \Z_2$ in $\CX$ such that $\gamma \circ \mu_1 = \mu_2$ and $\psi_2\circ \gamma = \psi_1$.
\end{lemma}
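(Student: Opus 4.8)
The plan is to transport the statement across the duality and reduce it to the corresponding fact about Priestley spaces, which in turn follows from the standard epi--mono factorisation of continuous order-preserving maps. By the dual equivalence between $\CA$ and $\CX$, applying $E$ to $\phi\colon \X\to\Y$ yields a morphism $E(\phi)\colon E(\Y)\to E(\X)$ in $\CA$, and by Lemma~\ref{lem:sur-inj} the desired $\CP$-surjection/$\CP$-embedding factorisation of $\phi$ corresponds exactly to an embedding/surjection factorisation of $E(\phi)$ in $\CA$. Thus the existence half amounts to factorising a homomorphism $v\colon \B\to\A$ in $\CA$ as a surjection $\B\twoheadrightarrow \C$ followed by an embedding $\C\hookrightarrow\A$, which is just the image factorisation: take $\C$ to be the subalgebra $v(\B)$ of $\A$, let the surjection be the corestriction of $v$ and the embedding be the inclusion. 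Pulling this back through $D$ (equivalently, reading it off directly from $\phi$) gives the required $\mu\colon\X\to\Z$ and $\psi\colon\Z\to\Y$ with $\phi=\psi\circ\mu$.

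First I would set up the factorisation on the Priestley-space level, since Definition~\ref{def:restPr}(3)--(4) tell us that $^\flat$ carries everything to genuine Priestley data. The underlying map $\phi^\flat\colon\X^\flat\to\Y^\flat$ factors in $\CP$ as the corestriction onto its image followed by the inclusion of $\phi^\flat(\X^\flat)$ into $\Y^\flat$; the image is a closed up-and-down-nothing subset (a closed subspace with inherited order and topology), hence a Priestley space, so this is a bona fide factorisation in $\CP$. The point of working through $E$ rather than naively in $\CP$ is that I must produce a factorisation \emph{inside $\CX$}, not merely in $\CP$: the intermediate object $\Z$ has to be an $\CX$-object whose underlying Priestley space is this image. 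This is exactly guaranteed by running the image factorisation in $\CA$ and applying $D$, because then $\Z:=D(\C)$ is automatically in $\CX$ and, by (3), $\Z^\flat=H(\C^\flat)$ is the correct Priestley space.

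For uniqueness, given two factorisations $\mu_i\colon\X\to\Z_i$, $\psi_i\colon\Z_i\to\Y$ of $\phi$, I would again pass through $E$ to the corresponding factorisations of $E(\phi)$ in $\CA$. There each $E(\Z_i)$ is (up to isomorphism) a subalgebra of $E(\Y)$ equal to the image of $E(\phi)$, and any two image factorisations of a homomorphism in a variety are related by a unique connecting isomorphism $\delta\colon E(\Z_2)\to E(\Z_1)$ compatible with both the surjections and the embeddings. Setting $\gamma:=D(\delta)\colon\Z_1\to\Z_2$ and using functoriality of $D$ together with the duality equations $DE\cong\id$, the identities $\gamma\circ\mu_1=\mu_2$ and $\psi_2\circ\gamma=\psi_1$ follow from the corresponding identities for $\delta$. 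To convert these into the stated equalities rather than mere commutativity up to the counit, I would invoke the naturality of $\varepsilon\colon\id\CX\to DE$ and the fact, from the lemma preceding Lemma~\ref{lem:sur-inj}, that $^\flat\colon\CX\to\CP$ is faithful, so it suffices to check the two equations after applying $^\flat$, where they reduce to the evident equalities of the underlying Priestley maps onto a common image.

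The main obstacle I anticipate is bookkeeping rather than conceptual: ensuring that the intermediate object really lives in $\CX$ (not just a subspace of $\Y^\flat$ in $\CP$) and that $\mu,\psi$ are genuine $\CX$-morphisms with the claimed $\CP$-surjective/$\CP$-embedding properties. Producing $\Z$ as $D$ of the image algebra sidesteps this cleanly, since Lemma~\ref{lem:sur-inj} then delivers the surjective/embedding dichotomy for free. The only genuine care needed is in the uniqueness clause, where I must promote ``unique up to the canonical isomorphism of images in $\CA$'' to the precise commuting-triangle statement in $\CX$; faithfulness of $^\flat$ makes this routine by checking on underlying spaces.
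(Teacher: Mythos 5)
Your proposal is correct and follows essentially the same route as the paper, which proves the lemma by transferring it through the duality via Lemma~\ref{lem:sur-inj} to the image factorisation of homomorphisms in $\CA$ and its uniqueness up to isomorphism. Your write-up simply fills in the bookkeeping (conjugating by the counit $\varepsilon$ and using faithfulness of $^\flat$) that the paper's one-line proof leaves implicit.
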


\begin{proof}
This is a simple consequence of the Lemma~\ref{lem:sur-inj} using the fact that every homomorphism in $\CA$ has a factorisation as a surjective homomorphism followed by an embedding and that this factorisation is unique up to isomorphism.
\end{proof}

The underlying-Priestley-space functor ${}^\flat$ of a restricted Priestley duality usually does not preserve products. Indeed, providing an explicit description of products in the restricted Priestley dual category $\CX$ is a basic research problem as it immediately yields a description of coproducts in~$\CA$. 

Coproducts in $\CX$ are much better behaved: the underlying-Priestley-space functor $^\flat$ both preserves and reflects coproducts.

\begin{lemma}\label{lem:coprod}
Let $\langle D, E, e, \varepsilon\rangle$ be a restricted Priestley duality between $\CA$ and $\CX$. 
\begin{enumerate}[ \normalfont(1)]

\item The underlying Priestley space of the coproduct of $\X_1$ and $\X_2$ in $\CX$ is isomorphic to the disjoint union of their underlying Priestley spaces; more precisely, $(\X_1\sqcup \X_2)^\flat \cong \X_1^\flat \dotcup \X_2^\flat$ via the natural map from $\X_1^\flat \dotcup \X_2^\flat$ to $(\X_1\sqcup \X_2)^\flat$. 

\item Let $\phi_1\colon \X_1 \to \Y$ and $\phi_2\colon \X_2 \to \Y$ be $\CX$-morphisms. Then $\Y$, with the morphisms $\phi_1$ and $\phi_2$, is a coproduct of $\X_1$ and $\X_2$ in~$\CX$ provided $\phi_1$ and $\phi_2$ are $\CP$-embeddings and $\Y^\flat = \phi_1^\flat(\X_1) \dotcup \phi_2^\flat(\X_2)$.

\end{enumerate}
\end{lemma}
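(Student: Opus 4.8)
Because $D$ and $E$ constitute a \emph{dual} equivalence, $E$ sends the coproduct $\X_1\sqcup\X_2$ in $\CX$ to a product $E(\X_1)\times E(\X_2)$ in $\CA$ and, conversely, $D$ sends products in $\CA$ to coproducts in $\CX$; similarly $H$ and $K$ interchange products and coproducts between $\CD$ and $\CP$. The whole argument rests on two facts about the base duality: the forgetful functor ${}^\flat\colon\CA\to\CD$ preserves products (products in a variety are computed componentwise, so $(\A\times\B)^\flat=\A^\flat\times\B^\flat$), and a coproduct in $\CP$ is a disjoint union $\X_1^\flat\dotcup\X_2^\flat$ with the two inclusions as coproduct injections, so that $H$ carries a product $\mathbf L_1\times\mathbf L_2$ in $\CD$ to $H(\mathbf L_1)\dotcup H(\mathbf L_2)$.

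For (1), I would compute $(\X_1\sqcup\X_2)^\flat$ using the natural isomorphism ${}^\flat\cong H\circ{}^\flat\circ E$ established in the preceding lemma. Combining this with the two facts above gives
\[
(\X_1\sqcup\X_2)^\flat\cong H\bigl(E(\X_1\sqcup\X_2)^\flat\bigr)\cong H\bigl(E(\X_1)^\flat\times E(\X_2)^\flat\bigr)\cong H(E(\X_1)^\flat)\dotcup H(E(\X_2)^\flat),
\]
and, since $H(E(\X_i)^\flat)=HK(\X_i^\flat)\cong\X_i^\flat$ via the Priestley counit, the right-hand side is $\X_1^\flat\dotcup\X_2^\flat$. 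The one point needing care is that the resulting isomorphism is the \emph{natural} map, i.e.\ the map induced on coproducts by the injections $\iota_i\colon\X_i\to\X_1\sqcup\X_2$; I would secure this by checking at each stage of the chain that the isomorphism intertwines $\iota_i^\flat$ with the inclusion of the $i$-th summand.

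For (2), the plan is to dualise the hypotheses and recognise a product in $\CA$. Applying $E$ and invoking Lemma~\ref{lem:sur-inj}(2), the $\CP$-embeddings $\phi_1,\phi_2$ yield surjections $E(\phi_i)\colon E(\Y)\to E(\X_i)$. The hypothesis $\Y^\flat=\phi_1^\flat(\X_1^\flat)\dotcup\phi_2^\flat(\X_2^\flat)$ says precisely that $\Y^\flat$, with the embeddings $\phi_i^\flat$, is a coproduct of $\X_1^\flat$ and $\X_2^\flat$ in $\CP$; applying $K$ then shows $E(\Y)^\flat=K(\Y^\flat)$ is a product of $E(\X_1)^\flat$ and $E(\X_2)^\flat$ in $\CD$ with projections $E(\phi_i)^\flat$. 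Forming the genuine product $E(\X_1)\times E(\X_2)$ in $\CA$, I would let $\kappa=\langle E(\phi_1),E(\phi_2)\rangle$ be the comparison morphism out of $E(\Y)$; since ${}^\flat$ preserves products, $\kappa^\flat$ is the comparison morphism in $\CD$, which is an isomorphism by the previous sentence, and since ${}^\flat\colon\CA\to\CD$ reflects isomorphisms, $\kappa$ is an isomorphism in $\CA$. Thus $E(\Y)$, with $E(\phi_1)$ and $E(\phi_2)$, is a product of $E(\X_1)$ and $E(\X_2)$ in $\CA$. Transporting back across the dual equivalence, $D$ makes $DE(\Y)$ a coproduct of $DE(\X_1)$ and $DE(\X_2)$ with injections $DE(\phi_i)$, and conjugating by the counit isomorphisms via the naturality identity $DE(\phi_i)\circ\varepsilon_{\X_i}=\varepsilon_\Y\circ\phi_i$ shows that $\Y$ with $\phi_1,\phi_2$ is the desired coproduct. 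I expect no serious obstacle: the work is the bookkeeping of matching the abstract isomorphism of (1) with the named natural map and confirming in (2) that $\kappa^\flat$ is the $\CD$-product comparison, both routine once ${}^\flat$ is known to preserve products and reflect isomorphisms.
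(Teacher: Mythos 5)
Your proposal is correct and follows essentially the same route as the paper: part (1) is the same chain of isomorphisms (your appeal to ${}^\flat\cong H\circ{}^\flat\circ E$ is just the paper's first step $(\X_1\sqcup\X_2)^\flat\cong\bigl(DE(\X_1\sqcup\X_2)\bigr)^\flat$ in packaged form), and part (2) is the paper's argument---recognise the coproduct in $\CP$, apply $K$, transfer along ${}^\flat\circ E=K\circ{}^\flat$, reflect the product back to $\CA$, then apply $D$ and conjugate by the counit. The only difference is that where the paper simply asserts that ${}^\flat\colon\CA\to\CD$ reflects products, you supply the standard justification via the comparison morphism and reflection of isomorphisms, which is a harmless elaboration rather than a different method.
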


\begin{proof}
(1) Using the commuting squares in Figure~\ref{fig:RestPries} along with the fact that $E\colon \CX\to\CA$ sends coproduct to product, ${}^\flat\colon \CA \to \CD$ preserves products, and $H\colon \CD \to \CP$ sends products to disjoint unions, we have:
\begin{multline*}
(\X\sqcup \Y)^\flat \cong \big(D(E(\X\sqcup \Y))\big)^\flat \cong H\big((E(\X)\times E(\Y))^\flat\big) \cong H\big(E(\X)^\flat\times E(\Y)^\flat\big)\\
\cong H(E(\X)^\flat) \dotcup H(E(\Y)^\flat)\cong H(K(\X^\flat)) \dotcup H(K(\Y^\flat))\cong \X^\flat \dotcup \Y^\flat.
\end{multline*}

(2) Assume that $\phi_1$ and $\phi_2$ are $\CP$-embeddings and that $\Y^\flat = \phi_1^\flat(\X_1) \dotcup \phi_2^\flat(\X_2)$. Then $K(\Y^\flat)$, with the morphisms $K(\phi_1^\flat)$ and $K(\phi_2^\flat)$, is a product of $K(\X_1^\flat)$ and $K(\X_2^\flat)$ in~$\CD$. Thus, since $^\flat\circ E = K\circ {}^\flat$, it follows that $E(\Y)^\flat$, with the morphisms $E(\phi_1)^\flat$ and $E(\phi_2)^\flat$, is a product of $E(\X_1)^\flat$ and $E(\X_2)^\flat$ in~$\CD$. Since $^\flat \colon \CA \to \CD$ reflects products, we conclude that $E(\Y)$, with the morphisms $E(\phi_1)$ and $E(\phi_2)$, is a product of $E(\X_1)$ and $E(\X_2)$ in~$\CA$, and so $DE(\Y)$, with the morphisms $DE(\phi_1)$ and $DE(\phi_2)$, is a coproduct of $DE(\X_1)$ and $DE(\X_2)$ in~$\CX$. As $\varepsilon\colon \id\CX\to DE$ is a natural isomorphism, it follows that $\Y$, with the morphisms $\phi_1$ and $\phi_2$, is a coproduct of $\X_1$ and $\X_2$ in~$\CX$.
\end{proof}

We will be using the fact that embeddings into products in $\CA$ are encoded via jointly $\CP$-surjective morphisms in~$\CX$.

\begin{lemma}\label{lem:joint}
Let $\langle D, E, e, \varepsilon\rangle$ be a restricted Priestley duality between $\CA$ and $\CX$.
A homomorphism $u \colon \B \to \A_1 \times \A_2$ in $\CA$ is an embedding if and only if the two morphisms $D(\pi_i \circ u) \colon D(\B_i) \to D(\A)$ in $\CX$, for $i \in \{1, 2\}$, are jointly $\CP$-surjective, where $\pi_i \colon \A_1 \times \A_2 \to \A_i$ denotes the $i$th projection.
\end{lemma}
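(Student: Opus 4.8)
The plan is to reduce the claimed equivalence to a single $\CP$-surjectivity statement and then to decode that statement using the description of coproducts in $\CX$. Write $u_i := \pi_i\circ u\colon \B\to\A_i$, so that the two morphisms in question are $D(u_i)\colon D(\A_i)\to D(\B)$. By Lemma~\ref{lem:sur-inj}(1), $u$ is an embedding if and only if $D(u)\colon D(\A_1\times\A_2)\to D(\B)$ is $\CP$-surjective, so it suffices to show that $D(u)$ is $\CP$-surjective precisely when $D(u_1)$ and $D(u_2)$ are jointly $\CP$-surjective, i.e.\ when the images of $D(u_1)^\flat$ and $D(u_2)^\flat$ together cover $D(\B)^\flat$. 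Since $D$ is a dual equivalence, it carries the product $\A_1\times\A_2$ with its projections $\pi_1,\pi_2$ to a coproduct of $D(\A_1)$ and $D(\A_2)$ in $\CX$ with coproduct injections $D(\pi_1),D(\pi_2)$; and contravariance of $D$ gives $D(u_i)=D(u)\circ D(\pi_i)$.

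The key step is to pass to underlying Priestley spaces. By Lemma~\ref{lem:coprod}(1) the natural map identifies $D(\A_1\times\A_2)^\flat$ with the disjoint union $D(\A_1)^\flat\dotcup D(\A_2)^\flat$, and under this identification each $D(\pi_i)^\flat$ becomes the inclusion of the $i$th summand. (Concretely this is just the standard fact that $H$ sends the product $\A_1^\flat\times\A_2^\flat$ with its projections to the disjoint union $H(\A_1^\flat)\dotcup H(\A_2^\flat)$ with its summand inclusions, transported along the identity $D(\A)^\flat=H(\A^\flat)$ from condition~(3) of Definition~\ref{def:restPr}.) Applying ${}^\flat$ to $D(u_i)=D(u)\circ D(\pi_i)$ yields $D(u_i)^\flat=D(u)^\flat\circ D(\pi_i)^\flat$, so the image of $D(u_i)^\flat$ is exactly the restriction of $D(u)^\flat$ to the $i$th summand. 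Hence the union of the images of $D(u_1)^\flat$ and $D(u_2)^\flat$ equals the image of $D(u)^\flat$ on all of $D(\A_1)^\flat\dotcup D(\A_2)^\flat$.

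From this the equivalence is immediate: the two maps $D(u_1)^\flat,D(u_2)^\flat$ jointly cover $D(\B)^\flat$ if and only if $D(u)^\flat$ is onto $D(\B)^\flat$, that is, if and only if $D(u)$ is $\CP$-surjective; combined with the reduction above via Lemma~\ref{lem:sur-inj}(1), this is equivalent to $u$ being an embedding. The one point deserving care---and the step I would check most carefully---is the identification in the second paragraph that the coproduct injections $D(\pi_i)$ have, at the level of ${}^\flat$, the set-theoretic summand inclusions as their underlying maps; this is where the compatibility between the abstract coproduct structure of $D(\A_1\times\A_2)$ and the concrete disjoint-union structure of its underlying space (both supplied by Lemma~\ref{lem:coprod}(1)) must be used. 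Once that is pinned down, the remainder is a routine chase through the definitions of $\CP$-surjectivity and joint $\CP$-surjectivity.
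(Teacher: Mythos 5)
Your proposal is correct and follows essentially the same route as the paper's proof: reduce via Lemma~\ref{lem:sur-inj}(1) to the $\CP$-surjectivity of $D(u)$, then use the factorisation $D(u_i)^\flat = D(u)^\flat \circ D(\pi_i)^\flat$ together with the standard Priestley-duality fact that $H(\pi_1^\flat)\dotcup H(\pi_2^\flat)$ is an isomorphism onto $H(\A_1^\flat\times\A_2^\flat)$. The identification you flag as the delicate step is exactly the one the paper supplies (at the $H$-level, via $D(\cdot)^\flat = H(\cdot^\flat)$ from the commuting squares), so your detour through Lemma~\ref{lem:coprod}(1) is a harmless repackaging of the same fact.
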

\begin{proof}
The map $H(\pi_1^\flat) \dotcup H(\pi_2^\flat) \colon H(\A_1^\flat) \dotcup H(\A_2^\flat) \to H(\A_1^\flat \times \A_2^\flat)$ is an isomorphism. Let $u_i := \pi_i \circ u \colon \B \to \A_i$. Then $H(u_i^\flat) = H(\pi_i^\flat \circ u^\flat) = H(u^\flat) \circ H(\pi_i^\flat)$. 
So the diagram in Figure~\ref{fig:jointsurj} commutes.
\begin{figure}[ht]
\begin{center}
\begin{tikzpicture}
  \node[rectangle] (0) at (0,0) {$H(\A_1^\flat) \dotcup H(\A_2^\flat)$};
  \node[rectangle] (1) at (0,1.75) {$H(\A_1^\flat \times \A_2^\flat)$};
  \node[rectangle] (2) at (3,1.75) {$H(\B^\flat)$};
  \path (0) edge [>->>] node {\small $H(\pi_1^\flat) \dotcup H(\pi_2^\flat)$} (1);
  \path (0) edge [->,anchor=north west] node[yshift=0.1cm] {\small $H(u_1^\flat) \dotcup H(u_2^\flat)$} (2);
  \path (1) edge [->] node {\small $H(u^\flat)$} (2);
\end{tikzpicture}
\end{center}
\caption{Diagram for the proof of Lemma~\ref{lem:joint}}\label{fig:jointsurj}
\end{figure}

Thus $H(u^\flat)$ is surjective if and only if $H(u_1^\flat)$ and $H(u_2^\flat)$ are jointly surjective. Consequently, by Lemma~\ref{lem:sur-inj}(1), 
\begin{align*}
&u \colon \B \to \A_1 \times \A_2 \text{ is an embedding}\\
\iff {}&D(u)\colon D(\A_1 \times \A_2) \to D(\B) \text{ is $\CP$-surjective}\\
\iff{}&D(u)^\flat\colon D(\A_1 \times \A_2)^\flat \to D(\B)^\flat \text{ is surjective}\\
\iff{}&H(u^\flat)\colon H(\A_1^\flat \times \A_2^\flat) \to H(\B^\flat) \text{ is surjective}\\
\iff{}&H(u_1^\flat) \colon H(\A_1^\flat) \to H(\B^\flat),\ H(u_2^\flat) \colon H(\A_2^\flat) \to H(\B^\flat) \text{ are jointly surjective}\\
\iff{}&D(u_1) \colon D(\A_1) \to D(\B),\ D(u_2) \colon D(\A_2) \to D(\B) \text{ are jointly $\CP$-surjective,}
\end{align*}
as required.
\end{proof}

The following result will play a vital role in our considerations. Parts~(1) and~(2) have been proved with $\A_1 = \A_2$ in a range of restricted Priestley dualities; see \cite[3.3]{DP93}, \cite[3.5]{DP96}, \cite[p.\ 218]{CD98}, \cite[pp.\ 222--223]{DH04} and~\cite[4.1]{OckAlg}. 

\begin{theorem}\label{lem:subalgofsq}
Let $\langle D, E, e, \varepsilon\rangle$ be a restricted Priestley duality between $\CA$ and $\CX$. Let $\X_1, \X_2,\Y\in \CX$ and define $\A_1 :=E(\X_1)$ and $\A_2 :=E(\X_2)$. 
\begin{enumerate}[ \normalfont(1)]

\item
Let $\phi_1\colon \X_1 \to \Y$, $\phi_2 \colon \X_2\rightarrow\Y$ be jointly $\CP$-surjective morphisms. Then the set 
\[
B(\phi_1, \phi_2) := \{\, (\alpha\circ \phi_1^\flat, \alpha\circ \phi_2^\flat) \mid \alpha \in\CP(\Y^\flat,\TwT)\,\} 
\subseteq \CP(\X_1^\flat,\TwT)\times \CP(\X_2^\flat,\TwT)
\]
forms a substructure of $\A_1\times \A_2$. The corresponding subalgebra $\B(\phi_1, \phi_2)$ of $\A_1\times \A_2$ is isomorphic to $E(\Y)$: the map  $\alpha\mapsto (\alpha\circ \phi_1^\flat, \alpha\circ \phi_2^\flat)$, for all $\alpha \in\CP(\Y^\flat,\TwT)$, is an isomorphism from $E(\Y)$ to $\B(\phi_1, \phi_2)$.

\item
Let $\B$ be a subalgebra of $\A_1\times \A_2$ and let $\Y := D(\B)$. 
For $i \in \{1,2\}$, define morphisms $\varphi_i \colon \X_i \to \Y$ by ${\varphi_i := D(\rho_i) \circ \varepsilon_{\X_i}}$, where $\rho_i\colon\B\to\A_i$ is the projection. Then $\phi_1$ and $\phi_2$ are jointly $\CP$-surjective and satisfy $B = B(\phi_1, \phi_2)$.

\item 
Let $\phi_1\colon \X_1 \to \Y$, $\phi_2 \colon \X_2\rightarrow\Y$ be  jointly $\CP$-surjective morphisms. Then $B(\phi_1, \phi_2) = B_1\times B_2$, for some subalgebras $\B_1$ and $\B_2$ of $\A_1$ and $\A_2$, respectively, if and only if $\Y^\flat = \phi_1^\flat(\X_1^\flat) \dotcup \phi_2^\flat(\X_2^\flat)$. 
\item 
Let $\phi_1\colon \X_1 \to \Y$, $\phi_2 \colon \X_2\rightarrow\Y$ be  jointly $\CP$-surjective morphisms. The subset $B(\phi_1, \phi_2)$ of $A_1\times A_2$ is the graph of a one-to-one partial map from $A_1$ to $A_2$ if and only if both $\phi_1$ and $\phi_2$ are $\CP$-surjective. Moreover, if $\X_1 = \X_2$, then $B(\phi_1, \phi_2)$  is the graph of an identity map on a subset of $A$ if and only if $\phi_1 = \phi_2$ \textup(where $\A = \A_1 = \A_2$\textup). 

\end{enumerate}
\end{theorem}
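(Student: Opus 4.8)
The plan is to prove both biconditionals by translating the set-theoretic conditions on $B(\phi_1,\phi_2)$ into injectivity statements about the Priestley-dual maps $K(\phi_i^\flat)$, and then to invoke the standard fact of Priestley duality (recalled in the proof of Lemma~\ref{lem:sur-inj}) that, for a $\CP$-morphism $\psi$, the lattice homomorphism $K(\psi)$ is injective if and only if $\psi$ is surjective. Throughout I shall write $m$ for the map $\alpha \mapsto (\alpha\circ\phi_1^\flat, \alpha\circ\phi_2^\flat)$, which by part~(1) is a bijection from $\CP(\Y^\flat,\TwT)$ onto $B(\phi_1,\phi_2)$; since $\alpha\circ\phi_i^\flat = K(\phi_i^\flat)(\alpha)$, the two coordinates of $m$ are precisely the maps $K(\phi_1^\flat)$ and $K(\phi_2^\flat)$.

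First I would handle the first biconditional. The graph of a one-to-one partial map from $A_1$ to $A_2$ is exactly a relation that is both \emph{right-unique} (the first coordinate determines the second) and \emph{left-unique} (the second coordinate determines the first). I claim $B(\phi_1,\phi_2)$ is right-unique if and only if $K(\phi_1^\flat)$ is injective. The implication from injectivity is immediate: if $K(\phi_1^\flat)(\alpha)=K(\phi_1^\flat)(\beta)$ forces $\alpha=\beta$, then equal first coordinates force equal pairs. For the converse I would argue contrapositively: if $K(\phi_1^\flat)$ is not injective, choose $\alpha\ne\beta$ with $\alpha\circ\phi_1^\flat = \beta\circ\phi_1^\flat$; since $m$ is injective by part~(1), we have $m(\alpha)\ne m(\beta)$, so their second coordinates differ, exhibiting two pairs of $B(\phi_1,\phi_2)$ with equal first but distinct second coordinates. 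By the symmetric argument, $B(\phi_1,\phi_2)$ is left-unique if and only if $K(\phi_2^\flat)$ is injective. Combining the two and applying the surjective-versus-injective fact above, $B(\phi_1,\phi_2)$ is the graph of a one-to-one partial map from $A_1$ to $A_2$ if and only if both $\phi_1^\flat$ and $\phi_2^\flat$ are surjective, that is, iff both $\phi_1$ and $\phi_2$ are $\CP$-surjective.

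For the \emph{moreover} part I would take $\X_1=\X_2$, so that $\A_1=\A_2=\A$. A subset of $A\times A$ is the graph of an identity map on some subset of $A$ precisely when it is contained in the diagonal. Now $m(\alpha)=(K(\phi_1^\flat)(\alpha),K(\phi_2^\flat)(\alpha))$ lies on the diagonal for every $\alpha$ if and only if $K(\phi_1^\flat)(\alpha)=K(\phi_2^\flat)(\alpha)$ for all $\alpha\in\CP(\Y^\flat,\TwT)$, that is, if and only if $K(\phi_1^\flat)=K(\phi_2^\flat)$. As $K$ is half of a dual equivalence it is faithful, so this holds iff $\phi_1^\flat=\phi_2^\flat$; and since $^\flat\colon\CX\to\CP$ is faithful (the first lemma of this section), this is in turn equivalent to $\phi_1=\phi_2$, as required.

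The only slightly delicate point is the contrapositive in the right-/left-uniqueness equivalence, where one must use the injectivity of $m$ supplied by part~(1)---equivalently the joint $\CP$-surjectivity of $\phi_1,\phi_2$---to guarantee that distinct $\alpha,\beta$ with matching first coordinates genuinely witness a failure of functionality rather than producing a repeated pair. Everything else is routine bookkeeping with the contravariant functor $K$, together with matching right-uniqueness with $\phi_1$ and left-uniqueness with $\phi_2$ in the correct order.
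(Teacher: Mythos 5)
Your argument for part (4) is correct, and it is essentially the paper's own argument in mildly different packaging: the paper also reduces both biconditionals to the Priestley-duality fact that $K(\psi)$ is injective if and only if $\psi$ is surjective, proving the failure direction by taking $\alpha \ne \beta$ with $\alpha\circ\phi_1^\flat = \beta\circ\phi_1^\flat$, locating a point $y$ where they differ (necessarily outside $\phi_1^\flat(X_1^\flat)$), and using joint surjectivity to write $y = \phi_2^\flat(x)$ --- which is exactly your ``injectivity of $m$'' step unwound into points; the \emph{moreover} clause is likewise handled there via faithfulness of ${}^\flat$ together with a separating map $\alpha\in\CP(\Y^\flat,\TwT)$, which is your appeal to faithfulness of $K$.

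The genuine gap is coverage: the statement has four parts and you prove only the fourth. Part (1) --- that $B(\phi_1,\phi_2)$ is a subuniverse of $\A_1\times\A_2$ as an algebra in $\CA$ (not merely of the bounded-lattice reduct) and that $m$ is a $\CA$-isomorphism from $E(\Y)$ --- is not routine: the paper forms $\phi := \phi_1\sqcup\phi_2\colon \X_1\sqcup\X_2 \to \Y$, uses Lemma~\ref{lem:coprod} and the identity ${}^\flat\circ E = K\circ{}^\flat$ to identify the image of $E(\phi)$ with $B(\phi_1,\phi_2)$, and concludes via Lemma~\ref{lem:sur-inj}(2). Your part-(4) argument even leans on this unproved part (1) to get injectivity of $m$; that particular dependence is harmless only because it can be replaced by the one-line observation that two maps on $\Y^\flat$ agreeing after composition with both $\phi_i^\flat$ agree on $\phi_1^\flat(X_1^\flat)\cup\phi_2^\flat(X_2^\flat) = Y^\flat$, but as written the reliance is circular within an incomplete proof. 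Parts (2) and (3) are untouched: part (2) requires the dual-adjunction identity $\rho_i^\flat = K\bigl(H(\rho_i^\flat)\circ\varepsilon_{\X_i^\flat}\bigr)\circ e_{\B^\flat}$ together with both conditions of Definition~\ref{def:restPr} to verify $B = B(\varphi_1,\varphi_2)$, and part (3) requires, when $\Y^\flat \ne \phi_1^\flat(\X_1^\flat)\dotcup\phi_2^\flat(\X_2^\flat)$, the exhibition of an explicit element of $B_1\times B_2$ outside $B(\phi_1,\phi_2)$, such as $\bigl(\underline{1}\circ\phi_1^\flat,\, \underline{0}\circ\phi_2^\flat\bigr)$ built from comparable points $\phi_1^\flat(x_1)\le\phi_2^\flat(x_2)$. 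As submitted, the proposal establishes one quarter of the theorem.
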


\begin{proof}
(1) Since $\phi_1\colon \X_1 \to \Y$, $\phi_2 \colon \X_2\rightarrow\Y$ are jointly $\CP$-surjective morphisms, and $(\X_1\sqcup \X_2)^\flat \cong \X_1^\flat \dotcup \X_2^\flat$, by Lemma~\ref{lem:coprod}, the map $\phi:= \phi_1\sqcup \phi_2 \colon \X_1\sqcup \X_2 \to \Y$ is a $\CP$-surjective morphism. The image of the $\CA$-homomorphism 
\[
E(\phi) \colon E(\Y) \to E(\X_1\sqcup \X_2) \cong E(\X_1)\times E(\X_2)
\]
equals the image of the underlying $\CD$-homomorphism
\[
E(\phi)^\flat \colon E(\Y)^\flat \to E(\X_1\sqcup \X_2)^\flat \cong E(\X_1)^\flat\times E(\X_2)^\flat,
\]
which, since ${}^\flat \circ E = K\circ {}^\flat$, equals
\[
K(\phi^\flat) \colon K(\Y^\flat) \to K((\X_1\sqcup \X_2)^\flat) \cong K(\X_1^\flat\dotcup \X_2^\flat) \cong K(\X_1^\flat)\times K(\X_2^\flat).
\]
This map is given by $\alpha \mapsto (\alpha\circ \phi_1^\flat, \alpha\circ \phi_2^\flat)$, for all $\alpha \in\CP(\Y^\flat,\TwT)$, whence its image is $B(\phi_1, \phi_2)$. It follows that $B(\phi_1, \phi_2)$ is a substructure of $\A_1\times \A_2$. Since $\phi$ is $\CP$-surjective, the homomorphism $E(\phi)$ is an embedding, by Lemma~\ref{lem:sur-inj}(2), and hence $E(\Y)$ is isomorphic to $\B(\phi_1, \phi_2)$.

(2) 
Let $\B$ be a subalgebra of $\A_1\times \A_2$ and let $\rho_1\colon\B\to\A_1$, $\rho_2\colon\B\to\A_2$ be the two projections. The inclusion map $\rho_1 \sqcap \rho_2 \colon \B \to \A_1\times \A_2$ is an embedding, and therefore the morphisms $D(\rho_1)\colon D(\A_1) \to D(\B)$ and $D(\rho_2)\colon D(\A_2) \to D(\B)$ are jointly $\CP$-surjective, by Lemma~\ref{lem:joint}.

Since $\A_i = E(\X_i)$, we can define morphisms $\varphi_i \colon \X_i \to D(\B)$ by ${\varphi_i := D(\rho_i) \circ \varepsilon_{\X_i}}$, for $i \in \{1,2\}$. 
As $\varepsilon_{\X_i} \colon \X_i \to DE(\X_i) =D(\A_i)$ is an isomorphism, for $i\in \{1,2\}$, the morphisms $\varphi_1 \colon \X_1 \to D(\B)$ and $\varphi_2 \colon \X_2 \to D(\B)$ are jointly $\CP$-surjective.

Since $e_{\B^\flat} \colon \B^\flat \to KH(\B^\flat)$ is an isomorphism, we have
\begin{align*}
B &= \bigl\{\, \bigl(\rho_1^\flat(b), \rho_2^\flat(b)\bigr) \bigm| b \in B \,\bigr\}\\
   &= \bigl\{\, \bigl( \rho_1^\flat \circ e^{-1}_{\B^\flat} (\alpha),\,  \rho_2^\flat \circ e^{-1}_{\B^\flat} (\alpha) \bigr) \bigm| \alpha \in KH(\B^\flat) \,\bigr\}.
\end{align*}
It remains to check that $\rho_i^\flat \circ e^{-1}_{\B^\flat} (\alpha) = \alpha\circ \varphi_i^\flat $, for each $i \in \{1,2\}$ and $\alpha \in KH(\B^\flat)$. We have $\rho_i^\flat = K(H(\rho_i^\flat) \circ \varepsilon_{\X_i^\flat}) \circ e_{\B^\flat}$, since $\langle H, K, e, \varepsilon \rangle$ is a dual adjunction between $\CD$ and~$\CP$; see~\cite[Figure 1.2]{CD98} or \cite[Exercise 19J(c)]{JoyOfCats}. Thus, using both Condition~(1) and Condition (2) in the definition of a restricted Priestley duality, we have
\begin{multline*}
\rho_i^\flat \circ e^{-1}_{\B^\flat} (\alpha)
 = K(H(\rho_i^\flat) \circ \varepsilon_{\X_i^\flat})(\alpha)
 = \alpha \circ H(\rho_i^\flat) \circ \varepsilon_{\X_i^\flat}\\
 = \alpha \circ D(\rho_i)^\flat \circ \varepsilon_{\X_i}^\flat
 = \alpha \circ (D(\rho_i) \circ \varepsilon_{\X_i})^\flat
 = \alpha \circ \varphi_i^\flat,
\end{multline*}
as required.

(3) 
Let $\phi_1\colon \X_1 \to \Y$, $\phi_2 \colon \X_2\rightarrow\Y$ be  jointly $\CP$-surjective morphisms. First, assume that $\Y^\flat = \phi_1^\flat(\X_1^\flat) \dotcup \phi_2^\flat(\X_2^\flat)$. We have
\[
\CP(\Y^\flat, \TwT) = \big\{ \alpha_1\dotcup \alpha_2 \mid  \alpha_1\in\CP(\phi_1^\flat(\X_1^\flat), \TwT) \And  \alpha_2\in\CP(\phi_2^\flat(\X_2^\flat), \TwT)\big\}.
\]
Hence, by (1), the map $\alpha_1\dotcup \alpha_2\mapsto (\alpha_1\circ \phi_1^\flat, \alpha_2\circ \phi_2^\flat)$, for all $\alpha_1 \in\CP(\phi_1^\flat(\X_1^\flat),\TwT)$ and $\alpha_2 \in\CP(\phi_2^\flat(\X_2^\flat),\TwT)$, is an isomorphism from $E(\Y)$ to $\B(\phi_1, \phi_2)$. Thus, $B(\phi_1, \phi_2) = B(\phi_1) \times B(\phi_2)$, where
\[
B(\phi_i) = \big\{\alpha \circ \phi_i^\flat \mid \alpha \in\CP(\phi_i^\flat(\X_i^\flat),\TwT)\big\},
\] 
for $i\in \{1, 2\}$. Since $\Y^\flat = \phi_1^\flat(\X_1^\flat) \dotcup \phi_2^\flat(\X_2^\flat)$, we have $B(\phi_i) = \pi_i\big(B(\phi_1, \phi_2)\big)$, whence $\B(\phi_i)$ is a subalgebra of $\A_i$, for $i\in \{1, 2\}$. 

Now assume that $\Y^\flat \ne \phi_1^\flat(\X_1^\flat) \dotcup \phi_2^\flat(\X_2^\flat)$. So there exists $x_1\in X_1^\flat$ and $x_2\in X_2^\flat$ such that $y_1:=\phi_1^\flat(x_1)$ is comparable with $y_2:=\phi_2^\flat(x_2)$, say $y_1 \le y_2$. Define $B_i :=\pi_i\big(B(\phi_1, \phi_2)\big)$, for $i\in \{1, 2\}$. We must show that $B(\phi_1, \phi_2)\ne B_1 \times B_2$. Let $\underline 0, \underline 1\in \CP(\Y^\flat, \TwT)$ be the constant maps onto $0$ and $1$, respectively and define $b_1 := \underline 1 \circ \phi_1^\flat$ and $b_2 := \underline 0 \circ \phi_2^\flat$. Then $(b_1, b_2) \in B_1\times B_2$, but $(b_1, b_2) \notin B(\phi_1, \phi_2)$ since $(b_1(x_1), b_2(x_2)) = (1, 0)$, whereas, for all $\alpha\in \CP(\Y^\flat, \TwT)$, we have 
\[
\big((\alpha\circ \phi_1^\flat)(x_1), (\alpha\circ \phi_2^\flat)(x_2)\big) = \big(\alpha(y_1), \alpha(y_2)\big) \ne (1, 0),
\]
since $y_1\le y_2$.

(4) Assume that $\phi_1$ is $\CP$-surjective. We shall show that if $(a, b), (a, c)\in B(\phi_1, \phi_2)$, then $b = c$, that is, $B(\phi_1, \phi_2)$ is the graph of a partial map on $A$. Let $\alpha, \beta\in \CP(\Y,\TwT)$ with 
$\alpha\circ\phi_1^\flat = \beta\circ\phi_1^\flat$. As $\phi_1^\flat$ is surjective, it follows that $\alpha = \beta$; this is just the easy observation that surjective maps are epic. Hence $\alpha\circ\phi_2^\flat = \beta\circ\phi_2^\flat$. By symmetry, if $\phi_2$ is $\CP$-surjective, then $(b, a), (c, a)\in B(\phi_1, \phi_2)$ implies that $b = c$. Hence, if both $\phi_1$ and $\phi_2$ are $\CP$-surjective, then $B(\phi_1, \phi_2)$ is the graph of a one-to-one partial map from $A_1$ to $A_2$.

Suppose now that $\phi_1$ is not $\CP$-surjective, that is, $\phi_1^\flat$ is not surjective. As surjective morphisms correspond to embeddings under Priestley duality, it follows that $K(\phi_1^\flat) \colon K(\Y^\flat) \to K(\X_1^\flat)$ is not one-to-one, that is, there exist $\alpha, \beta\in \CP(\Y^\flat,\TwT)$ with $\alpha\ne \beta$ but 
\[
\alpha\circ\phi_1^\flat = K(\phi_1^\flat)(\alpha) = K(\phi_1^\flat)(\beta)= \beta\circ\phi_1^\flat.
\]
 As $\alpha \ne \beta$, there exists $y\in Y^\flat$ such that $\alpha(y) \ne \beta(y)$. Since $\alpha\circ\phi_1^\flat = \beta\circ\phi_1^\flat$, we must have $y\notin \phi_1^\flat(X_1^\flat)$. Hence there exists $x\in X_2^\flat$ with $\phi_2^\flat(x) = y$, as $\phi_1^\flat$ and $\phi_2^\flat$ are jointly surjective. Thus $\alpha(\phi_2^\flat(x)) \ne \beta(\phi_2^\flat(x))$. Hence, we have $\alpha\circ\phi_1^\flat = \beta\circ\phi_1^\flat$ and $\alpha\circ\phi_2^\flat \ne \beta\circ\phi_2^\flat$, whence $B(\phi_1, \phi_2)$ is not the graph of a partial map from $A_1$ to~$A_2$. By symmetry, it follows that if either $\phi_1^\flat$ or $\phi_2^\flat$ is not surjective, then $B(\phi_1, \phi_2)$ is not the graph of a one-to-one partial map from $A_1$ to $A_2$.

Now assume that $\X_1 = \X_2$ and let $\A = \A_1 = \A_2$. If $\phi_1 = \phi_2$, then of course $B(\phi_1, \phi_2) \subseteq \{\, (a,a)\mid a\in A\,\}$. Assume that $\phi_1 \ne \phi_2$. Then $\phi_1^\flat \ne \phi_2^\flat$ as $^\flat$ is faithful. Hence there exists $x\in X_1^\flat = X_2^\flat$ with $\phi_1^\flat(x)\ne \phi_2^\flat(x)$. Choose $\alpha \in \CP(\Y^\flat,\TwT)$ such that $\alpha(\phi_1^\flat(x))\ne \alpha(\phi_2^\flat(x))$. Then $(\alpha\circ\phi_1^\flat, \alpha\circ\phi_2^\flat)\in B(\phi_1, \phi_2)\comp \{\, (a,a)\mid a\in A\,\}$. Hence $B(\phi_1, \phi_2) \subseteq \{\, (a,a)\mid a\in A\,\}$ if and only if $\phi_1 = \phi_2$.
\end{proof}

\section{Discriminator varieties}\label{sec:DiscVar}

In this section we give a very brief introduction to discriminator varieties, with an emphasis on the finitely generated case. The crucial piece of theory is the characterisation, given in Theorem~\ref{thm:Quasiprimailty}, of finitely generated discriminator varieties. While the result is a completely straightforward generalisation of known results about quasi-primal algebras, it does not appear to have been recorded before. 
Werner's monograph~\cite{W78} gives a detailed treatment of discriminator varieties and quasi-primal algebras. 

We say that a family $\CB $ of algebras \emph{share a common ternary discriminator term} if there is a ternary term $t$ such that $t^\A$ is the ternary discriminator operation on~$\A$, for all $\A\in \CB $.

\begin{facts}\label{facts}
We record some familiar facts about discriminator varieties. 
\begin{enumerate}[ (a)]

\item The variety generated by a quasi-primal algebra is a discriminator variety. 

\item More generally, if $\CB $ is a finite set of finite algebras that share a common ternary discriminator term, then the variety $\Var\CB $ generated by $\CB $ is a discriminator variety, and a non-trivial  algebra $\B\in \Var\CB $ is simple if and only if it is isomorphic to a subalgebra of some $\A\in \CB $.

\item Every finitely generated discriminator variety arises as $\Var\CB $ as described in~(b). 

\item\label{factD} The simple algebras in a discriminator variety form a first-order class. Hence, they either form a proper class or there is a finite bound on their size.

\end{enumerate}
\end{facts}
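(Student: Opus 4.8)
These are standard facts from the structure theory of discriminator varieties, and my plan is to derive them from two ingredients: a single simplicity lemma, and Jónsson's Lemma for finitely generated congruence-distributive varieties (see \cite{W78} and \cite{BS}).

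The cornerstone I would establish first is the observation that \emph{if $t^\A$ is the ternary discriminator on a non-trivial algebra $\A$, then $\A$ is simple}. To see this, suppose $\theta \in \Con\A$ identifies two distinct elements $a \ne b$; then for every $d \in A$ we have $a = t^\A(a, b, d) \mathrel\theta t^\A(a, a, d) = d$, so $\theta$ is the full congruence. Since the ternary discriminator restricts to the ternary discriminator on any subalgebra, this at once shows that every non-trivial member of $\CB$, and every non-trivial subalgebra of a member of $\CB$, is simple. This short argument is really the heart of the matter; everything else is assembly. Note that (a) is just the case $\CB = \{\A\}$ of (b), a quasi-primal algebra being, by definition, one for which the discriminator is a term function (Theorem~\ref{thm:Quasiprimailty}); so it suffices to treat (b), (c) and (d).

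For (b), I would first recall that the discriminator term $t$ furnishes a Pixley term (\cite{W78}), so $\Var\CB$ is arithmetical and in particular congruence-distributive. As $\CB$ is a finite set of finite algebras, each ultraproduct of members of $\CB$ is isomorphic to a member of $\CB$, so Jónsson's Lemma places every subdirectly irreducible algebra of $\Var\CB$ in $\mathsf{HS}(\CB)$. Given such a subdirectly irreducible $\A$, write it as a homomorphic image of a subalgebra $\B$ of some member of $\CB$; by the simplicity observation $\B$ is simple, so $\A$ is trivial or isomorphic to $\B$, and being subdirectly irreducible forces $\A \cong \B$. Hence $t^\A$ is again the ternary discriminator, proving that $\Var\CB$ is a discriminator variety. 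Since a discriminator term function forces simplicity, the subdirectly irreducible and the simple members of $\Var\CB$ coincide, and by the computation just made these are precisely, up to isomorphism, the non-trivial subalgebras of the members of $\CB$ --- which is the asserted description of the simple algebras.

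For (c), given a finitely generated discriminator variety $\CV = \Var\CC$ with $\CC$ finite and consisting of finite algebras, the same congruence-distributivity and Jónsson argument shows that the subdirectly irreducible members of $\CV$ form, up to isomorphism, a finite set $\CB$ of finite simple algebras sharing the discriminator term of $\CV$. Since every algebra in $\CV$ is a subdirect product of subdirectly irreducible members, one gets $\CV \subseteq \mathsf{SP}(\CB) \subseteq \Var\CB \subseteq \CV$, so $\CV = \Var\CB$ as in (b). Finally, for (d), the simplicity observation together with the definition of a discriminator variety shows that within $\CV$ a non-trivial algebra $\A$ is simple exactly when $t^\A$ is the ternary discriminator, a property axiomatised by the first-order sentences $\forall x\,\forall z\; t(x,x,z) = z$ and $\forall x\,\forall y\,\forall z\,(x \ne y \rightarrow t(x,y,z) = x)$ together with $\exists x\,\exists y\; x \ne y$. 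Thus the simple algebras of $\CV$ form an elementary class, and the final dichotomy follows by a routine model-theoretic argument: if arbitrarily large finite simple algebras existed, compactness would yield an infinite one, and any infinite simple algebra yields, by the upward Löwenheim--Skolem theorem, simple algebras of unboundedly large cardinality. The only real obstacle, as noted, is the simplicity observation; the remaining steps are standard applications of Jónsson's Lemma and compactness.
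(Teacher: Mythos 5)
Your proposal is correct, but note that there is no in-paper proof to compare it with: the paper deliberately records these as ``familiar facts'' without argument, pointing the reader to Werner~\cite{W78} and Burris--Sankappanavar~\cite{BS}. Your reconstruction follows the standard route and goes through. The simplicity lemma is indeed the heart: if $a \mathrel{\theta} b$ with $a \ne b$ then $a = t^\A(a,b,d) \mathrel{\theta} t^\A(a,a,d) = d$ for every $d$, and since the discriminator restricts to subalgebras, every non-trivial member of $\mathsf{S}(\CB)$ is simple. Your use of J\'onsson's Lemma is legitimate because the discriminator satisfies the Pixley identities $t(x,y,y) = x$, $t(x,y,x) = x$, $t(x,x,y) = y$, and your collapse of ultraproducts is fine: $\CB$ being finite, any ultrafilter on the index set concentrates on the indices carrying a single $\A \in \CB$, and an ultrapower of a finite algebra is isomorphic to it. The identification in (b) of the simple, the subdirectly irreducible, and (up to isomorphism) the non-trivial subalgebras of members of $\CB$ is exactly right, as is the sandwich $\CV \subseteq \ISP(\CB) \subseteq \Var\CB \subseteq \CV$ in (c), and the axiomatisation-plus-compactness-plus-L\"owenheim--Skolem dichotomy in (d). One presentational point deserves an explicit sentence in (c): there you need congruence-distributivity of an \emph{abstract} discriminator variety $\CV$ before J\'onsson's Lemma applies, and this follows because the Pixley identities, holding on every subdirectly irreducible member by the definition of a discriminator variety, hold throughout $\CV$ (every algebra being a subdirect product of subdirectly irreducibles); in (b) you derived them from $\CB$ directly, so the phrase ``the same congruence-distributivity argument'' papers over a slightly different justification. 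This is a matter of exposition, not a gap.
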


We begin with a theorem that extends several well-known characterisations of quasi-primal algebras to finite sets of algebras. First, we require some definitions.

Let $n\in \mathbb N$ and let $u_A \colon A^n \to A$ and $u_B \colon B^n \to B$. We say that \emph{$(u_A, u_B)$ preserves a relation $r\subseteq A \times B$} if, for all $a_1, \dots, a_n\in A$ and $b_1, \dots, b_n\in B$, 
\[
(a_1, b_1), \dots, (a_n, b_n) \in r \implies (u_A(a_1, \dots, a_n), u_B(b_1, \dots, b_n)) \in r.
\]
For example, if there is an $n$-ary term $t$ such that $u_A = t^\A$ and $u_B = t^\B$ and $r$ is a subuniverse of $\A\times \B$, then $(u_A, u_B)$ preserves $r$. 
By a \emph{partial isomorphism} between $\A$ and $\B$ we mean an isomorphism from a subalgebra of $\A$ to a subalgebra of~$\B$. 

\begin{theorem}
\label{thm:Quasiprimailty}
Let $\CB $ be a finite set of  finite algebras of signature $F$. Then the following are equivalent:
\begin{enumerate}[ \normalfont(1)]
\item
the algebras in $\CB $ are quasi-primal and share a common ternary discriminator term;

\item
for all $n\in \mathbb N$, if $\{u_A \colon A^n \to A \mid \A \in \CB \}$ is a family of $n$-ary operations indexed by $\CB $ such that, for all $\A_1, \A_2\in \CB $, the pair $(u_{A_1}, u_{A_2})$ preserves the graph of every partial isomorphism between $\A_1$ and $\A_2$, then there is a term $t$ in the  signature $F$ such that $u_A = t^\A$, for all $\A \in \CB $.

\item
$\CB $ generates a congruence-distributive and congruence-permutable
variety and, for all $\A\in \CB $,  every non-trivial subalgebra
of $\A$ is simple;
\item
$\CB $ has a majority term and, for all $\A_1, \A_2 \in \CB $, every subalgebra of $\A_1 \times \A_2$ is either
the product of a subalgebra of $\A_1$ and a subalgebra of $\A_2$ or is the graph of a partial isomorphism between $\A_1$ and $\A_2$.
\end{enumerate}
\end{theorem}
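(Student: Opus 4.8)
The plan is to prove the equivalences by running the cycle $(1)\Rightarrow(3)\Rightarrow(4)\Rightarrow(2)\Rightarrow(1)$, so that each implication may use the most convenient hypothesis. The three classical tools I would call on are: the fact that the ternary discriminator satisfies the Pixley identities; Fleischer's description of subdirect products in a congruence-permutable variety; and the Baker--Pixley theorem for algebras carrying a near-unanimity (here, majority) term. The finiteness of $\CB$ and of its members is used both to invoke Baker--Pixley and to ensure that $\Var{\CB}$ is finitely generated; the passage from a single algebra to the family is handled by working in the product algebra $\mathbf{P}:=\prod_{\A\in\CB}\A$, where a family of operations is a common term exactly when the induced componentwise operation is a term operation of $\mathbf{P}$.

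I would dispatch the two ``soft'' directions first. For $(1)\Rightarrow(3)$, a common discriminator term $t$ satisfies $t(x,y,y)\approx t(x,y,x)\approx x$ and $t(x,x,y)\approx y$, so $t$ is a Pixley term and $\Var{\CB}$ is arithmetical, i.e.\ congruence-distributive and congruence-permutable; moreover $t^{\A}$ restricts to the discriminator on every subalgebra $\B\le\A$ with $\A\in\CB$, and any algebra on which the discriminator is a term operation is simple, so every non-trivial subalgebra of each $\A\in\CB$ is simple. For $(2)\Rightarrow(1)$, let $d_A\colon A^3\to A$ be the discriminator on $\A$, for each $\A\in\CB$. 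Since an injective homomorphism commutes with the discriminator, the family $\{d_A\mid\A\in\CB\}$ preserves the graph of every partial isomorphism between members of $\CB$; applying the hypothesis of $(2)$ with $n=3$ yields a term $t$ with $t^{\A}=d_A$ for all $\A\in\CB$, which is precisely $(1)$.

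For $(3)\Rightarrow(4)$, arithmeticity supplies a Pixley term $p$ for $\Var{\CB}$, whence $m(x,y,z):=p(x,p(x,y,z),z)$ is a common majority term, giving the first assertion of $(4)$. For the subalgebra dichotomy, fix $\A_1,\A_2\in\CB$ and $\B\le\A_1\times\A_2$ with projections $\B_i:=\pi_i(\B)\le\A_i$, so $\B$ is a subdirect product of $\B_1$ and $\B_2$. If some $\B_i$ is trivial then $\B=B_1\times B_2$; otherwise $\B_1,\B_2$ are non-trivial, hence simple by $(3)$, and Fleischer's lemma in the congruence-permutable variety $\Var{\CB}$ presents $\B$ through an isomorphism $\B_1/\theta_1\cong\B_2/\theta_2$ with $\theta_i\in\{0,1\}$. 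The case $\theta_1=\theta_2=1$ gives the full product, the case $\theta_1=\theta_2=0$ gives $\B$ as the graph of an isomorphism $\B_1\cong\B_2$, that is, a partial isomorphism between $\A_1$ and $\A_2$, and the mixed cases are excluded by non-triviality; this establishes $(4)$.

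The implication $(4)\Rightarrow(2)$ is the heart of the matter and the step I expect to be the main obstacle, since it is here that the single-algebra Baker--Pixley machinery must be adapted to a finite family. Under the hypotheses of $(2)$ the family $\{u_A\}$ preserves the graph of the identity partial isomorphism on each subalgebra, so each $u_A$ preserves every subuniverse of $\A$ and hence every product subalgebra $B_1\times B_2$; combined with the assumed preservation of graphs of partial isomorphisms and the dichotomy from $(4)$, the family preserves \emph{every} subalgebra of every $\A_1\times\A_2$ with $\A_1,\A_2\in\CB$. To convert this into term-definability I would pass to $\mathbf{P}=\prod_{\A\in\CB}\A$ and the componentwise operation $u$ induced by the family, recalling that $u$ is a term operation of $\mathbf{P}$ exactly when the family is a common term. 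As $\mathbf{P}$ inherits the majority term $m$, Baker--Pixley identifies its term operations with the operations preserving all subalgebras of $\mathbf{P}^2$; and since the majority term forces every subalgebra of a finite product of members of $\CB$ to be $2$-decomposable (determined by its binary projections), preservation of the subalgebras of $\mathbf{P}^2$ reduces to preservation of their binary projections, each of which is a subalgebra of some $\A_i\times\A_j$. The delicate point is exactly this reduction---that a componentwise operation commutes with binary projections and that $2$-decomposability lets local preservation globalise---after which $u$ is a term operation of $\mathbf{P}$ and $(2)$ follows.
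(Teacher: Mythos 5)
Your proof is correct, and its skeleton --- the cycle $(1)\Rightarrow(3)\Rightarrow(4)\Rightarrow(2)\Rightarrow(1)$ with Pixley identities, Fleischer's lemma, and Baker--Pixley --- is exactly the skeleton of the proof the paper relies on; but the paper does not write it out. It simply cites the single-algebra case (Clark--Davey, Theorem 3.3.12) and observes that the only step needing modification for a finite family $\CB$ is $(4)\Rightarrow(2)$, where the NU Duality Theorem is replaced by the \emph{Multisorted} NU Duality Theorem (Clark--Davey, Theorem 7.1.1). Your treatment of that key step is genuinely different: instead of invoking multisorted natural duality theory, you pass to the single finite algebra $\mathbf{P}=\prod_{\A\in\CB}\A$, note that the induced componentwise operation $u$ is a term operation of $\mathbf{P}$ if and only if the family $\{u_A\}$ is a common term, and then apply single-sorted Baker--Pixley to $\mathbf{P}$, using $2$-decomposability of subalgebras of finite products (available from the majority term) to reduce preservation of subuniverses of $\mathbf{P}^2$ to preservation of their binary projections, each of which lies in some $\A_i\times\A_j$ and is handled by the dichotomy in $(4)$ together with the observation that identity maps on subalgebras are partial isomorphisms (so each $u_A$ preserves subuniverses, hence product subalgebras). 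This reduction is sound: the componentwise $u$ commutes with coordinate projections, and $2$-decomposability lets the local memberships globalise, exactly as you flag. What each approach buys: the paper's citation is a one-line proof for readers who have the multisorted duality machinery of Clark--Davey to hand, and it keeps the multisorted viewpoint that is used elsewhere in natural duality theory; your product-algebra trick is elementary and self-contained, avoids multisorted dualities entirely, and makes transparent why finiteness of $\CB$ and of its members is needed (finiteness of $\mathbf{P}$ for Baker--Pixley). If you write it up, state explicitly the two standard facts you lean on --- that for a finite algebra with a $(k{+}1)$-ary near-unanimity term the term operations are exactly the operations preserving all subuniverses of the $k$-th power, and that subalgebras of finite products in such a variety are determined by their $k$-fold projections --- with a reference to Baker--Pixley or Clark--Davey.
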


\begin{proof}
For a cyclic proof of the theorem in the case that $\CB  =\{\A\}$, see~\cite[Theorem 3.3.12]{CD98}. It is very easy to see that the proof given in~\cite{CD98} generalises immediately to a proof of Theorem~\ref{thm:Quasiprimailty}---in the proof of (4) $\Rightarrow$ (2), we use the Multisorted NU Duality Theorem~\cite[Theorem 7.1.1]{CD98} rather than the NU Duality Theorem~\cite[Theorem 2.3.4]{CD98}. 
\end{proof}

The following historical remarks about the conditions in Theorem~\ref{thm:Quasiprimailty} in the case that $\CB  =\{\A\}$ are in order: 
\begin{enumerate}[ (a)]

\item Condition~(2) is Pixley's original definition of a quasi-primal algebra~\cite{P71}, the equivalence of~(1) and~(2) is~\cite[Theorem 3.2]{P71}, and the equivalence of~(2) and~(3) is~\cite[Theorem 5.1]{P70}; 

\item the final characterisation given in~(4) is an easy consequence of the Baker--Pixley Theorem~\cite{BP:nu} characterising finite algebras with a near-unanimity term and seems to have been used first in Davey, Schumann and Werner's paper on quasi-primal Helaus~\cite{DSW:91}. 

\end{enumerate}

The next result is an immediate corollary of the theorem. It can also be proved directly by induction on the size of the set~$\CB $, and has a natural generalisation to the situation where $\CB$ has a $(k{+}1)$-ary near unanimity term: replace $1, 2$ by $1, 2, \dots, k$.

\begin{corollary}\label{cor:qpcor}
Let $\CB $ be a finite set of  finite algebras of signature $F$ and assume that $\CB $ has a majority term. The algebras in $\CB $ share a common ternary discriminator term if and only if $\A_1$ and $\A_2$ share a common ternary discriminator term, for all $\A_1, \A_2\in \CB $.
\end{corollary}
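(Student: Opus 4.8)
The plan is to read the corollary directly off the equivalence of conditions~(1) and~(4) in Theorem~\ref{thm:Quasiprimailty}, exploiting the fact that the subalgebra clause of~(4) refers only to pairs of members of $\CB$, while the one genuinely global ingredient of~(4)---the existence of a majority term---is handed to us as the standing hypothesis. The forward implication needs no work: a single ternary term $t$ that realises the discriminator on every $\A\in\CB$ realises it in particular on each $\A_1$ and each $\A_2$, so any pair shares the common term $t$.

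For the reverse implication I would first observe that if $\A_1$ and $\A_2$ share a common ternary discriminator term, then each of $\A_1,\A_2$ is quasi-primal, the shared term being a witness that the discriminator is a term function. Hence the (at most) two-element family $\{\A_1,\A_2\}$ satisfies condition~(1) of Theorem~\ref{thm:Quasiprimailty}. Applying the equivalence (1)~$\Leftrightarrow$~(4) to this small family, I extract the relevant consequence of~(4): every subalgebra of $\A_1\times\A_2$ is either a product of a subalgebra of $\A_1$ with a subalgebra of $\A_2$ or the graph of a partial isomorphism between $\A_1$ and $\A_2$. Running this over all choices of $\A_1,\A_2\in\CB$ (including the diagonal case $\A_1=\A_2$, which merely re-records quasi-primality of each member) establishes precisely the subalgebra clause of condition~(4) for the whole family $\CB$.

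To conclude, I would combine this clause with the hypothesis that $\CB$ has a majority term: the two together are exactly condition~(4) for $\CB$, so the implication (4)~$\Rightarrow$~(1) of Theorem~\ref{thm:Quasiprimailty} yields that the algebras of $\CB$ share a common ternary discriminator term, as required.

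I do not anticipate a real obstacle; the result is essentially a bookkeeping consequence of the theorem. The single point deserving care---and the reason the majority-term hypothesis cannot be dropped---is that condition~(4) decomposes into a \emph{pairwise} clause (the structure of subalgebras of $\A_1\times\A_2$) and a \emph{global} clause (one majority term common to all of $\CB$). The pairwise clause propagates automatically from pairs to the whole family, but a common majority term does not: each algebra, and each pair, carries \emph{some} majority term, yet these need not agree, which is exactly why a common one is assumed outright. An alternative, self-contained route is induction on $\abs{\CB}$, feeding the pairwise hypothesis through the Baker--Pixley interpolation of condition~(2) to show that a function agreeing with the discriminator on every pair is a term function on all of $\CB$; this is the argument that generalises by replacing the majority (ternary near-unanimity) term with a $(k{+}1)$-ary one, as indicated after the statement.
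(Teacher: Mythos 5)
Your proof is correct and is precisely the argument the paper intends: the paper states the result as an immediate corollary of Theorem~\ref{thm:Quasiprimailty}, read off from the equivalence of conditions (1) and (4), with the pairwise subalgebra clause supplied by the hypothesis on pairs and the global majority term supplied by assumption. Your closing remarks---that the majority term is the only genuinely global ingredient, and that an alternative induction via Baker--Pixley generalises to $(k{+}1)$-ary near-unanimity terms---match the paper's own comments following the statement.
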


The following theorem is an immediate consequence of Theorem~\ref{thm:Quasiprimailty} and Theorem~\ref{lem:subalgofsq}. 

\begin{theorem}\label{thm:qpchar}
Let $\langle D, E, e, \varepsilon\rangle$ be a restricted Priestley duality between $\CA$ and $\CX$.
Let $\CB$ be a finite set of finite algebras from $\CA$ and let $\CY = \{D(\A)\mid \A\in \CB\}$. Then the following are equivalent:

\begin{enumerate}[ \normalfont(1)]

\item the algebras in $\CB$ are quasi-primal and share a common ternary discriminator term;

\item for all $\X_1, \X_2\in \CY$, for each $\Y\in \CX$ and every pair $\phi_1\colon \X_1\to \Y$, $\phi_2\colon \X_2 \to \Y$ of jointly $\CP$-surjective morphisms, we have either $\Y^\flat = \phi_1^\flat(\X_1^\flat) \dotcup \phi_2^\flat(\X_2^\flat)$ or both $\phi_1$ and $\phi_2$ are $\CP$-surjective;

\end{enumerate}
\end{theorem}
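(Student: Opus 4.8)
The plan is to prove the equivalence by translating the two algebraic conditions in Theorem~\ref{thm:Quasiprimailty} into dual conditions via the machinery of Theorem~\ref{lem:subalgofsq}. The key observation is that the algebraic condition (4) of Theorem~\ref{thm:Quasiprimailty}, applied to the finite set $\CB$, asserts that $\CB$ has a majority term and that every subalgebra of $\A_1\times \A_2$ is \emph{either} a product of subalgebras \emph{or} the graph of a partial isomorphism, for all $\A_1, \A_2 \in \CB$. I would first record the standing fact that the bounded-distributive-lattice reduct already furnishes a majority (indeed near-unanimity) term, namely the median term, so the majority-term hypothesis of Theorem~\ref{thm:Quasiprimailty}(4) is automatically satisfied for every $\A \in \CA$. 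Thus condition~(1) here is equivalent, via the equivalence of (1) and (4) in Theorem~\ref{thm:Quasiprimailty}, to the purely structural statement about subalgebras of products of pairs from~$\CB$.

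Next I would dualise that structural statement. Fix $\X_1, \X_2 \in \CY$, so $\A_i = E(\X_i)$ with $\A_i \in \CB$ (up to isomorphism, using that $D$ and $E$ are inverse dual equivalences and the $\A\in\CB$ are finite). By Theorem~\ref{lem:subalgofsq}(2), every subalgebra $\B$ of $\A_1\times\A_2$ arises as $B = B(\phi_1, \phi_2)$ for a suitable pair of jointly $\CP$-surjective morphisms $\phi_1\colon \X_1\to \Y$, $\phi_2\colon \X_2\to \Y$ (with $\Y = D(\B)$), and conversely by Theorem~\ref{lem:subalgofsq}(1) every such jointly $\CP$-surjective pair yields a subalgebra $\B(\phi_1,\phi_2)$ of $\A_1\times\A_2$. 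This gives a bijective correspondence (up to the equivalence of factorisations in Lemma~\ref{lem:factorise}) between subalgebras of $\A_1\times\A_2$ and jointly $\CP$-surjective pairs out of $\X_1, \X_2$. Under this correspondence, Theorem~\ref{lem:subalgofsq}(3) translates ``$\B$ is a product of subalgebras'' into ``$\Y^\flat = \phi_1^\flat(\X_1^\flat)\dotcup \phi_2^\flat(\X_2^\flat)$'', and the first half of Theorem~\ref{lem:subalgofsq}(4) translates ``$\B$ is the graph of a one-to-one partial map'' into ``both $\phi_1$ and $\phi_2$ are $\CP$-surjective''.

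The remaining subtlety is to reconcile ``graph of a one-to-one partial map'' with ``graph of a partial \emph{isomorphism}''. Here I would argue that when $\B = B(\phi_1,\phi_2)$ is a subalgebra of $\A_1\times\A_2$ that is the graph of a one-to-one partial map, that map is automatically a partial isomorphism: its domain is the image of $\B$ under the first projection, hence a subuniverse of $\A_1$, its range is likewise a subuniverse of $\A_2$, and the map itself, being a subalgebra of the product projecting bijectively, is a homomorphism whose inverse is also a homomorphism. Thus ``graph of a one-to-one partial map that is a subalgebra'' coincides with ``graph of a partial isomorphism'', so no information is lost in the translation. Assembling these dictionary entries, condition~(4) of Theorem~\ref{thm:Quasiprimailty} for the pair $(\A_1,\A_2)$ becomes exactly the statement that every jointly $\CP$-surjective pair $\phi_1, \phi_2$ into some $\Y\in\CX$ satisfies $\Y^\flat = \phi_1^\flat(\X_1^\flat)\dotcup\phi_2^\flat(\X_2^\flat)$ or has both $\phi_i$ $\CP$-surjective; quantifying over all $\X_1,\X_2 \in \CY$ yields condition~(2) here.

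The main obstacle I anticipate is the careful bookkeeping in the correspondence between subalgebras and jointly $\CP$-surjective pairs: I must verify that the ``either/or'' in the algebraic condition matches the ``either/or'' in the dual condition on a per-pair basis, and in particular that the two alternatives are dualised by genuinely complementary parts of Theorem~\ref{lem:subalgofsq}(3) and~(4). One should also confirm that passing between $\A_i\in\CB$ and $E(\X_i)$ for $\X_i\in\CY$ respects the finiteness and the isomorphism type, so that quantifying over $\CY$ on the dual side exactly matches quantifying over pairs from $\CB$ on the algebraic side; this is where the faithfulness of ${}^\flat$ and the fact that $\varepsilon$ is a natural isomorphism do the necessary work.
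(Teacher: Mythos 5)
Your proposal is correct and takes essentially the same route as the paper, which states Theorem~\ref{thm:qpchar} as an immediate consequence of Theorem~\ref{thm:Quasiprimailty} and Theorem~\ref{lem:subalgofsq}: condition~(4) of Theorem~\ref{thm:Quasiprimailty} (with the lattice median supplying the majority term) is translated, via parts (1)--(4) of Theorem~\ref{lem:subalgofsq}, into the dual condition~(2). Your additional observations---that a subalgebra of $\A_1\times\A_2$ which is the graph of a one-to-one partial map is automatically the graph of a partial isomorphism, and that the identifications $\A_i \cong E(D(\A_i))$ cause no loss---are exactly the routine details the paper leaves implicit, and they are verified correctly.
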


A finite algebra is \emph{semi-primal} if it has a majority term and every subalgebra of $\A^2$ is either a product of subalgebras of $\A$ or is the graph $\{\, (b,b)\mid b\in B\,\}$ of the identity function on a subalgebra $\B$ of~$\A$. (This is not Foster and Pixley's original definition~\cite{FP64}, but is equivalent to it.) A characterisation of semi-primal algebras in the presence of a restricted Priestley duality is an easy consequence of the previous theorem.

\begin{theorem}\label{thm:spchar}
Let $\langle D, E, e, \varepsilon\rangle$ be a restricted Priestley duality between $\CA$ and $\CX$.
Let $\A$ be a finite algebra in $\CA$ and let $\X = D(\A)$. Then the following are equivalent:

\begin{enumerate}[ \normalfont(1)]

\item $\A$ is semi-primal;

\item 
for each $\Y\in \CX$ and every pair $\phi_1\colon \X\to \Y$, $\phi_2\colon \X \to \Y$ of jointly $\CP$-surjective morphisms, we have either $\Y^\flat = \phi_1^\flat(\X^\flat) \dotcup \phi_2^\flat(\X^\flat)$ or $\phi_1 = \phi_2$.

\end{enumerate}

\end{theorem}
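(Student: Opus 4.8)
The plan is to read Theorem~\ref{thm:spchar} off the dictionary already established in Theorem~\ref{lem:subalgofsq}, taking $\X_1 = \X_2 = \X$ there so that $\A_1 = \A_2 = E(\X)$, which we identify with $\A$ via the isomorphism $e_\A\colon \A\to ED(\A) = E(\X)$. Under this dictionary a jointly $\CP$-surjective pair $\phi_1, \phi_2 \colon \X \to \Y$ corresponds to the subalgebra $\B(\phi_1, \phi_2)$ of $\A^2$; by part~(3) the disjoint-union condition $\Y^\flat = \phi_1^\flat(\X^\flat) \dotcup \phi_2^\flat(\X^\flat)$ says exactly that this subalgebra is a product of subalgebras of $\A$, while by the moreover clause of part~(4) (applicable since $\X_1 = \X_2 = \X$) the equality $\phi_1 = \phi_2$ says exactly that it is the graph of the identity on a subset of $A$. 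Thus the two alternatives in condition~(2) match, term by term, the two alternatives in the definition of semi-primality---except for the majority-term requirement, which the dual condition does not mention and which must be supplied separately.

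For (1)~$\Rightarrow$~(2), I would fix $\Y \in \CX$ and a jointly $\CP$-surjective pair $\phi_1, \phi_2 \colon \X \to \Y$. By Theorem~\ref{lem:subalgofsq}(1) the set $B(\phi_1, \phi_2)$ is the universe of a subalgebra $\B$ of $\A^2$, so semi-primality forces $\B$ to be either a product of subalgebras of $\A$ or the graph of the identity on a subalgebra. In the first case Theorem~\ref{lem:subalgofsq}(3) yields $\Y^\flat = \phi_1^\flat(\X^\flat) \dotcup \phi_2^\flat(\X^\flat)$; in the second case $\B$ is in particular the graph of an identity map on a subset of $A$, so the moreover clause of Theorem~\ref{lem:subalgofsq}(4) gives $\phi_1 = \phi_2$. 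Either way condition~(2) holds.

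For (2)~$\Rightarrow$~(1), the crucial observation---and the one point that needs care, since it supplies the missing majority term---is that condition~(2) is formally stronger than condition~(2) of Theorem~\ref{thm:qpchar} applied to $\CB = \{\A\}$. Indeed, if $\phi_1 = \phi_2$ then, as the pair is jointly $\CP$-surjective, each $\phi_i$ is already $\CP$-surjective, so the alternative ``$\phi_1 = \phi_2$'' implies the weaker alternative ``both $\phi_1$ and $\phi_2$ are $\CP$-surjective''. Hence Theorem~\ref{thm:qpchar} applies and $\A$ is quasi-primal; a majority term for $\A$ is then obtained from the ternary discriminator term $t$, for instance as $t(x, t(x,y,z), z)$. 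It remains to classify an arbitrary subalgebra $\B$ of $\A^2$: using Theorem~\ref{lem:subalgofsq}(2) with $\Y := D(\B)$, we write $B = B(\phi_1, \phi_2)$ for a jointly $\CP$-surjective pair, apply condition~(2), and conclude via Theorem~\ref{lem:subalgofsq}(3) (the disjoint-union alternative gives a product of subalgebras of $\A$) or the moreover clause of Theorem~\ref{lem:subalgofsq}(4) (the alternative $\phi_1 = \phi_2$ gives the graph of the identity on a subalgebra). Together with the majority term this shows that $\A$ is semi-primal. The main obstacle is thus not any computation but the recognition that the dual semi-primality condition automatically forces quasi-primality, which is what makes the majority term available.
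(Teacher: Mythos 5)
Your proposal is correct and follows essentially the same route as the paper, which derives the result from Theorem~\ref{thm:qpchar} together with the ``Moreover'' clause of Theorem~\ref{lem:subalgofsq}(4); your observation that condition~(2) subsumes the quasi-primality condition (since $\phi_1 = \phi_2$ with joint $\CP$-surjectivity forces each $\phi_i$ to be $\CP$-surjective), thereby supplying the majority term, is exactly the point that makes the paper's one-line proof work. Your write-up simply makes explicit the details the paper leaves to the reader, including the standard construction of a majority term from the discriminator term.
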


\begin{proof}
The equivalence follows from the `Moreover' of Theorem~\ref{lem:subalgofsq}(4).
\end{proof}

\section{Distributive double p-algebras}\label{sec:ddp-algebras}

We shall use Theorem~\ref{thm:qpchar} to characterise the finitely generated discriminator varieties of distributive double p-algebras. The result is not new, but the proof is.

An algebra $\A = \langle A; \vee, \wedge, {}^*, {}^+, 0, 1\rangle$ is a \emph{distributive double p-algebra} (ddp-algebra) if $\A^\flat := \langle A; \vee, \wedge, 0, 1\rangle$ is a bounded distributive lattice and ${}^*$ and ${}^+$ are unary operations satisfying
\[
x\wedge y = 0 \iff y\le x^* \quad \text{and} \quad x\vee y = 1 \iff y\ge x^+.
\]
The class $\CA$ of all ddp-algebras is a variety.
The restricted Priestley dual category $\CX$ for $\CA$ was described by Priestley~\cite{Pri75}. Since we are working at the finite level, we do not require the precise description of Priestley spaces dual to a ddp-algebras; it suffices to know that every finite ordered set arises as such a dual. We do require the description at the finite level of the \emph{ddp-space morphisms}, that is, the morphisms dual to homomorphisms between finite ddp-algebras. For each element $x$ of a Priestley space~$\X$, let $\max(x)$ denote the set of maximal elements of $\X$ that dominate $x$, and define $\min(x)$ dually. A map $\phi\colon \X \to \Y$ between finite ordered sets is ddp-space morphism if and only if it is order-preserving and $\phi(\max(x)) = \max(\phi(x))$ and $\phi(\min(x)) = \min(\phi(x))$, for all $x\in X$~\cite{Pri75}. Thus $\CX_{\mathrm{fin}}$, the finite part of $\CX$, consists of all finite ordered sets with the ddp-space morphisms between them. 

We can now state and prove the characterisation of finitely generated discriminator varieties of ddp-algebras. The characterisation follows from results of Sankappanavar~\cite{San85}, who described certain discriminator varieties of ddp-algebras. Recently, the complete characterisation of discriminator varieties of ddp-algebras was completed by Taylor~\cite{Tay16}. Note that not every discriminator variety of ddp-algebras is finitely generated. 

\begin{theorem}
Let $\CB$ be a finite set of finite ddp-algebras and consider the corresponding set $\CY = \{H(\A)\mid \A\in \CB\}$ of finite ordered sets. The following are equivalent:
\begin{enumerate}[ \normalfont(1)]

\item the algebras in $\CB$ are quasi-primal and share a common ternary discriminator term;

\item each $\A\in \CB$ is simple;

\item each $\A\in \CB$ is directly indecomposable and regular, that is, \[
\A\models [a^* = b^* \And a^+ = b^+] \rightarrow a = b; 
\]

\item each $\X\in \CY$ is connected and every element of $\X$ is either maximal or minimal.

\end{enumerate}

\end{theorem}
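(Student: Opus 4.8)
The plan is to prove the cycle by passing to the dual ordered sets and matching each algebraic condition to the two combinatorial properties in~(4): connectedness and the absence of three-element chains (every element maximal or minimal). Throughout I work at the finite level, where the objects of $\CX$ are arbitrary finite ordered sets and the ddp-space morphisms are the order-preserving maps $\phi$ with $\phi(\max(x))=\max(\phi(x))$ and $\phi(\min(x))=\min(\phi(x))$. The logical skeleton is (2)$\Leftrightarrow$(4), (3)$\Leftrightarrow$(4), (1)$\Rightarrow$(2) and (4)$\Rightarrow$(1); since~(4) lies among the conditions, this closes all four.

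First I would pin down the $\CX$-substructures of a finite ddp-space $\X$. Using the finite description of ddp-space morphisms, the inclusion $\Y\hookrightarrow\X$ is a morphism precisely when $\max_\X(y)\subseteq Y$ and $\min_\X(y)\subseteq Y$ for all $y\in Y$, so the $\CX$-substructures are exactly the subsets closed under $\max_\X$ and $\min_\X$; call these $\max/\min$-closed. By Lemma~\ref{lem:Con}, $\A$ is simple iff $\Sub\X$ has only its two trivial members, i.e.\ iff $\X$ has no proper nonempty $\max/\min$-closed subset. I would then verify the purely order-theoretic fact that this holds iff $\X$ is connected with every element maximal or minimal: if some $x$ is neither, then $X\comp\{x\}$ is $\max/\min$-closed (such an $x$ never lies in any $\max_\X(y)$ or $\min_\X(y)$), and if $\X$ is disconnected then any component is $\max/\min$-closed; conversely, when every element is maximal or minimal the $\max/\min$-closed sets are exactly the unions of connected components, so connectedness leaves only $\emptyset$ and $X$. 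This gives (2)$\Leftrightarrow$(4). For (3)$\Leftrightarrow$(4) I would use the finite dual description of the unary operations: identifying the elements of $\A$ with the up-sets of $\X$, one has $a^*\leftrightarrow X\comp{\downarrow}U_a$ and $a^+\leftrightarrow{\uparrow}(X\comp U_a)$. Direct indecomposability corresponds, via Lemma~\ref{lem:coprod}, to $\X$ being non-empty and not a nontrivial disjoint union, i.e.\ connected; and a short computation shows regularity (injectivity of $U\mapsto({\downarrow}U,{\uparrow}(X\comp U))$ on up-sets) holds iff every element of $\X$ is maximal or minimal, since in height at most two ${\downarrow}U$ recovers the maximal elements of $U$ and ${\uparrow}(X\comp U)$ the minimal ones, while a single chain $a<x<b$ lets one delete $x$ from ${\uparrow}x$ without changing either closure.

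The implication (1)$\Rightarrow$(2) is immediate from Theorem~\ref{thm:Quasiprimailty}: condition~(1) yields condition~(3) of that theorem, so every non-trivial subalgebra of each $\A\in\CB$—in particular $\A$ itself—is simple. The substantial step, and the \emph{main obstacle}, is (4)$\Rightarrow$(1), which I would prove by verifying condition~(2) of Theorem~\ref{thm:qpchar}. Let $\X_1,\X_2\in\CY$ and let $\phi_1\colon\X_1\to\Y$, $\phi_2\colon\X_2\to\Y$ be jointly $\CP$-surjective; joint surjectivity forces $\Y$ finite, so I may argue combinatorially. The key observations are: (a) a ddp-space morphism sends maximal elements to maximal and minimal to minimal (apply $\phi(\max(x))=\max(\phi(x))$ to a maximal $x$), so since every element of $\X_i$ is maximal or minimal, $\Y=\phi_1(\X_1)\cup\phi_2(\X_2)$ also has height at most two; (b) each image $\phi_i(\X_i)$ is an $\CX$-substructure of $\Y$ (factor $\phi_i$ as a $\CP$-surjection followed by a $\CP$-embedding via Lemma~\ref{lem:factorise}), hence $\max/\min$-closed, hence a union of connected components of $\Y$ by the order-theoretic fact applied in $\Y$; and (c) each $\phi_i(\X_i)$ is connected, being the order-preserving image of the connected $\X_i$. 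Thus each $\phi_i(\X_i)$ is a single connected component of $\Y$. If the two components differ then $\Y^\flat=\phi_1^\flat(\X_1^\flat)\dotcup\phi_2^\flat(\X_2^\flat)$, the first alternative; otherwise they coincide and, as their union is $\Y$, both equal $\Y$, so $\phi_1$ and $\phi_2$ are $\CP$-surjective, the second alternative. This verifies the dichotomy, giving (4)$\Rightarrow$(1).

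Assembling the pieces, (1)$\Rightarrow$(2)$\Leftrightarrow$(4)$\Rightarrow$(1) shows (1), (2), (4) equivalent, and (3)$\Leftrightarrow$(4) completes the proof. The only genuinely delicate point is the component analysis in (4)$\Rightarrow$(1); the rest is routine once the finite dual descriptions of the $\CX$-substructures and of ${}^*,{}^+$ are in hand.
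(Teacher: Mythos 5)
Your proof is correct, and for the crucial implication it shares the paper's skeleton: both you and the paper prove (4)$\Rightarrow$(1) by verifying condition~(2) of Theorem~\ref{thm:qpchar}, exploiting the fact that ddp-space morphisms send maximal elements to maximal elements and minimal to minimal, so that $\Y$ inherits height at most two. The differences are two. First, where the paper simply cites Davey's 1978 paper for the equivalence of (2), (3) and (4), you prove these equivalences from scratch on the dual side: your identification of the $\CX$-substructures of a finite ddp-space with the $\max/\min$-closed subsets is right, and your decoding of regularity is sound --- for an up-set $U$ one has ${\downarrow}U\cap\max(X)=U\cap\max(X)$ and ${\uparrow}(X\comp U)\cap\min(X)=(X\comp U)\cap\min(X)$, so height two makes $U\mapsto({\downarrow}U,{\uparrow}(X\comp U))$ injective, while your $U={\uparrow}x$ versus ${\uparrow}x\comp\{x\}$ witness does defeat regularity whenever $a<x<b$; this makes your write-up self-contained where the paper is not. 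Second, in (4)$\Rightarrow$(1) the paper argues by a fence induction: assuming $\Y\ne\Y_1\dotcup\Y_2$, it finds comparable elements of $Y_1$ and $Y_2$, deduces that $\Y$ is connected of height at most two, and propagates membership in $Y_1$ along fences using $\phi_1(\max(x))=\max(\phi_1(x))$. You instead note that each image $\phi_i(\X_i)$ is $\max/\min$-closed (this is immediate from the morphism condition itself --- your appeal to Lemma~\ref{lem:factorise} is heavier machinery than needed), that in a height-two ordered set the $\max/\min$-closed subsets are exactly the unions of connected components, and that each image is connected, hence a single component, from which the dichotomy in Theorem~\ref{thm:qpchar}(2) drops out at once. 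The two arguments are mathematically equivalent --- your component lemma is precisely what the paper's fence induction establishes inline --- but yours packages it more structurally and reusably, at the cost of first setting up the substructure characterisation; both treatments equally gloss over the degenerate conventions (trivial algebras, the empty space, whether connectedness entails non-emptiness), which is harmless here.
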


\begin{proof}
The equivalence of (2), (3) and (4) was established by Davey~\cite{BD78}. As every quasi-primal algebra is simple it remains to show that (4) implies (1). 

Assume that (4) holds. By Theorem~\ref{thm:qpchar}, we must show that if $\Y$ is a finite ordered set and $\phi_1\colon \X_1\to \Y$, $\phi_2\colon \X_2\to \Y$ are jointly surjective ddp-space morphisms with $\X_1, \X_2\in \CY$, then either $\Y = \Y_1 \dotcup \Y_2$, where $\Y_i := \phi_i(\X_i)$, for $i = 1, 2$, or both $\phi_1$ and $\phi_2$ are surjective. Let $\phi_1\colon \X_1\to \Y$, $\phi_2\colon \X_2\to \Y$ be  jointly surjective ddp-space morphisms with $\X_1, \X_2\in \CY$ and assume that $\Y \ne \Y_1 \dotcup \Y_2$. As $Y = Y_1 \cup Y_2$, it follows that there exist comparable elements $a$ and $b$ with $a\in Y_1$ and $b\in Y_2$. Since $\X_1$ and $\X_2$ are connected, both $\Y_1$ and $\Y_2$ are connected and consequently $\Y$ is connected. 

By symmetry, it suffices to show that $\phi_1$ is surjective. As $\phi_1$ and $\phi_2$ map maximal elements to maximal elements and similarly for minimal elements, it follows from (4) that every element of $\Y$ is either maximal or minimal. Let $c\in Y_1$ and let $d\in Y$. Using the fact that $\phi_1(\max(x)) = \max(\phi_1(x))$ and $\phi_1(\min(x)) = \min(\phi_1(x))$, a simple induction on the minimum length of a fence from $c$ to $d$ shows that $d\in Y_1$, whence $\phi_1$ is surjective.
\end{proof} 

\begin{corollary}
A finitely generated variety of ddp-algebras is a discriminator variety if and only if it is generated by a finite set of simple algebras. 
\end{corollary}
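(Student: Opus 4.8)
The plan is to read off both implications directly from the preceding theorem together with Facts~\ref{facts}(b) and~(c); no fresh analysis of the ddp-structure is needed, since all the work on jointly surjective ddp-space morphisms has already been absorbed into the equivalence (1)$\Leftrightarrow$(2) of that theorem. Throughout I would read ``a finite set of simple algebras'' as a finite set of finite simple ddp-algebras, which is the only sensible interpretation in the finitely generated setting.

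For the backward implication, I would suppose $\CV = \Var\CB$, where $\CB$ is a finite set of finite simple ddp-algebras. Since each member of $\CB$ is simple, condition~(2) of the preceding theorem holds, so by the implication (2)$\Rightarrow$(1) the algebras in $\CB$ are quasi-primal and share a common ternary discriminator term. Facts~\ref{facts}(b) then guarantees that $\Var\CB$ is a discriminator variety; that is, $\CV$ is a discriminator variety.

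For the forward implication, I would suppose $\CV$ is a finitely generated discriminator variety of ddp-algebras. By Facts~\ref{facts}(c), $\CV = \Var\CB$ for some finite set $\CB$ of finite algebras that share a common ternary discriminator term, as in Facts~\ref{facts}(b); being members of $\CV$, these algebras are ddp-algebras, and sharing a common discriminator term forces each of them to be quasi-primal. Thus condition~(1) of the preceding theorem holds, and by the implication (1)$\Rightarrow$(2) every $\A\in\CB$ is simple. Hence $\CV$ is generated by the finite set $\CB$ of finite simple algebras.

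The only point requiring care is the bookkeeping in the forward direction: one must replace the originally given generating set by the canonical set supplied by Facts~\ref{facts}(c), rather than trying to argue that the original generators are simple, which they need not be. Once this replacement is made, both directions are immediate, so I expect no genuine obstacle here---the substance of the result lives entirely in the equivalence (1)$\Leftrightarrow$(2) of the preceding theorem.
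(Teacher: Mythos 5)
Your proof is correct and is essentially the derivation the paper intends (it leaves the corollary without explicit proof as an immediate consequence of the preceding theorem's equivalence (1)$\Leftrightarrow$(2) together with Facts~\ref{facts}(b) and~(c)). Your observation that in the forward direction one must pass to the canonical generating set from Facts~\ref{facts}(c) --- since the originally given generators need not be simple --- is exactly the right piece of bookkeeping, and your reading of ``simple'' as ``finite simple'' is harmless, being automatic in a finitely generated congruence-distributive variety.
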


\begin{remark}
Given a finite set of finite simple ddp-algebras, an explicit discriminator term for the variety they generate can be obtained from the results of Sankappanavar~\cite[p.\ 413]{San85}. His use of the Heyting implication in his term is justified by a result of Katri{\v{n}}{\'a}k~\cite{Kat73} that every regular ddp-algebra has a term-definable Heyting implication.
\end{remark}

\section{Cornish algebras}\label{sec:Corn}

In this final section, we apply our results to the characterisation of discriminator varieties of Cornish algebras.
As mentioned in the introduction, Cornish algebras are a natural generalisation of Ockham algebras.

More formally, an algebra $\Ok=\langle O; \vee, \wedge, f, 0, 1 \rangle$ is an \emph{Ockham algebra} if its reduct $\Ok^\flat= \langle O; \vee, \wedge, 0, 1\rangle$ is a bounded distributive lattice and $f$ is a dual endomorphism of $\Ok^\flat$. To define a Cornish algebra we must expand the signature by replacing the unary operation $f$ with a set $F$ of unary operations. Let $F=F^+\dotcup F^-$ be a set of unary operation symbols. Then an algebra $\A =\langle A; \vee, \wedge, F^\A, 0, 1\rangle$ is called a \textit{Cornish algebra of type $F$} if 
\begin{enumerate}[\quad \textbullet]
\item
$\Adown :=\langle A; \vee, \wedge, 0, 1\rangle$ is a bounded distributive lattice, and
\item
$F^\A =\{\, f^\A \mid f\in F\,\}$ is a set of unary operations on $A$ such that $f^\A$ is an endomorphism of~$\Adown$, for each $f\in F^+$, and 
$f^\A$ is a dual endomorphism of~$\Adown$, for each $f\in F^-$.
\end{enumerate}
Ockham algebras are the  special case of Cornish algebras in which $F = F^- = \{f\}$. 

Cornish algebras are named after William H. Cornish who introduced them in an invited lecture entitled \emph{Monoids acting on distributive lattices} at the annual meeting of the Australian Mathematical Society at La Trobe University in May 1977. The notes from that lecture were never published but were distributed privately. They first appeared in print as part of Cornish's far-reaching, but often overlooked, monograph~\cite{Corn86} published nine years later. Cornish's monograph contains a wealth of information about Cornish algebras, usually as particular cases of his general theory. Special cases of Cornish's general results have subsequently been published by other authors unaware of the existence of Cornish's work. A detailed analysis of finitely generated varieties of Cornish algebras was given by Priestley~\cite{Pri97} and Priestley and Santos~\cite{PriSan98}.

We begin by describing the restricted Priestley duality for Cornish algebras of type~$F$.
The dual of a Cornish algebra will be a Priestley space equipped with a family of continuous self maps each of which is either order-preserving or order-reversing. The duality was first described by Cornish~\cite{Corn77, Corn86} and follows easily from Urquhart's restricted Priestley duality for Ockham algebras~\cite{Urq79}. 

A topological structure $\X :=\langle X; F^\X, \le, \T \rangle$  is a \textit{Cornish space of type $F$} if
\begin{itemize}
\item
$\X^\flat :=\langle X;\le, \T \rangle$ is a Priestley space, and
\item
$F^\X =\{\, f^\X \mid f\in F\,\}$ is a set of unary operations on $X$ such that $f^\X$ is a continuous order-preserving self-map of~$\X^\flat$, for each $f\in F^+$, and $f^\X$ is a continuous order-reversing self-map of $\X^\flat$, for each $f\in F^-$. 
\end{itemize}
\emph{Ockham spaces} arise in the special case when $F = F^- = \{f\}$. 

Except in the examples at the end, throughout the remainder of this section, we fix a type $F$ and will sometimes say simply Cornish algebra or Cornish space without explicit reference to the type.

We shall denote the categories of Cornish algebras and Cornish spaces of type~$F$ by $\CCF$ and~$\CXF$, respectively. The morphisms of $\CCF$ and $\CXF$ are the natural ones: namely Cornish-algebra homomorphisms, and continuous order-preserving maps that preserve the operations in~$F$, respectively. These categories are dually equivalent, via the contravariant functors $D \colon \CCF \to \CXF$ and $E \colon \CXF \to \CCF$ given on objects in the following definition.

\begin{definition}
Let $c\colon \{0, 1\} \to \{0, 1\}$ be the usual Boolean complementation: $c(0) = 1$ and $c(1) = 0$. For each Cornish algebra~$\A  =\langle A; \vee, \wedge, F^\A, 0, 1\rangle$, define the Cornish space
\[
D(\A) = \langle \CD(\Adown, \TwB); F^{D(\A)}, \le, \T\rangle,
\]
where $\langle \CD(\Adown, \TwB); \le, \T\rangle$ is the Priestley space dual to the underlying bounded distributive lattice~$\Adown$ and, for all $f\in F$, the unary operation $f^{D(\A)}$ is given by 
\[
f^{D(\A)}(x) =\begin{cases}
			 x \circ f^\A, &\text{ if $f\in F^+$,}\\
			 c\circ x \circ f^\A, &\text{ if $f\in F^-$,}
			 \end{cases}
\]
for all $x \colon \Adown \to \TwB$.

For each Cornish space~$\X =\langle X; F^\X, \le, \T \rangle$, define the Cornish algebra
\[
E(\X) = \langle \CP(\X^\flat, \TwT); \vee, \wedge, F^{E(\X)}, 0, 1 \rangle, 
\]
where $\langle \CP(\X^\flat, \TwT); \vee, \wedge, 0, 1 \rangle$ is the bounded distributive lattice dual to the Priestley space~$\X^\flat$, and, for all $f\in F$, the unary operation $f^{E(\X)}$ is given by 
\[
f^{E(\X)}(\alpha) =\begin{cases}
				 \alpha \circ f^\X, &\text{ if $f\in F^+$,} \\
				 c\circ \alpha \circ f^\X, &\text{ if $f\in F^-$,}
				 \end{cases}
\]
for all $\alpha \colon \X^\flat \to \TwT$. 
\end{definition}

The hom-functors $D$ and $E$ are defined on morphisms in exactly the way the functors $H$ and $K$ were defined in Definition~\ref{def:HKMor}. The same is true of the natural transformations $e\colon\id\CCF \to ED$ and $\varepsilon\colon \id\CXF \to DE$.

\begin{theorem}[Restricted Priestley duality for Cornish algebras]
Let $\CCF$ and $\CXF$ be, respectively, the categories of Cornish algebras and Cornish spaces of type $F$. 

\begin{enumerate}[\quad \normalfont (1)]

\item Then $D\colon \CCF \to \CXF$ and $E\colon \CXF \to \CCF$ are well-defined functors that yield a dual category equivalence between $\CCF$ and $\CXF$. 

\item The maps $e_\A \colon \A \to ED(\A)$ and $\varepsilon_\X \colon \X \to DE(\X)$ are isomorphisms, for every Cornish algebra~$\A$ and every Cornish space~$\X$ of type $F$.

\end{enumerate}
\end{theorem}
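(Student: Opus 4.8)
The plan is to build the entire equivalence on top of the Priestley duality between $\CD$ and $\CP$, treating the family $F$ as extra structure that is transported functorially. The decisive observation is that $D(\A)^\flat = H(\A^\flat)$ and $E(\X)^\flat = K(\X^\flat)$ by construction, so the underlying Priestley spaces, the underlying bounded distributive lattices, and the underlying maps of $e_\A$ and $\varepsilon_\X$ are exactly the ones already supplied by Priestley's theorem. Everything therefore reduces to checking that the new unary operations behave correctly, and the only genuinely non-routine ingredient is the interaction of the dual (order-reversing) operations with the Boolean complementation $c$.

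First I would check that $D$ and $E$ are well defined on objects. For $f\in F^+$ the operation $f^{D(\A)}$ is literally $H(f^\A)$, hence a continuous order-preserving self-map of $D(\A)^\flat$, and dually $f^{E(\X)} = K(f^\X)$ is a lattice endomorphism. For $f\in F^-$ one must verify that $x\mapsto c\circ x\circ f^\A$ lands in $\CD(\A^\flat,\TwB)$ and is continuous and order-reversing. This is where $c$ earns its keep: since $c$ swaps $\vee$ with $\wedge$ and $0$ with $1$ on $\TwT$, and $f^\A$ is a dual endomorphism, the composite $c\circ x\circ f^\A$ is again a bounded lattice homomorphism; and since $c$ reverses the order of $\TwT$, postcomposition with $c$ turns the order-preserving assignment $x\mapsto x\circ f^\A$ into the order-reversing map $x\mapsto c\circ x\circ f^\A$. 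The symmetric computation shows that $\alpha\mapsto c\circ\alpha\circ f^\X$ is a dual endomorphism of $K(\X^\flat)$, so $E(\X)$ is a Cornish algebra.

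Next I would verify functoriality, namely that $D$ and $E$ preserve the $F$-operations. A Cornish homomorphism $u\colon\A\to\B$ satisfies $u\circ f^\A = f^\B\circ u$ (or its dual-endomorphism analogue), and a short intertwining computation—again splitting into the $F^+$ and $F^-$ cases, with the $F^-$ case using $c\circ c = \id{\TwT}$—shows that the underlying Priestley morphism $H(u^\flat)$ commutes with the $f$-operations, so $D(u)$ is a $\CXF$-morphism; symmetrically for $E$.

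Finally, and this is the conceptual heart, I would show that $e_\A$ and $\varepsilon_\X$ are $\CCF$- and $\CXF$-isomorphisms. Their underlying maps $e_{\A^\flat}$ and $\varepsilon_{\X^\flat}$ are isomorphisms in $\CD$ and $\CP$ by Priestley duality, so once we know $e_\A$ and $\varepsilon_\X$ preserve the $F$-operations, their set-inverses are again $F$-preserving and have underlying Priestley or lattice inverses, hence are morphisms in the respective categories, making $e_\A$ and $\varepsilon_\X$ isomorphisms. Writing out $f^{ED(\A)}(e_\A(a))$ and evaluating at a point $x\in D(\A)^\flat$ reduces, in the $F^+$ case, to $x(f^\A(a)) = e_\A(f^\A(a))(x)$, while in the $F^-$ case the two applications of $c$ cancel ($c\circ c = \id{\TwT}$) and give the same identity; hence $e_\A$ preserves $F$, and dually for $\varepsilon_\X$. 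Naturality of $e$ and $\varepsilon$ is inherited from the Priestley setting because the underlying-structure functors are faithful, so the naturality squares that commute in $\CP$ and $\CD$ also commute in $\CXF$ and $\CCF$. Together these establish the dual equivalence in~(1) and the isomorphism statement in~(2). The main obstacle throughout is the bookkeeping of the order-reversing $F^-$ case: one must insert $c$ in exactly the right places so that dual endomorphisms correspond to order-reversing maps, and so that the two copies of $c$ cancel precisely when computing the unit and counit.
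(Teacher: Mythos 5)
Your proposal is correct, but there is nothing in the paper to compare it against line-by-line: the paper states this theorem without proof, attributing the duality to Cornish~\cite{Corn77,Corn86} and remarking that it follows easily from Urquhart's restricted Priestley duality for Ockham algebras~\cite{Urq79}. Your direct verification supplies exactly the routine content that the citation elides, and it is the standard argument: object-level well-definedness (with the $F^-$ case hinging on $c$ being a dual endomorphism of the two-element lattice), functoriality via the intertwining identities $u\circ f^{\A}=f^{\B}\circ u$ and $\psi\circ f^{\X}=f^{\Y}\circ\psi$, preservation of the $F$-operations by $e_{\A}$ and $\varepsilon_{\X}$, and transfer of the isomorphism and naturality properties from Priestley duality through the faithful forgetful functors. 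Two cosmetic slips are worth flagging, neither a gap. First, the structure on which $c$ swaps $\vee$ with $\wedge$ and $0$ with $1$ is the lattice $\TwB$, not the topological chain $\TwT$; on $\TwT$ what matters is that $c$ is a continuous order-reversal. Second, you locate the cancellation $c\circ c=\operatorname{id}$ in the functoriality check, but it is not needed there: for $f\in F^-$ one computes $f^{D(\A)}(D(u)(x)) = c\circ x\circ u\circ f^{\A} = c\circ x\circ f^{\B}\circ u = D(u)\bigl(f^{D(\B)}(x)\bigr)$, with a single $c$ passing through untouched. The double-$c$ cancellation genuinely enters only where you later (correctly) use it, in verifying that $e_{\A}$ and $\varepsilon_{\X}$ preserve the $F^-$-operations, e.g.\ $f^{ED(\A)}(e_{\A}(a))(x) = c\bigl(c\bigl(x(f^{\A}(a))\bigr)\bigr) = e_{\A}(f^{\A}(a))(x)$.
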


\begin{remark}
An ordered set $\X$ can have self maps $f^\X$ that are both order-preserving and order-reversing. Of course, the unary operation $f^{E(\X)} := E(f^\X)$ on the corresponding Cornish algebra will change radically depending upon whether $f\in F^+$ or $f\in F^-$.
 The examples in Figure~\ref{tab:mapcolour} illustrates this.
\end{remark}
\begin{figure}[ht]
\begin{tabular}{|c@{\hspace*{2em}}cc|}
  \cline{1-3}
 Ordered set & \multicolumn{2}{c|}{The corresponding algebra $E(\X)$} \\
    \cline{2-3}
  with map $f^\X$& $f\in F^+$ & $f\in F^-$\\
  \hline
\begin{tikzpicture}
   \useasboundingbox (-1,-0.25) rectangle (1,2.4);
   \begin{scope}[yshift=0.5cm]
     \node[unshaded] (0) at (0,0) {};
     \node[unshaded] (1) at (0,1) {};
     \draw[order] (0) to (1);
     \draw[loopy] (1) to [out=205, in=165] (1);
     \draw[curvy] (0) to [bend left] (1);
     \end{scope}
\end{tikzpicture}
&
\begin{tikzpicture}
   \useasboundingbox (-1,-0.25) rectangle (1,2.4);
   \begin{scope}
     \node[unshaded] (2) at (0,0) {};
     \node[unshaded] (3) at (0,1) {};
     \node[unshaded] (4) at (0,2) {};
     \draw[order] (2) to (3);
     \draw[order] (3) to (4);
     \draw[loopy] (2) to [out=205, in=165] (2);
     \draw[loopy] (4) to [out=205, in=165] (4);
     \draw[curvy] (3) to [bend left] (4);
     \end{scope}
\end{tikzpicture}
&
\begin{tikzpicture}
   \useasboundingbox (-1,-0.25) rectangle (1,2.4);
   \begin{scope}
     \node[unshaded] (2) at (0,0) {};
     \node[unshaded] (3) at (0,1) {};
     \node[unshaded] (4) at (0,2) {};
     \draw[order] (2) to (3);
     \draw[order] (3) to (4);
     \draw[curvy, dotted] (3) to [bend left] (2);
     \draw[curvy, dotted, <->] (2) to [bend right=60, min distance=0.7cm] (4);
     \end{scope}
\end{tikzpicture}\\
\hline
\begin{tikzpicture}
   \useasboundingbox (-1,-0.25) rectangle (1,1.75);
   \begin{scope}[xshift=-0.5cm,yshift=0.71cm]
     \node[unshaded] (5) at (0,0) {};
     \node[unshaded] (6) at (1,0) {};
     \draw[curvy] (6) to [bend left] (5);
     \draw[curvy] (5) to [bend left] (6);
     \end{scope}
\end{tikzpicture}
&
\begin{tikzpicture}
  \useasboundingbox (-1,-0.25) rectangle (1,1.75);
   \begin{scope}
     \node[unshaded] (7) at (0,0) {};
     \node[unshaded] (a) at ($(7) + (135:1)$) {};
     \node[unshaded] (b) at ($(7) + (45:1)$) {};
     \node[unshaded] (8) at ($(a) + (45:1)$) {};
     \draw[order] (7) to (a);
     \draw[order] (7) to (b);
     \draw[order] (a) to (8);
     \draw[order] (b) to (8);
     \draw[straight, <->] (a) to (b);
     \draw[loopy] (7) to [out=205, in=165] (7);
     \draw[loopy] (8) to [out=205, in=165] (8);
   \end{scope}
\end{tikzpicture}
&
\begin{tikzpicture}
   \useasboundingbox (-1.5,-0.25) rectangle (1.5,1.75);
   \begin{scope}
     \node[unshaded] (13) at (0,0) {};
     \node[unshaded] (g) at ($(13) + (135:1)$) {};
     \node[unshaded] (h) at ($(13) + (45:1)$) {};
     \node[unshaded] (14) at ($(g) + (45:1)$) {};
     \draw[order] (13) to (g);
     \draw[order] (13) to (h);
     \draw[order] (g) to (14);
     \draw[order] (h) to (14);
     \draw[straight, dotted, <->] (13) to (14);
     \draw[loopy, dotted] (g) to [out=205, in=165] (g);
     \draw[loopy, dotted] (h) to [out=25,in=-15] (h);
   \end{scope}
\end{tikzpicture}\\
\hline
\begin{tikzpicture}
   \useasboundingbox (-1,-0.25) rectangle (1,1.75);
   \begin{scope}[xshift=-0.5cm,yshift=0.71cm]
     \node[unshaded] (9) at (0,0) {};
     \node[unshaded] (10) at (1,0) {};
     \draw[loopy] (9) to [out=115,in=75] (9);
     \draw[loopy] (10) to [out=115,in=75] (10);
   \end{scope}
\end{tikzpicture}
&
\begin{tikzpicture}
   \useasboundingbox (-1.5,-0.25) rectangle (1.5,1.75);
   \begin{scope}
     \node[unshaded] (11) at (0,0) {};
     \node[unshaded] (e) at ($(11) + (135:1)$) {};
     \node[unshaded] (f) at ($(11) + (45:1)$) {};
     \node[unshaded] (12) at ($(e) + (45:1)$) {};
     \draw[order] (11) to (e);
     \draw[order] (11) to (f);
     \draw[order] (e) to (12);
     \draw[order] (f) to (12);
     \draw[loopy] (11) to [out=205, in=165] (11);
     \draw[loopy] (12) to [out=205, in=165] (12);
     \draw[loopy] (e) to [out=205, in=165] (e);
     \draw[loopy] (f) to [out=25,in=-15] (f);
   \end{scope}
\end{tikzpicture}
&
\begin{tikzpicture}
   \useasboundingbox (-1,-0.25) rectangle (1,1.75);
   \begin{scope}
     \node[unshaded] (7) at (0,0) {};
     \node[unshaded] (c) at ($(7) + (135:1)$) {};
     \node[unshaded] (d) at ($(7) + (45:1)$) {};
     \node[unshaded] (8) at ($(c) + (45:1)$) {};
     \draw[order] (7) to (c);
     \draw[order] (7) to (d);
     \draw[order] (c) to (8);
     \draw[order] (d) to (8);
     \draw[straight, dotted, <->] (c) to (d);
     \draw[straight, dotted, <->] (7) to (8);
   \end{scope}
\end{tikzpicture}\\
\hline
\end{tabular}
\caption{$f\in F^+$ versus $f\in F^-$}\label{tab:mapcolour}
\end{figure}
As our first application of the duality, we will now see that $\CCF = \ISP(\C)$ where $\C$ is an algebra of cardinality $2^\kappa$, with $\kappa = \max\{\aleph_0, |F|\}$. Let $\mathcal M$ be the free monoid generated by $F$---concretely, $\mathcal M$ is the set $F^*$ of all finite words in the alphabet $F$, with the empty word $\epsilon$ as identity. Define
\[
\C := \langle \{0, 1\}^\mathcal M; \vee, \wedge, F^\C, 0, 1\rangle,
\]
where $\vee$, $\wedge$, $0$ and $1$ are defined pointwise, and, for all $f\in F$, all $a\in \{0, 1\}^\mathcal M$ and all $w\in \mathcal M$,
\[
f^\C(a)(w) := \begin{cases}
				a(fw), \quad &\text{if $f\in F^+$,}\\
				c(a(fw)), 		&\text{if $f\in F^-$.}
			\end{cases}	
\]
Extend the $+/-$ labelling of $F$ to $\mathcal M$ by defining $\mathcal M^+$ to consist of words containing an even number of symbols from $F^-$ and $\mathcal M^-$ to consist of words containing an odd number of symbols from $F^-$. Then, for each word $v$ in $\mathcal M$, we can define a map $v^\C$ in a way analogous to the definition of $f^\C$; just replace $f\in F^+$ and $f\in F^-$ by $v\in \mathcal M^+$ and $v\in \mathcal M^-$, respectively. Note that the map $v^\C$ is the term function corresponding to $v$ viewed as a unary term and $\epsilon^\C = \id C$.

The following result is a consequence of very general results of Cornish~\cite[8.19.2, 8.20.1]{Corn86}; in his language $\mathcal M$ is a $\pm$-monoid, with $\mathcal M^+$ and $\mathcal M^-$ as just defined. We sketch the details of a direct proof.

\begin{theorem}\label{thm:genM}
The algebra $\C$ defined above is a Cornish algebra of type $F$ and every Cornish algebra of type $F$ embeds into a power of $\C$. Consequently, $\CCF$ is residually small.
\end{theorem}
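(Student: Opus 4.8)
The plan is to prove the three assertions in turn: that $\C$ lies in $\CCF$, that every $\A\in\CCF$ embeds into a power of $\C$, and that residual smallness then follows by a routine argument.

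First I would check that $\C$ is a Cornish algebra of type $F$. Its reduct $\C^\flat$ is the power $\TwB^{\mathcal M}$, hence a bounded distributive lattice. For $f\in F^+$ the operation $f^\C$ is the reindexing $a\mapsto a(fw)$ along $w\mapsto fw$, which visibly preserves the pointwise $\vee,\wedge,0,1$ and so is a lattice endomorphism of $\C^\flat$. For $f\in F^-$ the map $f^\C$ postcomposes this reindexing with the Boolean complement $c$; since $c$ is a dual isomorphism of $\TwB$ and the De Morgan laws hold pointwise, $f^\C$ interchanges $\vee$ with $\wedge$ and $0$ with $1$, i.e.\ is a dual endomorphism. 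Thus $\C\in\CCF$.

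The heart of the proof is the embedding. Fix $\A\in\CCF$ and set $X:=\CD(\Adown,\TwB)$, the carrier of $D(\A)$, equipped with the dual self-maps $f^{D(\A)}$, where $f^{D(\A)}(y)=y\circ f^\A$ for $f\in F^+$ and $f^{D(\A)}(y)=c\circ y\circ f^\A$ for $f\in F^-$. For a word $w=f_1\cdots f_n\in\mathcal M$ write $w^{D(\A)}:=f_1^{D(\A)}\circ\cdots\circ f_n^{D(\A)}$, a self-map of $X$ with $\epsilon^{D(\A)}=\id X$ and $(fw)^{D(\A)}=f^{D(\A)}\circ w^{D(\A)}$. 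For each $x\in X$ I would define $h^x\colon A\to\{0,1\}^{\mathcal M}$ by $h^x(a)(w):=\bigl(w^{D(\A)}(x)\bigr)(a)$, and abbreviate $h^x_w:=w^{D(\A)}(x)\in X$. Since each $h^x_w$ is a bounds-preserving lattice homomorphism into $\TwB$, every coordinate of $h^x$ is such a homomorphism, so $h^x$ is a bounded-lattice homomorphism into $\C^\flat$. For the operations in $F$ I would use $h^x(a)(fw)=h^x_{fw}(a)=\bigl(f^{D(\A)}(h^x_w)\bigr)(a)$: unwinding the definition of $f^{D(\A)}$ shows that $h^x(f^\A(a))=f^\C(h^x(a))$, where for $f\in F^+$ both sides equal $(h^x_w\circ f^\A)(a)$ at coordinate $w$, and for $f\in F^-$ the complement coming from $f^\C$ and the complement coming from $f^{D(\A)}$ cancel, again leaving $(h^x_w\circ f^\A)(a)$. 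Hence each $h^x$ is a $\CCF$-morphism $\A\to\C$. Evaluating at the empty word gives $h^x(a)(\epsilon)=x(a)$, so since homomorphisms into $\TwB$ separate the points of the distributive lattice $\Adown$, the family $\{h^x\mid x\in X\}$ separates the points of $\A$. Therefore $a\mapsto(h^x(a))_{x\in X}$ is an embedding of $\A$ into the power $\C^X$.

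Parts one and two together give $\CCF=\ISP(\C)$, and residual smallness follows from the standard monolith argument: if $\A\in\CCF$ is subdirectly irreducible and $\A\hookrightarrow\C^I$, then the kernels of the composites $\A\to\C^I\to\C$ meet in the trivial congruence; were all these kernels nontrivial they would each contain the monolith of $\A$, contradicting injectivity, so some composite $\A\to\C$ is injective. Thus every subdirectly irreducible Cornish algebra of type $F$ embeds into $\C$ and so has at most $\abs{C}=2^\kappa$ elements, which is precisely residual smallness. I expect the main obstacle to be conceptual rather than computational: the crux is recognising that the coordinates of the target power $\TwB^{\mathcal M}$ should be indexed by the monoid $\mathcal M$ and filled by the iterated dual self-maps $w^{D(\A)}(x)$, after which every verification is short. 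The one genuinely delicate point is the $F^-$ case of the homomorphism property, where one must see that the two complements cancel; in a purely internal formulation this amounts to tracking the parity labelling $\mathcal M=\mathcal M^+\dotcup\mathcal M^-$ together with the order of composition, and it is easy to introduce a stray sign or a word reversal there.
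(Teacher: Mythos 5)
Your proposal is correct and takes essentially the same route as the paper: the paper sets $\A = E(\X)$ and defines $\phi_x(\alpha)(w) := \alpha(w^\X(x))$, which under the evaluation isomorphism $e_\A$ is exactly your $h^x(a)(w) = \bigl(w^{D(\A)}(x)\bigr)(a)$, with point-separation again witnessed at the empty word. Your additional details---the explicit cancellation of the two complements in the $F^-$ case and the monolith argument for residual smallness---simply fill in steps the paper dismisses as ``a simple check'' and ``consequently''.
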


\begin{proof}
It is easy to check that $f^\C$ is an endomorphism of $\C^\flat$ when $f\in F^+$ and is a dual endomorphism of $\C^\flat$ when $f\in F^-$. Hence $\C$ is a Cornish algebra of type~$F$.

Let $\A = E(\X)$ be a Cornish algebra of type $F$. 
To prove that $\A$ embeds into a power of~$\C$, it suffices to show that the homomorphisms from $\A = E(\X)$ to $\C$ separate the points of $\A$.
For each $x\in X$, we shall define a homomorphism $\phi_x \colon \A \to \C$. Recall that the underlying sets of $\A = E(\X)$ and $\C$ are $\CP(\X^\flat, \TwT)$ and $\{0, 1\}^\mathcal M$, respectively. Define $\phi_x \colon \CP(\X^\flat, \TwT)\to \{0, 1\}^\mathcal M$ as follows: 
\[
\phi_x(\alpha)(w) := \alpha(w^\X(x)), \text{ for all $\alpha \in \CP(\X^\flat, \TwT)$ and all $w\in \mathcal M$.}
\]
A simple check shows that $\phi_x$ is a homomorphism. Now let $\alpha, \beta\in \CP(\X^\flat, \TwT)$ with $\alpha \ne \beta$. Thus, there exists $x\in X$ with $\alpha(x)\ne \beta(x)$. Without loss of generality, we may assume that $\alpha(x) = 1$ and $\beta(x) = 0$, and hence
\begin{alignat*}{2}
\phi_x(\alpha)(\epsilon) &= \alpha(\epsilon^\X(x)) = \alpha(x) = 1,\\
\phi_x(\beta)(\epsilon) &=  \beta(\epsilon^\X(x)) = \beta(x) = 0.
\end{alignat*}
Thus, $\phi_x(\alpha) \ne \phi_x(\beta)$, as required. 

As $\CCF = \ISP(\C)$, every subdirectly irreducible algebra in $\CCF$ embeds into $\C$ and consequently $\CCF$ is residually small.
\end{proof}

The next lemma is an immediate consequence of Lemma~\ref{lem:Con}. 
Note that, given a Cornish space~$\X$, the lattice $\Sub\X$, introduced immediately before Lemma~\ref{lem:Con}, is the lattice of topologically closed substructures of $\X$, including the empty substructure. 

\begin{lemma}\label{lem:sub-alg}   
Let $\A=E(\X)$ with $\X$ a Cornish space of type~$\X$. Then the lattice of congruences on $\A$ is dually isomorphic to $\Sub\X$. In particular, $\A$ is simple if and only if $\X$ has no non-empty proper closed substructures.
\end{lemma}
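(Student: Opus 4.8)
The plan is to obtain the lemma as a direct transcription of Lemma~\ref{lem:Con}, the only real work being to replace the dual object $D(\A)$ appearing there by $\X$ itself via the counit isomorphism $\varepsilon_\X$. Concretely, I would set $\A := E(\X)$ and apply Lemma~\ref{lem:Con} to $\A$ (with the restricted Priestley duality $\langle D, E, e, \varepsilon\rangle$ between $\CCF$ and $\CXF$). Since $D(\A) = DE(\X)$, this yields at once that $\Con\A$ is dually isomorphic to the ordered set $\Sub{DE(\X)}$ of closed substructures of $DE(\X)$.

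Next I would transport $\Sub{DE(\X)}$ back to $\Sub\X$ along $\varepsilon_\X$. Because $D$ and $E$ furnish a dual equivalence, the counit $\varepsilon_\X\colon \X \to DE(\X)$ is an isomorphism in $\CXF$, so its underlying Priestley morphism $\varepsilon_\X^\flat$ is an isomorphism of Priestley spaces. I would then check that $\Y \mapsto \varepsilon_\X^\flat(\Y)$ is an order isomorphism from $\Sub\X$ onto $\Sub{DE(\X)}$: a closed substructure $\Y$ of $\X$, witnessed by a $\CP$-embedding $\psi\colon \Z \to \X$, is carried by composition with $\varepsilon_\X$ to a $\CP$-embedding $\varepsilon_\X \circ \psi$ into $DE(\X)$ with image $\varepsilon_\X^\flat(\Y)$, while $\varepsilon_\X^{-1}$ provides the inverse; both assignments evidently preserve inclusion. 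Composing this order isomorphism with the dual isomorphism from the first step gives the asserted dual isomorphism between $\Con\A$ and $\Sub\X$.

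For the `in particular', I would locate the two extreme elements of $\Sub\X$. Tracing the proof of Lemma~\ref{lem:Con}, the identity congruence of $\A$ corresponds to the whole space $\X$ (the top of $\Sub\X$), and the full congruence corresponds to the empty substructure (the bottom of $\Sub\X$, since the trivial quotient of $\A$ is dual to the empty Priestley space). Hence $\A$ is simple, that is, $\Con\A$ is a two-element chain, precisely when $\Sub\X = \{\emptyset, \X\}$ with $\emptyset \ne \X$; equivalently, $\X$ is non-empty and has no non-empty proper closed substructure.

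I do not anticipate any genuine obstacle, as the substance is entirely contained in Lemma~\ref{lem:Con}. The only points needing attention are the routine verification that $\varepsilon_\X$ induces an order isomorphism of substructure lattices, and the bookkeeping of the empty substructure in the simplicity criterion: the boundary case $\X = \emptyset$ produces the trivial one-element algebra $\A$, which is excluded from simplicity, in accordance with the requirement $\emptyset \ne \X$.
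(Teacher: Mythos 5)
Your proposal is correct and takes essentially the same route as the paper, which offers no proof at all and simply declares the lemma an immediate consequence of Lemma~\ref{lem:Con} (noting only that $\Sub\X$ means the lattice of topologically closed substructures of $\X$, including the empty one); your write-up just makes explicit the transport of $\Sub{DE(\X)}$ back to $\Sub\X$ along the counit isomorphism $\varepsilon_\X$ that this implicitly requires. Your attention to the boundary case $\X = \emptyset$ (where $\A$ is trivial, hence not simple) is a harmless and in fact welcome refinement of the paper's unproved ``in particular'' clause.
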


Our first observation is that there are no non-finitely generated discriminator varieties of Cornish algebras of finite type. 

\begin{theorem}\label{thm:nodiscvars}
Let $\CV$ be a discriminator variety of Cornish algebras of type~$F$ with $F$ finite. Then $\CV = \Var\CA$ for some finite set $\CA$ of quasi-primal Cornish algebras of type $F$ that share a common ternary discriminator term.
\end{theorem}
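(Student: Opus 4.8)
The plan is to combine the residual smallness of $\CCF$ established in Theorem~\ref{thm:genM} with the first-order dichotomy for simple algebras recorded in Facts~\ref{facts}\,(d). I would begin by recalling the standard fact that in any discriminator variety the subdirectly irreducible algebras coincide with the simple algebras (see~\cite{BS} or~\cite{W78}), and fixing a ternary term $t$ witnessing that $\CV$ is a discriminator variety, so that $t^\A$ is the ternary discriminator on every simple $\A\in\CV$.

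First I would bound the simple algebras. Since $\CV$ is a subvariety of $\CCF = \ISP(\C)$ and $\C$ has cardinality $2^{\aleph_0}$ (because $F$ is finite, so $\kappa=\aleph_0$ in Theorem~\ref{thm:genM}), every subdirectly irreducible, and hence every simple, algebra of $\CV$ embeds into $\C$ by Theorem~\ref{thm:genM} and so has cardinality at most $2^{\aleph_0}$. Thus the simple algebras of $\CV$ do not form a proper class. By the dichotomy in Facts~\ref{facts}\,(d) there is therefore a finite bound on their size; in particular every simple algebra of $\CV$ is finite. Because $F$ is finite there are, up to isomorphism, only finitely many algebras of type $F$ below this bound, so the simple algebras of $\CV$ fall into finitely many isomorphism classes. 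I would then let $\CA$ be a finite transversal of these classes, noting that each member of $\CA$ is a finite simple algebra lying in $\CV$.

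It remains to verify the three assertions about $\CA$. Each $\A\in\CA$ is simple, so $t^\A$ is the ternary discriminator on $\A$; being finite, $\A$ is therefore quasi-primal, and the whole family $\CA$ shares the common discriminator term $t$. Finally $\Var\CA = \CV$: the inclusion $\Var\CA\subseteq\CV$ is immediate since $\CA\subseteq\CV$, while for the reverse inclusion I would invoke Birkhoff's subdirect representation theorem---every algebra in $\CV$ is a subdirect product of subdirectly irreducible algebras of $\CV$, each of which is simple and hence isomorphic to a member of $\CA$---so that $\CV\subseteq\ISP(\CA)\subseteq\Var\CA$.

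The one genuinely load-bearing step is the passage from residual smallness to a \emph{finite} bound on the size of the simples: it is precisely here that Theorem~\ref{thm:genM}, which supplies only a single cardinal bound, must be upgraded, via the first-order character of the class of simple algebras in Facts~\ref{facts}\,(d), to the much stronger conclusion that the simples are finite and of bounded size. Everything else is routine: the reduction to simple algebras uses only standard discriminator-variety theory, and the finiteness of the number of isomorphism types uses only that $F$ is finite.
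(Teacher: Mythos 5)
Your proof is correct and takes essentially the same route as the paper: residual smallness of $\CCF$ (Theorem~\ref{thm:genM}) combined with the dichotomy of Facts~\ref{facts}(\ref{factD}) gives a finite bound on the size of the simple algebras, finiteness of $F$ then yields finitely many isomorphism types, and the standard facts that the simples in a discriminator variety share the witnessing term and generate the variety complete the argument. Your write-up merely makes explicit the steps the paper compresses---the cardinality bound $2^{\aleph_0}$ via embedding into $\C$, and the Birkhoff subdirect-representation argument for $\CV\subseteq\Var\CA$.
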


\begin{proof}
Since $\CV$ is residually small, Fact~\ref{facts}(\ref{factD}) implies that there is a finite bound on the sizes of the simple algebras in $\CV$, and hence, as $F$ is finite, there are only finitely many of them (up to isomorphism). As the simple algebras in a discriminator variety share a common ternary discriminator term and generate the variety, the result follows.
\end{proof}

As a result of this theorem, the task of characterising discriminator varieties of Cornish algebras of finite type reduces to characterising finite sets of finite Cornish spaces satisfying Condition~(2) of Theorem~\ref{thm:qpchar}.

Since a quasi-primal algebra must be simple, it follows at once from Lemma~\ref{lem:sub-alg} that a necessary condition for a finite Cornish algebra $\A = E(\X)$ to be quasi-primal is that the Cornish space $\X$ has no non-empty proper substructures. Our first result gives a less obvious necessary condition for a Cornish algebra to be quasi-primal, namely that $F^-$ is non-empty. The proof is direct and does not rely upon the restricted Priestley duality for Cornish algebras.

\begin{theorem}\label{thm:ord-pres}
Let $\A$ be a non-trivial finite Cornish algebra of type $F$. If $F^-=\emptyset$, then $\A$ is not quasi-primal.
\end{theorem}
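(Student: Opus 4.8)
The plan is to exploit the fact that when $F^-=\emptyset$ every basic operation of $\A$ is order-preserving with respect to the lattice order $\le$ on $A$, so that \emph{every} term function of $\A$ is order-preserving; since the ternary discriminator $\tau$ is not order-preserving, it cannot be a term function, and hence $\A$ cannot be quasi-primal.

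First I would record why all term functions are monotone. As $F^-=\emptyset$, we have $F=F^+$, so each $f^\A$ is an endomorphism of the bounded lattice $\Adown$ and is therefore order-preserving. The lattice operations $\vee$ and $\wedge$ are order-preserving (as maps $A^2\to A$ for the product order), and the nullary operations $0$ and $1$ are trivially order-preserving. A routine structural induction on terms — using that substituting order-preserving functions into the arguments of an order-preserving function again yields an order-preserving function, and that projections are order-preserving — then shows that $t^\A\colon A^n\to A$ is order-preserving for every $n$-ary term $t$; that is, $\bar a\le\bar b$ in $A^n$ implies $t^\A(\bar a)\le t^\A(\bar b)$.

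It then remains to exhibit a single failure of monotonicity for $\tau$. Since $\A$ is non-trivial, $0\ne 1$ in $\Adown$, so $0<1$. Consider the triples $(0,0,1)$ and $(0,1,1)$, which satisfy $(0,0,1)\le(0,1,1)$ coordinatewise. Computing directly from the definition of the ternary discriminator gives $\tau(0,0,1)=1$ (the first two coordinates agree, so the value is the third) and $\tau(0,1,1)=0$ (the first two coordinates differ, so the value is the first). Since $1\nle 0$, the map $\tau$ is not order-preserving, so by the previous paragraph it is not a term function of $\A$, and therefore $\A$ is not quasi-primal.

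There is no substantial obstacle here. The only step needing attention is the inductive verification that every term function is order-preserving, and this is entirely routine once one observes that each generator of the clone of $\A$ is order-preserving. The real content is the elementary observation that increasing the middle argument of $\tau$ from $0$ to $1$ switches its output from $1$ down to $0$, which is incompatible with monotonicity.
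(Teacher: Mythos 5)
Your proof is correct and follows essentially the same route as the paper: both arguments rest on the observation that $F^-=\emptyset$ forces every term function of $\A$ to be order-preserving, and both derive the contradiction from the discriminator's behaviour on the comparable triples $(a,a,b)\le(a,b,b)$. The paper phrases this via the Mal'cev identities $t^\A(a,a,b)=b$ and $t^\A(a,b,b)=a$ for arbitrary $a\le b$ (concluding the order would be an antichain), while you instantiate the same computation at $a=0$, $b=1$; the difference is purely cosmetic.
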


\begin{proof}
Let $\A = \langle A; \vee, \wedge, 0, 1, F\rangle$ be a non-trivial finite Cornish algebra of type $F$ and assume that $F^-=\emptyset$. Suppose, by way of contradiction, that $\A$ is quasi-primal and let $t$ be a $3$-ary term that yields the ternary discriminator on $\A$. As every fundamental operation of $\A$ is order-preserving, the term function $t^\A$ is order-preserving. Since $t$ is a Mal'cev term on $\A$ (that is $t^\A(a,a,b) = b$ and $t^\A(a,b,b) = a$, for all $a, b\in A$), the order on $\A$ is an antichain; indeed, for all $a, b \in A$, we have
\[
a\le b \implies b = t^\A(a,a,b) \le t^\A(a,b,b) = a.
\]
This is a contradiction since $\Adown$ is a non-trivial lattice. Hence $\A$ is not quasi-primal.
\end{proof}

We can give an external characterisation, in terms of jointly surjective morphisms in the dual category, of finite sets of quasi-primal Cornish algebras of type $F$ that share a common ternary discriminator term.

Given a Cornish-space morphism $\phi\colon \X\to \Y$, we define $\phi(\X)$ to be the substructure of $\Y$ with underlying set~$\phi(X)$.

\begin{theorem}\label{thm:qpcharCorn}
Let $\CB$ be a finite set of finite Cornish algebras of type~$F$ and let $\CY = \{D(\A)\mid \A\in \CB\}$. Then the following are equivalent:

\begin{enumerate}[ \normalfont(1)]

\item the algebras in $\CB$ are quasi-primal and share a common ternary discriminator term;

\item for all $\X_1, \X_2\in \CY$, for each Cornish space $\Y$ and every pair $\phi_1\colon \X_1\to \Y$, $\phi_2\colon \X_2 \to \Y$ of jointly surjective morphisms, we have either $\Y = \Y_1\dotcup \Y_2$, where $\Y_i := \phi_i(\X_i)$, for $i = 1, 2$, or both $\phi_1$ and $\phi_2$ are surjective;

\item each $\X\in \CX$ has no non-empty proper substructures and, for all $\X_1, \X_2\in \CX$, for each Cornish space $\Y$ and every pair $\phi_1\colon \X_1\to \Y$, $\phi_2\colon \X_2 \to \Y$ of jointly surjective morphisms, if $\phi_1(a)$ and $\phi_2(b)$ are comparable, for some $a\in X_1$ and $b\in X_2$, then there exists $c\in X_1$ and $d\in X_2$ with $\phi_1(c) = \phi_2(d)$.
\end{enumerate}
\end{theorem}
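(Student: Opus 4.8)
The plan is to read off the equivalence of (1) and (2) from Theorem~\ref{thm:qpchar}, and then to fold (3) into the equivalence by proving (2) $\Rightarrow$ (3) and (3) $\Rightarrow$ (2). For the first step I would invoke the restricted Priestley duality for Cornish algebras and simply translate the two disjuncts of Condition~(2) of Theorem~\ref{thm:qpchar} into the present language. Since the underlying-Priestley-space functor on $\CXF$ merely forgets the operations in $F$, a Cornish-space morphism is $\CP$-surjective exactly when it is surjective; and the condition $\Y^\flat = \phi_1^\flat(\X_1^\flat) \dotcup \phi_2^\flat(\X_2^\flat)$ says precisely that $Y$ is the disjoint union of the underlying sets of the substructures $\Y_1$ and $\Y_2$ with no comparabilities across the two pieces. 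As the images $\Y_i = \phi_i(\X_i)$ are substructures, hence closed and $F$-invariant, Lemma~\ref{lem:coprod} shows this disjoint union is genuinely the coproduct, i.e.\ $\Y = \Y_1 \dotcup \Y_2$. Thus (1) $\iff$ (2).

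For (2) $\Rightarrow$ (3) I would treat the two clauses of (3) separately. The clause that each $\X \in \CY$ has no non-empty proper substructure follows from (2) via (1): by Theorem~\ref{thm:qpchar} each $\A \in \CB$ is quasi-primal, hence simple, and Lemma~\ref{lem:sub-alg} then says the dual space $\X = D(\A)$ has no non-empty proper closed substructures. The second clause is a direct consequence of (2): suppose $\phi_1(a)$ and $\phi_2(b)$ are comparable for some $a \in X_1$, $b \in X_2$. If $\phi_1(a) = \phi_2(b)$ we may take $c = a$ and $d = b$; otherwise we have a comparability across $\Y_1$ and $\Y_2$, so $\Y \ne \Y_1 \dotcup \Y_2$, and (2) forces both $\phi_1$ and $\phi_2$ to be surjective, whereupon $Y_1 = Y = Y_2$ and any point of $Y$ furnishes the required $c$ and $d$.

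The substantive direction is (3) $\Rightarrow$ (2). Fix $\X_1, \X_2 \in \CY$, a Cornish space $\Y$, and jointly surjective morphisms $\phi_1, \phi_2$; I would split into two cases according to whether some value $\phi_1(a)$ is comparable with some value $\phi_2(b)$. If no such comparability occurs, then in particular $Y_1 \cap Y_2 = \emptyset$ and there are no order relations between $Y_1$ and $Y_2$, so $\Y = \Y_1 \dotcup \Y_2$ and the first disjunct of (2) holds. If some $\phi_1(a)$ is comparable with some $\phi_2(b)$, then by the second clause of (3) there are $c, d$ with $\phi_1(c) = \phi_2(d) =: p$, so $Y_1 \cap Y_2 \ne \emptyset$. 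Here I would use the first clause of (3): the set $\phi_1^{-1}(Y_1 \cap Y_2)$ is a closed, $F$-invariant subset of $X_1$ (preimages of substructures under Cornish-space morphisms are substructures) and is non-empty since it contains $c$; as $\X_1$ has no non-empty proper substructure it must equal $X_1$, so $Y_1 \subseteq Y_2$, and symmetrically $Y_2 \subseteq Y_1$. Hence $Y_1 = Y_2$, and joint surjectivity gives $Y = Y_1 = Y_2$, so both maps are surjective, which is the second disjunct of (2).

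I expect the main obstacle to be Case~1 of (3) $\Rightarrow$ (2): the crux is recognising that $Y_1 \cap Y_2$ is a substructure of $\Y$ and that its preimage under $\phi_1$ is a non-empty substructure of $\X_1$, so that the ``no proper substructure'' clause of (3) can be leveraged to force $Y_1 \subseteq Y_2$. The remaining verifications---that images are closed substructures, that intersections and preimages of substructures are again closed and $F$-invariant, and that a disjoint union with no crossing comparabilities is genuinely the coproduct in $\CXF$---are routine but should be stated carefully, since the whole argument hinges on them.
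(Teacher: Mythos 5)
Your proposal is correct and follows essentially the same route as the paper: (1) $\Leftrightarrow$ (2) read off from Theorem~\ref{thm:qpchar}, (2) $\Rightarrow$ (3) via simplicity of the algebras and Lemma~\ref{lem:sub-alg} plus the observation that a comparability forces $\Y \ne \Y_1 \dotcup \Y_2$, and the same case analysis for (3) $\Rightarrow$ (2). The only (harmless) divergence is the final step of (3) $\Rightarrow$ (2): the paper pushes forward generated substructures, using that $c$ and $d$ generate $\X_1$ and $\X_2$ and that $\phi_i(\sg{\X_i}{x}) = \sg{\Y}{\phi_i(x)}$, to conclude $\phi_1(\X_1) = \phi_2(\X_2)$ directly, whereas you pull the substructure $Y_1 \cap Y_2$ back along $\phi_1$ and $\phi_2$---a dual mechanism of the same strength, resting on the same routine facts that images and preimages of closed $F$-invariant sets are again substructures.
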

\begin{proof}
(1) $\Leftrightarrow$ (2). This follows immediately from Theorem~\ref{thm:qpchar}.

We will now prove that (1) and (2) together imply (3). Assume (1) and (2). By Theorem~\ref{thm:Quasiprimailty}, each $\A\in \CA$ is simple. Hence, by Lemma~\ref{lem:sub-alg}, each $\X\in \CX$ has no non-empty proper substructures. Let $\X_1, \X_2\in \CY$, let $\Y$ be a Cornish space, let $\phi_1\colon \X_1\to \Y$, $\phi_2\colon \X \to \Y$ be  jointly surjective morphisms and define $\Y_i := \phi_i(\X_i)$, for $i = 1, 2$. Assume that $a\in X_1$ and $b\in X_2$ with $\phi_1(a)$ comparable with $\phi_2(b)$. It follows that $\Y \ne \Y_1\dotcup \Y_2$. Thus, by (2), both $\phi_1$ and $\phi_2$ are surjective. Hence there certainly exist $c\in X_1$ and $d\in X_2$ with $\phi_1(c) = \phi_2(d)$.

(3) $\Rightarrow$ (2). Assume (3). Let $\X_1, \X_2\in \CY$, let $\Y$ be a Cornish space, let ${\phi_1\colon \X_1\to \Y}$, $\phi_2\colon \X \to \Y$ be  jointly surjective morphisms and define $\Y_i := \phi_i(\X_i)$, for $i = 1, 2$.  Assume that $\Y \ne \Y_1\dotcup \Y_2$. We must show that both $\phi_1$ and $\phi_2$ are surjective. Since $\Y \ne \Y_1\dotcup \Y_2$, there exist $a\in X_1$ and $b\in X_2$ with $\phi_1(a)$ comparable with~$\phi_2(b)$, and so, by (3), there exists  $c\in X_1$ and $d\in X_2$ with $\phi_1(c) = \phi_2(d)$. Since $\X_1$ and $\X_2$ have no proper substructures, we have $\sg {\X_1} c = \X_1$ and $\sg {\X_2} d = \X_2$, where $\sg \X x$ denotes the substructure of $\X$ generated by~$x$. It is easy to check that $\phi_i(\sg {\X_i} x) = \sg \Y {\phi_i(x)}$, and hence, since $\phi_1(c) = \phi_2(d)$,  
\[
\phi_1(\X_1) = \phi_1(\sg {\X_1} c) = \sg \Y {\phi_1(c)} = \sg \Y {\phi_2(d)} = \phi_2(\sg {\X_2} d) = \phi_2(\X_2).
\]
Since $\phi_1$ and $\phi_2$ are jointly surjective, it follows that 
\[
\Y = \phi_1(\X_1) \cup \phi_2(\X_2) = \phi_1(\X_1) = \phi_2(\X_2), \]
whence both $\phi_1$ and $\phi_2$ are surjective.
\end{proof}

A characterisation of semi-primal Cornish algebras is an easy consequence of the previous theorem.

\begin{theorem}\label{thm:spcharCorn}
Let $\A$ be a  finite Cornish algebra of type $F$ and let $\X = D(\A)$. Then the following are equivalent:

\begin{enumerate}[ \normalfont(1)]

\item $\A$ is semi-primal;

\item 
for each Cornish space $\Y$ and jointly surjective morphisms $\phi_1, \phi_2\colon \X \to \Y$, either $\Y = \Y_1\dotcup \Y_2$, where $\Y_i := \phi_i(\X)$, or $\phi_1 = \phi_2$.

\item $\X$ has no non-empty proper substructures and, for each Cornish space $\Y$ and jointly surjective morphisms $\phi_1, \phi_2\colon \X \to \Y$, if $\phi_1(a)$ and $\phi_2(b)$ are comparable, for some $a, b\in X$, there exists $c\in X$ with $\phi_1(c) = \phi_2(c)$.
\end{enumerate}

\end{theorem}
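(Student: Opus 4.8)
The plan is to mirror the proof of Theorem~\ref{thm:qpcharCorn}, transporting it from the quasi-primal to the semi-primal setting, the only structural change being that the alternative to a disjoint-union decomposition is now the equality $\phi_1=\phi_2$ rather than the surjectivity of both maps. For the equivalence of~(1) and~(2) I would simply invoke Theorem~\ref{thm:spchar} with $\CX=\CXF$: here the underlying-Priestley-space functor is the literal forgetful functor ${}^\flat\colon\CXF\to\CP$, so a Cornish-space morphism is $\CP$-surjective exactly when it is surjective, and the condition $\Y^\flat=\phi_1^\flat(\X^\flat)\dotcup\phi_2^\flat(\X^\flat)$ of Theorem~\ref{thm:spchar}(2) becomes $\Y=\Y_1\dotcup\Y_2$ in the present notation. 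Thus~(1)$\Leftrightarrow$(2) is immediate.

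For the implication (1)\,\&\,(2)$\Rightarrow$(3) the first task is to show that $\X$ has no non-empty proper substructures, which by Lemma~\ref{lem:sub-alg} amounts to showing that a semi-primal $\A$ is simple. This I would read off the definition of semi-primality: a congruence of $\A$ is a reflexive subuniverse of $\A^2$, and reflexivity forces a product $\B_1\times\B_2$ to satisfy $\B_1=\B_2=\A$ and forces the graph of the identity on a subalgebra $\B$ to satisfy $\B=\A$; hence the only congruences are the trivial one and the full one. Granting this, suppose $\phi_1,\phi_2\colon\X\to\Y$ are jointly surjective with $\phi_1(a)$ comparable to $\phi_2(b)$ for some $a,b\in X$. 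Cross-comparability of the images rules out $\Y=\Y_1\dotcup\Y_2$, so by~(2) we get $\phi_1=\phi_2$; in particular $\phi_1(a)=\phi_2(a)$, so $c:=a$ witnesses the conclusion of~(3).

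The substantive direction is (3)$\Rightarrow$(2). Assuming~(3), take jointly surjective $\phi_1,\phi_2\colon\X\to\Y$ with $\Y\ne\Y_1\dotcup\Y_2$ and aim to prove $\phi_1=\phi_2$. Since $Y=\phi_1(X)\cup\phi_2(X)$, the failure of the disjoint-union decomposition yields comparable elements $\phi_1(a)$ and $\phi_2(b)$, and then~(3) provides a single $c\in X$ with $\phi_1(c)=\phi_2(c)$. Because $\X$ has no non-empty proper substructures, $\sg{\X}{c}=\X$; as $\A$ is finite, $\X$ is finite, so this generated substructure is just the monoid orbit $\{\,w^\X(c)\mid w\in\mathcal M\,\}$, with no topological closure needed. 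Since each $\phi_i$ preserves the operations of $F$ we have $\phi_i\circ w^\X=w^\Y\circ\phi_i$ for every $w\in\mathcal M$, and therefore
\[
\phi_1(w^\X(c)) = w^\Y(\phi_1(c)) = w^\Y(\phi_2(c)) = \phi_2(w^\X(c)),
\]
which gives $\phi_1=\phi_2$ on all of $\X$.

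I expect this last propagation step to be the \emph{main obstacle}, and it is precisely where the semi-primal argument diverges from the quasi-primal one: in Theorem~\ref{thm:qpcharCorn} the two maps have distinct domains and one needs only surjectivity, whereas here a single common generator $c$ must force two maps out of the \emph{same} space to coincide globally. The point is clean once one observes that Cornish-space morphisms are exactly the continuous, order-compatible maps preserving the unary operations $F^\X$, so that agreement at a generator propagates along the monoid action; for the finite $\X$ at hand this is immediate, and in general it would follow by continuity, since the orbit of $c$ is dense in $\sg{\X}{c}$.
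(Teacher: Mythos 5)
Your proposal is correct and follows essentially the same route as the paper: (1)$\Leftrightarrow$(2) by specialising Theorem~\ref{thm:spchar} to the Cornish duality where $\CP$-surjectivity is literal surjectivity, (2)$\Rightarrow$(3) via simplicity of semi-primal algebras and Lemma~\ref{lem:sub-alg}, and (3)$\Rightarrow$(2) by observing that $c$ generates $\X$, so $\phi_1(c)=\phi_2(c)$ forces $\phi_1=\phi_2$. Your added details---the explicit congruence argument that semi-primal implies simple, and the propagation of agreement along the monoid orbit $\{\,w^\X(c)\mid w\in\mathcal M\,\}$---merely spell out steps the paper leaves implicit, and are accurate.
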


\begin{proof}
The equivalence of (1) and (2) follows from Theorem~\ref{thm:spchar}, and (2) implies (3) is trivial. Finally, we prove that (3) implies (2). Assume that (3) holds, let $\phi_1, \phi_2\colon \X \to \Y$ be  jointly surjective morphisms, and define $\Y_i := \phi_i(\X)$. Assume that $\Y \ne \Y_1\dotcup \Y_2$. We must prove that $\phi_1 = \phi_2$. As $\Y \ne \Y_1\dotcup \Y_2$, there exist $a,b\in X$ such that $\phi_1(a)$ and $\phi_2(b)$ are comparable. Then (3)~guarantees that there exists $c\in X$ with $\phi_1(c) = \phi_2(c)$. As $\X$ has no non-empty proper substructures, the element $c$ generates $\X$, whence $\phi_1(c) = \phi_2(c)$ implies that $\phi_1 = \phi_2$.  
\end{proof}

The characterisations of quasi- and semi-primality given by Theorems~\ref{thm:qpcharCorn} and~\ref{thm:spcharCorn} are external to the Cornish spaces involved in the theorems as they require us to study pairs of jointly surjective maps defined on those spaces. We now give sufficient conditions that are local to the Cornish spaces.  

Given a Cornish space $\X$ of type $F$, a unary term $t$ in the signature $F$ and $a\in X$, the \emph{orbit of $a$ under $t^\X$} is the sequence
\[
\text{$a(0) :=a$, $a(1) := t^\X(a)$, $a(2):= t^\X(t^\X(a))$, \dots\ }
\]
of iterates of $a$ under $t^\X$. If the orbit is finite, in particular if $\X$ is finite, the orbit must eventually cycle, that is, there exist $m, n\in \mathbb N_0$ with $n < m$ such that $a(0), \dots ,a(m-1)$ are pairwise distinct and $a(m) = a(n)$, in which case $\{a(n), \dots ,a(m-1)\}$ is a cycle of length $m-n$. We say that \emph{the orbit of $a$ under $t^\X$ eventually reaches an odd cycle} if $m-n$ is odd. We require the following result of Davey, Nguyen and Pitkethly~\cite{OckAlg}.

\begin{lemma}[{\cite[Lemma 4.3]{OckAlg}}]\label{lem:antichain}
 Let $g$ be an order-reversing self map of an ordered set $\Y$. Then the union of the odd cycles of $g$ forms an antichain. 
\end{lemma}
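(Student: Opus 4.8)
The plan is to prove the statement in the following direct form: if $a$ and $b$ both lie on odd cycles of $g$ and $a \le b$, then $a = b$. This shows that the union of the odd cycles contains no two distinct comparable elements, i.e.\ that it is an antichain.

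First I would record the elementary parity observation that, since $g$ is order-reversing, the iterate $g^k$ is order-preserving when $k$ is even and order-reversing when $k$ is odd. This follows by an immediate induction: the composite of two order-reversing maps is order-preserving, so $g^2$ is order-preserving, and hence the parity of $k$ governs the monotonicity behaviour of $g^k$.

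The key step is then to exhibit a single \emph{odd} power of $g$ that simultaneously fixes both $a$ and $b$. Let $p$ be the length of the odd cycle on which $a$ lies and $q$ the length of the odd cycle on which $b$ lies, so that $g^p(a) = a$ and $g^q(b) = b$ with both $p$ and $q$ odd. Set $k := pq$. Then $k$ is odd, and since $p \mid k$ and $q \mid k$ we have $g^k(a) = a$ and $g^k(b) = b$. Because $k$ is odd, $g^k$ is order-reversing, so $a \le b$ yields $g^k(a) \ge g^k(b)$, that is $a \ge b$. Combined with $a \le b$, this forces $a = b$, as required. Note that this argument is insensitive to whether $a$ and $b$ lie on the same cycle or on distinct cycles.

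The only point requiring care—and thus the closest thing to an obstacle—is the parity bookkeeping: one must choose a common period of the two odd cycles that is itself odd, so that the corresponding iterate of $g$ is again order-reversing. The product $pq$ (equivalently $\operatorname{lcm}(p,q)$, which is odd since both $p$ and $q$ are) does exactly this, and everything else is routine.
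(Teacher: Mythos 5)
Your proof is correct: taking the odd common period $k = pq$ of the two cycles and using that $g^k$ is order-reversing while fixing both $a$ and $b$ immediately forces $a = b$, and the parity bookkeeping is handled properly. The paper itself gives no proof, citing the result as Lemma~4.3 of \cite{OckAlg}, and your argument is the natural one for this statement, so there is nothing to reconcile.
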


As we noted earlier when discussing the Cornish algebra $\C$ of type $F$ used in Theorem~\ref{thm:genM}, we can extend the $+/-$ labelling of the operation symbols to the set $T$ of unary terms in the signature $F$ in the natural way: $t(v)\in T^+$ if and only if $t(v)$ contains an even number of operation symbols from $F^+$.  

\begin{theorem}\label{thm:mainnew} 
Let $\CB$ be a finite set of finite Cornish algebras of type~$F$ and let $\CY = \{D(\A)\mid \A\in \CB\}$.
Then the algebras in $\CB$ are quasi-primal and share a common ternary discriminator term provided $\CY$ has the following properties:

\begin{enumerate}[ \normalfont(i)]
\item
each $\X\in \CY$ has no non-empty proper substructures,
\item
there exists a unary term $t$ in the signature $F$ 
such that $t\in T^-$ and, for all $\X\in \CY$ and all $a\in X$, the orbit of $a$ under $t^\X$ eventually reaches an odd cycle.

\end{enumerate}
Moreover, if we replace \textup{(ii)} by
\begin{enumerate}[ \normalfont(i)]

\item[\normalfont(ii)$'$] there exists a unary term $t$ in the signature $F$
such that $t\in T^-$ and, for all $\X\in \CY$, the map $t^\X$ is constant \textup(that is, $t^\X(a)=t^\X(b)$, for all $a,b\in X$\textup),

\end{enumerate}
then the algebras in $\CA$ are semi-primal and share a common ternary discriminator term.
\end{theorem}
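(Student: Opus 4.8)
The plan is to deduce both statements from the external criteria already in hand, namely condition~(3) of Theorem~\ref{thm:qpcharCorn} for the quasi-primal assertion and condition~(3) of Theorem~\ref{thm:spcharCorn} for the semi-primal one; the hypotheses~(i) and~(ii) (respectively~(ii)$'$) are tailored so that the ``no non-empty proper substructures'' clause is exactly~(i) while the remaining comparability clause is forced by the order-reversing term~$t$. Two routine facts will be used repeatedly. First, since $t\in T^-$ involves an odd number of operations from~$F^-$, its term map $t^\Y$ is an order-reversing self-map of every Cornish space~$\Y$. Second, as the morphisms of $\CXF$ preserve the operations in~$F$, they preserve the composite operation~$t$, so that $\phi_i\circ t^{\X_i}=t^\Y\circ\phi_i$ and hence $\phi_i\bigl((t^{\X_i})^N(x)\bigr)=(t^\Y)^N\bigl(\phi_i(x)\bigr)$ for every $N$ and every $x$.

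For the quasi-primal claim I would verify condition~(3) of Theorem~\ref{thm:qpcharCorn}; its first half is~(i). So let $\X_1,\X_2\in\CY$, let $\Y$ be a Cornish space, let $\phi_1\colon\X_1\to\Y$ and $\phi_2\colon\X_2\to\Y$ be jointly surjective, and suppose $\phi_1(a)$ is comparable with $\phi_2(b)$ for some $a\in X_1$ and $b\in X_2$. I must produce $c\in X_1$ and $d\in X_2$ with $\phi_1(c)=\phi_2(d)$. First I would transfer the odd-cycle hypothesis from $\X_1$ to~$\Y$: by~(ii) there are $n<m$ with $m-n$ odd and $(t^{\X_1})^m(a)=(t^{\X_1})^n(a)$, and applying $\phi_1$ gives $(t^\Y)^m\bigl(\phi_1(a)\bigr)=(t^\Y)^n\bigl(\phi_1(a)\bigr)$. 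Thus the orbit of $\phi_1(a)$ under $t^\Y$ is eventually periodic with minimal period dividing the odd number $m-n$, so it reaches an odd cycle of~$t^\Y$; the same holds for $\phi_2(b)$.

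The key step is to drive the given comparability into the antichain provided by Lemma~\ref{lem:antichain}. I would choose $N$ \emph{even} and large enough that both $(t^\Y)^N\bigl(\phi_1(a)\bigr)$ and $(t^\Y)^N\bigl(\phi_2(b)\bigr)$ lie in their respective odd cycles of~$t^\Y$. Since $t^\Y$ is order-reversing, applying it an even number of times preserves order, so these two elements remain comparable. But by Lemma~\ref{lem:antichain} the union of the odd cycles of $t^\Y$ is an antichain, and comparable members of an antichain coincide; hence $(t^\Y)^N\bigl(\phi_1(a)\bigr)=(t^\Y)^N\bigl(\phi_2(b)\bigr)$. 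Setting $c:=(t^{\X_1})^N(a)$ and $d:=(t^{\X_2})^N(b)$ and using the commutation identity, we obtain $\phi_1(c)=(t^\Y)^N\bigl(\phi_1(a)\bigr)=(t^\Y)^N\bigl(\phi_2(b)\bigr)=\phi_2(d)$, which is condition~(3) of Theorem~\ref{thm:qpcharCorn}.

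For the semi-primal claim under~(ii)$'$ I would verify condition~(3) of Theorem~\ref{thm:spcharCorn}, where now $\X_1=\X_2=\X$ and a single $c$ must satisfy $\phi_1(c)=\phi_2(c)$. Let $c_0$ be the constant value of $t^\X$. For every $x\in X$ the commutation identity gives $t^\Y\bigl(\phi_i(x)\bigr)=\phi_i\bigl(t^\X(x)\bigr)=\phi_i(c_0)$; taking $x=c_0$ shows that each $\phi_i(c_0)$ is a fixed point of $t^\Y$, that is, a one-element (odd) cycle. If $\phi_1(a)$ is comparable with $\phi_2(b)$, then applying the order-reversing map $t^\Y$ yields the comparable pair $t^\Y\bigl(\phi_1(a)\bigr)=\phi_1(c_0)$ and $t^\Y\bigl(\phi_2(b)\bigr)=\phi_2(c_0)$; as both lie in the antichain of odd cycles of Lemma~\ref{lem:antichain}, they coincide, so $c:=c_0$ works. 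The main obstacle I anticipate is precisely the core of the quasi-primal step: recognising that iterating the order-reversing term an \emph{even} number of times keeps the two images comparable while pushing them into the odd-cycle antichain of Lemma~\ref{lem:antichain}. Once that is seen, the transfer of the odd-cycle property along $\phi_i$ and the fixed-point reformulation in the semi-primal case are entirely routine.
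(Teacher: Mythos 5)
Your proposal is correct and takes essentially the same approach as the paper: it verifies condition (3) of Theorem~\ref{thm:qpcharCorn} (resp.\ Theorem~\ref{thm:spcharCorn}) by iterating the order-reversing term $t$ until both images land in odd cycles of $t^\Y$ and then applying the antichain Lemma~\ref{lem:antichain}, with the constant value of $t^\X$ serving as the fixed point in the semi-primal case. The only cosmetic difference is your insistence that the exponent $N$ be even---this is unnecessary, since comparability is preserved by order-reversing maps as well, and the paper simply takes any sufficiently large $m$; your explicit transfer of the odd-cycle property to $\Y$ via divisibility of the eventual period is a welcome elaboration of a step the paper leaves implicit.
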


\begin{proof}
We shall use the equivalence of (1) and (3) in Theorem~\ref{thm:qpcharCorn}.
Assume that (i) and (ii) hold. Let $\X_1, \X_2\in \CY$, let $\phi_1\colon \X_1\to \Y$, $\phi_2\colon \X_2 \to \Y$ be  jointly surjective morphisms, for some Cornish space $\Y$, and assume that $\phi_1(a)$ and $ \phi_2(b)$ are comparable, for some $a\in X_1$ and $b\in X_2$. We need to find $c\in X_1$ and $d\in X_2$ with $\phi_1(c) = \phi_2(d)$.

By (ii), there exists a unary term $t\in T^-$ such that the orbits of $a$ and $b$ under $t^{\X_1}$ and $t^{\X_2}$, respectively, eventually reach odd cycles. Choose $m\in \mathbb N$ large enough so that $c:= (t^{\X_1})^m(a)$ and $d:=(t^{\X_2})^m(b)$ belong to odd cycles of $t^{\X_1}$ and $t^{\X_2}$, respectively. 
We have
\begin{align*}
\phi_1(c) &= \phi_1((t^{\X_1})^m(a)) = (t^\Y)^m(\phi_1(a)), \text{ and}\\
\phi_2(d) &= \phi_2((t^{\X_2})^m(b)) = (t^\Y)^m(\phi_2(b)).
\end{align*}
Hence, since $\phi_1(a)$ and $\phi_2(b)$ are comparable, so are 
$\phi_1(c)$ and $\phi_2(d)$. As $c$ and $d$ belong to odd cycles of $t^{\X_1}$ and $t^{\X_2}$, respectively, it follows that $\phi_1(c)$ and $\phi_2(d)$ belong to odd cycles of~$t^\Y$. By Lemma~\ref{lem:antichain} we have $\phi_1(c) = \phi_2(d)$, as required. As $\X$ has no non-empty proper substructures, by~(i), it follows from Theorem~\ref{thm:qpchar} that the algebras in $\CA$ are quasi-primal and share a common ternary discriminator term.

Finally, assume that (i) and (ii)$'$ hold. As (ii)$'$ implies (ii), it follows that the algebras in $\CA$ share a common ternary discriminator term. It remains to prove that $\A = E(\X)$ is semi-primal, for all $\X\in \CY$. Let $\X\in \CY$ and let $e$ be the constant value of $t^\X$. Then, in the calculations above with $\X = \X_1 = \X_2$, starting from the assumption that there exist $a, b\in X$ with $\phi_1(a)$ comparable with $\phi_2(b)$, we have $c = d = e$ and $m = 1$, whence $\phi_1(c) = \phi_2(c)$.  Hence $\A$ is semi-primal by Theorem~\ref{thm:spcharCorn}.
\end{proof}

It is very easy to derive the following corollary.

\begin{corollary}\label{thm:mainold}
Let $\CB$ be a finite set of finite Cornish algebras of type~$F$ and let $\CY = \{D(\A)\mid \A\in \CB\}$.
Then the algebras in $\CB$ are semi-primal and share a common ternary discriminator term provided $\CY$ has the following properties:
\begin{enumerate}[ \normalfont(i)]
\item
each $\X\in \CY$ has no non-empty proper substructures,
\item
the set $F^-$ is non-empty,

\item
there exists a unary term $t$ in the signature $F$ such that $t^\X$ is constant, for all $\X\in \CY$.
\end{enumerate}
\end{corollary}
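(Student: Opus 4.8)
The plan is to reduce this corollary to the semi-primal half of Theorem~\ref{thm:mainnew}, whose hypotheses are condition~(i) together with condition~(ii)$'$. Since condition~(i) of the corollary is word-for-word condition~(i) of Theorem~\ref{thm:mainnew}, the entire content of the proof is to manufacture, from the present hypotheses~(ii) and~(iii), a single unary term witnessing~(ii)$'$: namely a term $s\in T^-$ such that $s^\X$ is constant for every $\X\in\CY$.

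First I would take the term $t$ provided by~(iii), so that $t^\X$ is constant on each $\X\in\CY$. If $t$ happens to lie in $T^-$, then $s:=t$ already satisfies~(ii)$'$ and nothing more is needed. Otherwise $t\in T^+$, and this is exactly where hypothesis~(ii) is used: since $F^-\ne\emptyset$, I would fix some $f\in F^-$ and set $s(v):=t\big(f(v)\big)$.

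It then remains to make two routine checks. For the parity, the term $s$ contains precisely one more operation symbol from $F^-$ than $t$ does, so $s$ and $t$ have opposite parity; hence $s\in T^-$ whenever $t\in T^+$. For constancy, we have $s^\X=t^\X\circ f^\X$, and since $t^\X$ is already constant, so is the composite $s^\X$, for every $\X\in\CY$. Thus $s\in T^-$ and $s^\X$ is constant on every member of $\CY$, which is exactly condition~(ii)$'$.

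Finally, with conditions~(i) and~(ii)$'$ established, the semi-primal conclusion of Theorem~\ref{thm:mainnew} gives at once that the algebras in $\CB$ are semi-primal and share a common ternary discriminator term. There is no genuine obstacle here---consistent with the remark that the corollary is \emph{very easy}; the only point demanding a moment's attention is the parity bookkeeping, together with recognising that hypothesis~(ii) is present precisely to supply the order-reversing symbol needed to flip $t$ into $T^-$ in the case $t\in T^+$.
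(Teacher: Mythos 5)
Your proof is correct and follows essentially the same route as the paper's: the paper likewise observes that if $t\in T^-$ then Theorem~\ref{thm:mainnew} applies directly, and otherwise replaces $t$ by $h(v):=t(g(v))$ for any $g\in F^-$, which lies in $T^-$ and is still constant on each $\X\in\CY$. Your parity and constancy checks match the paper's (one-line) justification exactly, so there is nothing to add.
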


\begin{proof}Assume (i)--(iii). If $t\in T^-$, then Theorem~\ref{thm:mainnew} applies. If $t\in T^+$, then $h:=t(g(v))$ belongs to $T^-$ and $h^\X$ is constant, where $g$ is any element of $F^-$, and again Theorem~\ref{thm:mainnew} applies.  
\end{proof}

We can now use our results to show that the characterisation of quasi-primal Ockham algebras (Davey, Nguyen, Pitkethly~\cite{OckAlg}) extends to a characterisation of discriminator varieties of Ockham algebras. We will also use Theorem~\ref{thm:mainnew} and Corollary~\ref{thm:mainold} to give examples of quasi-primal Cornish algebras that share a common ternary discriminator term and to give an infinite family of semi-primal Cornish algebras.

\begin{figure}[ht]
\begin{tikzpicture}
        \node[anchor=west] at (6,0) {($m$ odd)\quad $t(v) := g(v)$};
        \node at (-0.75,0) {$\CT_m$};
     \node[unshaded] (0) at (0,0) {};
         \node[label, anchor=south] at (0) {$0$};
     \node[unshaded] (1) at (1,0) {};
         \node[label, anchor=south] at (1) {$1$};
     \node[unshaded] (2) at (2,0) {};
         \node[label, anchor=south] at (2) {$2$};
     \node[invisible] (3) at (3,0) {};
     \node at (3.5,0) {$\cdots$};
     \node[invisible] (4) at (4,0) {};
     \node[unshaded] (5) at (5,0) {};
         \node[label, anchor=south] at (5) {$m{-}1$};
     \draw[curvy, dotted] (0) to [bend left] (1);
     \draw[curvy, dotted] (1) to [bend left] (2);
     \draw[curvy, dotted] (2) to [bend left] (3);
     \draw[curvy, dotted] (4) to [bend left] (5);
     \draw[curvy, dotted] (5) to [bend left=20] (0);
\end{tikzpicture}
\caption{The Ockham space $\CT_m$}
\label{fig:OckSpace}
\end{figure}

\begin{theorem}[See {\cite[Theorem 4.5]{OckAlg}}]\label{ex:Ock}
A variety of Ockham algebras is a discriminator variety if and only if it is generated by a finite set $\CB$ of Ockham algebras such that, for each $\A\in \CB$, we have $D(\A) \cong \CT_m$, for some odd~$m\in \mathbb N$, where $\CT_m$ is the Ockham space shown in Figure~\ref{fig:OckSpace}: $\langle C_m; \le\rangle$ is an antichain and $g$ is a cycle.
\end{theorem}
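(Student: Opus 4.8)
The plan is to read off both implications from the Cornish-algebra machinery already in place, specialised to $F=F^-=\{f\}$ and $g:=f^\X$. Sufficiency will follow immediately from Theorem~\ref{thm:mainnew}; for necessity I will reduce to the quasi-primal case via Theorem~\ref{thm:nodiscvars}, extract the single-cycle shape of the dual space from simplicity, and then pin down the parity of the cycle.

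\emph{Sufficiency.} Suppose $\CV$ is generated by a finite set $\CB$ with $D(\A)\cong\CT_m$ for some odd $m$, for each $\A\in\CB$, and put $\CY=\{D(\A)\mid\A\in\CB\}$. I will verify the two hypotheses of Theorem~\ref{thm:mainnew}. Condition~(i) holds because in $\CT_m$ the map $g$ is a single $m$-cycle, so the forward orbit of any point is all of $\CT_m$ and hence $\CT_m$ has no non-empty proper substructure. For condition~(ii) take $t:=f$; since $F^-=\{f\}$ we have $t\in T^-$, and on each $\CT_m\in\CY$ the map $t^\X=g$ is itself a cycle of odd length $m$, so the orbit of every point reaches an odd cycle at once. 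Theorem~\ref{thm:mainnew} then yields that the algebras in $\CB$ are quasi-primal and share a common ternary discriminator term, whence $\CV=\Var\CB$ is a discriminator variety by Facts~\ref{facts}(b).

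\emph{Necessity.} Suppose $\CV$ is a discriminator variety. As $F$ is finite, Theorem~\ref{thm:nodiscvars} gives $\CV=\Var\CB$ for a finite set $\CB$ of quasi-primal Ockham algebras sharing a common discriminator term, so it suffices to show that $D(\A)\cong\CT_m$ for some odd $m$ whenever $\A$ is a quasi-primal Ockham algebra. Write $\X=D(\A)$. Since $\A$ is quasi-primal it is simple, so by Lemma~\ref{lem:sub-alg} the finite space $\X$ has no non-empty proper substructure; as the substructure generated by a point $x$ is its forward orbit $\{x,g(x),g^2(x),\dots\}$, this forces $g$ to be a bijection all of whose orbits are the whole of $X$, that is, a single $m$-cycle with $m=|X|$. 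It then remains to show that $m$ is odd, for once this is known the antichain property is automatic: the single cycle is then an odd cycle of the order-reversing map $g$, and Lemma~\ref{lem:antichain} says the union of the odd cycles of $g$ is an antichain, so $X$ is an antichain and $\X\cong\CT_m$.

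\emph{The main obstacle} is proving that $m$ is odd, and here quasi-primality must genuinely be used (an even single cycle, e.g.\ the $2$-element chain with $g$ the transposition, is a perfectly good simple Ockham space, dual to the non-quasi-primal three-element Kleene algebra). I will argue by contradiction: assuming $m$ even, I construct a Cornish space $\Y$ and a jointly surjective pair violating condition~(3) of Theorem~\ref{thm:qpcharCorn}. Label the points of $\X$ as $v_0,\dots,v_{m-1}$ with $g(v_i)=v_{i+1}$ (indices mod $m$), and let $\Y$ consist of two disjoint copies $v_i^1,v_i^2$ of $\X$, with $g_\Y$ the shift on each copy and with the order generated by the two copies of $\le_X$ together with the single extra relation $v_0^1\le v_0^2$, closed under transitivity and the order-reversing law for $g_\Y$. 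The key computation is that this law propagates a relation $v_p\le v_q$ to $v_{p+2j}\le v_{q+2j}$ and $v_{q+2j+1}\le v_{p+2j+1}$ for all $j$; because $m$ is even, the induced cross-copy relations sit at even positions in one direction and at odd positions in the other, so they are mutually consistent and $\le_Y$ is a genuine partial order keeping the two copies disjoint. The inclusions $\phi_1,\phi_2\colon\X\to\Y$ are then jointly surjective morphisms with disjoint images yet with $\phi_1(v_0)\le\phi_2(v_0)$ comparable, which contradicts Theorem~\ref{thm:qpcharCorn}(3). The same propagation explains why no such $\Y$ exists for odd $m$: there $2j$ runs through all residues, forcing $v_i^1=v_i^2$ and collapsing the copies, exactly the degeneracy consistent with $\CT_m$ being quasi-primal. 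The remaining routine points are the transitivity and $g_\Y$-compatibility of $\le_Y$ and the verification that $\phi_1,\phi_2$ are Cornish-space morphisms.
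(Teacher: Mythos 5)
Your proposal is correct, and its skeleton coincides with the paper's: sufficiency via Theorem~\ref{thm:mainnew} with $t(v):=g(v)$, then necessity by reducing to quasi-primal generators via Theorem~\ref{thm:nodiscvars}, extracting the single-cycle shape from simplicity via Lemma~\ref{lem:sub-alg}, and settling the odd case with Lemma~\ref{lem:antichain}. The only genuine divergence is the gadget refuting an even cycle. The paper maps $\X$ \emph{onto a fixed three-element space} $u<v<w$ with $g(u)=w$, $g(v)=v$, $g(w)=u$ (the $g$-reduct of $\Y_3$ in Figure~\ref{fig:QPCornishfam}): choosing $a$ minimal, evenness of $m$ makes the even iterates of $a$ minimal and the odd iterates maximal, so sending these classes to $u$ and $w$, paired with the constant map onto $v$, gives jointly surjective morphisms with disjoint but comparable images, contradicting Theorem~\ref{thm:qpcharCorn} exactly as you do. Your construction instead \emph{doubles} $\X$, gluing two copies by the single relation $v_0^1\le v_0^2$ and closing under transitivity and the order-reversing rule for $g$. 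This is correct: the cross relations generated are $v_{2k}^1\le v_{2k}^2$ and $v_{2k+1}^2\le v_{2k+1}^1$ (composed with the within-copy order), and any cycle in the generated relation collapses, since the cross steps preserve indices, to a cycle in $\le_X$ forcing an even residue to equal an odd one mod $m$ --- impossible for $m$ even, so antisymmetry holds and the two inclusions do the job. Comparing the two: the paper's target is tiny and fixed, with order-preservation and $g$-equivariance immediate from the min/max bipartition, so there is essentially nothing to verify; your doubling carries the burden of checking that the generated relation is a partial order (which you rightly flag and which does go through), but in exchange it uses no information about the order on $\X$ beyond $g$ being order-reversing --- the bipartition fact that the paper needs is precisely what your parity bookkeeping replaces --- and it makes transparent, as you note, exactly where evenness of $m$ enters and why the construction degenerates for odd $m$.
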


\begin{proof}
The backward implication follows from Theorem~\ref{thm:mainnew} with $t(v):= g(v)$. For the forward implication, assume that $\CA$ is a discriminator variety of Ockham algebras. By Theorem~\ref{thm:nodiscvars}, we have $\CA = \Var\CB$ for some finite set $\CB$ of quasi-primal Ockham algebras that share a common ternary discriminator term. Let $\A\in \CB$ and define $\X = D(\A) =\langle X; g, \le, \T\rangle$. It remains to show that $\X \cong \CT_m$, for some odd~$m\in \mathbb N$. 

As $\A$ is quasi-primal and therefore simple, it follows from Lemma~\ref{lem:sub-alg} that $\X$ has no non-empty proper substructures, and consequently $g$ is simply a cycle. Let the size of $\X$ be~$m$. If $m$ is odd, then by Lemma~\ref{lem:antichain}, $\X$ is isomorphic to~$\CT_m$, and we are done. 
Suppose, by way of contradiction, that $m$ is even. We will find an Ockham space $\Y$ and  jointly surjective morphisms $\phi_1, \phi_2\colon \X\to \Y$ such that $\Y_1 :=\phi_1(\X)$ and $\Y_2 :=\phi_2(\X)$ are proper substructures of $\Y$ with $\Y \ne \Y_1\dotcup \Y_2$. By Theorem~\ref{thm:qpcharCorn} this contradicts our assumption that $E(\X)$ is quasi-primal.

Let $\Y = \langle \{u, v, w\}; g, \le, \T\rangle$ be the Ockham space with $u < v < w$ and $g(u) = w$, $g(v) = v$, $g(w) = u$, that is, $\Y$ is the reduct of the Cornish space $\Y_3$ of Figure~\ref{fig:QPCornishfam} obtained by removing the order-preserving map~$f$. We now define the jointly surjective morphisms $\phi_1, \phi_2\colon \X\to \Y$. 
Choose $a$ to be any minimal element of the ordered set $\langle X; \le\rangle$. Then, since the map $g$ is a dual automorphism of $\langle X; \le\rangle$, it follows that $X_1 :=\{g^n(a)\mid n \text{ is even}\}$ is a set of minimal elements of $\langle X; \le\rangle$ and $X_2 :=\{g^n(a)\mid n \text{ is odd}\}$ is a set of maximal elements of $\langle X; \le\rangle$ with $X = X_1\dotcup X_2$. (This is true even if $\langle X; \le\rangle$ happens to be an antichain.) We can now define the required Ockham space morphisms $\phi_1, \phi_2\colon \X \to \Y$ by
$\phi_1(x) = u$, if $x\in X_1$, and $\phi_1(x) = w$, if $x\in X_2$,
and $\phi_2(x) = v$, for all $x\in X$. 
\end{proof}

\begin{figure}[ht]
\begin{tikzpicture}
\begin{scope}
     \node at (0,0) {$\X_1$};
   \node[unshaded] (0) at (0,1) {};
   \node[unshaded] (1) at (1.33,1) {};
   \node[unshaded] (2) at (2.66,1) {};
   \node[unshaded] (3) at (4,1) {};
   \node[unshaded] (5) at (1,0) {};
   \node[unshaded] (6) at (2,0) {};
   \node[unshaded] (7) at (3,0) {};
   \draw[straight, <->] (0) to (5);
   \draw[straight, ->] (3) to (7);
   \draw[loopy] (1) to [out=115,in=75] (1);
   \draw[loopy] (2) to [out=115,in=75] (2);
   \draw[loopy] (6) to [out=115,in=75] (6);
   \draw[loopy] (7) to [out=115,in=75] (7);
   \draw[curvy, dotted] (0) to [bend right=20] (1);
   \draw[curvy, dotted] (1) to [bend right=20] (2);
   \draw[curvy, dotted] (2) to [bend right=20] (3);
   \draw[curvy, dotted] (3) to [bend right] (0);
   \draw[curvy, dotted] (5) to [bend right=20] (6);
   \draw[curvy, dotted] (6) to [bend right=20] (7);
   \draw[curvy, dotted] (7) to [bend right=50, min distance=0.7cm](5);
   \node[anchor=base] at (2,-1) {$t(v) := f^2(g(v))$};
\end{scope}
   \begin{scope}[xshift=5.325cm]
     \node at (0,0) {$\X_2$};
     \node[unshaded] (u) at (1,0) {};
       \node[label, anchor=east] at (u) {$u$};
     \node[unshaded] (v) at (1,1) {};
       \node[label, anchor=east] at (v) {$v$};
     \node[unshaded] (w) at (1,2) {};
       \node[label, anchor=east] at (w) {$w$};
     \draw[order] (u) to (v);
     \draw[order] (u) to (v);
     \draw[order] (v) to (w);
     \draw[curvy, ->] (u) to [bend left] (v);
     \draw[curvy, ->] (v) to [bend left] (w);
     \draw[loopy] (w) to [out=115,in=75] (w);
     \draw[loopy, dotted] (v) to [out=25,in=-15] (v);
     \draw[curvy, <->, dotted] (u) to [bend right=55, min distance=0.5cm] (w);
   \node[anchor=base] at (1,-1) {$t(v) := f^2(g(v))$};
     \end{scope}
\begin{scope}[xshift=8.75cm]
   \node at (0.25,0) {$\X_3$};
   \node[unshaded] (0) at (1,2) {};
   \node[unshaded] (1) at (2,2) {};
   \node[unshaded] (2) at (3,2) {};
   \node[unshaded] (3) at (1,1) {};
   \node[unshaded] (4) at (2,1) {};
   \node[unshaded] (5) at (3,1) {};
   \node[unshaded] (6) at (1,0) {};
   \node[unshaded] (7) at (2,0) {};
   \node[unshaded] (8) at (3,0) {};
   \draw[curvy, dotted] (0) to [bend right=20] (1);
   \draw[curvy, dotted] (1) to [bend right=20] (2);
   \draw[curvy, dotted] (2) to [bend right=35] (0);
   \draw[curvy, dotted] (3) to [bend right=20] (4);
   \draw[curvy, dotted] (4) to [bend right=20] (5);
   \draw[curvy, dotted] (5) to [bend right=35] (3);
   \draw[curvy, dotted] (6) to [bend right=20] (7);
   \draw[curvy, dotted] (7) to [bend right=20] (8);
   \draw[curvy, dotted] (8) to [bend right=35] (6);
   \draw[curvy] (0) to [bend right=20] (3);
   \draw[curvy] (3) to [bend right=20] (6);
   \draw[curvy] (6) to [bend right=35] (0);
   \draw[curvy] (1) to [bend right=20] (4);
   \draw[curvy] (4) to [bend right=20] (7);
   \draw[curvy] (7) to [bend right=35] (1);
   \draw[curvy] (2) to [bend right=20] (5);
   \draw[curvy] (5) to [bend right=20] (8);
   \draw[curvy] (8) to [bend right=35] (2);
   \node[anchor=base] at (2,-1) {$t(v) := g(v)$};
\end{scope}
\end{tikzpicture}
\caption{Some Cornish spaces whose Priestley duals are quasi-primal}
\label{fig:CornSpace}
\end{figure}
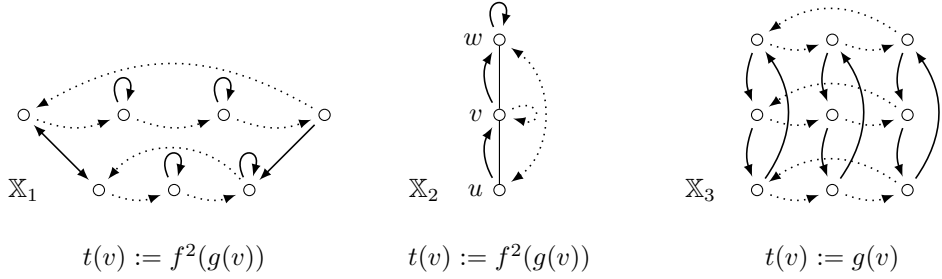
\begin{example}\label{ex:Corn}
Some examples of Cornish spaces $\X$ such that $\A=E(\X)$ is quasi-primal are given in Figure~\ref{fig:CornSpace}. The Cornish spaces are of type $F = \{f, g\}$, where $F^+ = \{f\}$ and $F^- = \{g\}$: the map $f^{\X_i}$ is indicated by solid lines and $g^{\X_i}$ is indicated by dotted lines.
The quasi-primality of $\A_i=E(\X_i)$ follows from Theorem~\ref{thm:mainnew}: in each case, the indicated term $t(v)$ belongs to $T^-$ and gives the required odd cycles. In fact, the term $t(v) :=f^2(g(v))$ belongs to $T^-$ and gives the required odd cycles on $\X_1$, $\X_2$ and $\X_3$. It follows that $\A_1 :=E(\X_1)$, $\A_2 :=E(\X_2)$ and $\A_3 :=E(\X_3)$ are quasi-primal and share a common ternary discriminator term, whence they collectively generate a discriminator variety.
\end{example}

\begin{example}\label{ex:Corn2}
An infinite family $\A_1$, $\A_2$, $\A_3$, \dots\ of semi-primal Cornish algebras of type $F = \{f, g\}$, where $F^+ = \{f\}$ and $F^- = \{g\}$, is given in Figure~\ref{fig:QPCornishfam}. In each case $\A_n := E(\Y_n)$, the maps $f^{\Y_n}$ and $f^{\A_n}$ are indicated by solid lines, and $g^{\Y_n}$ and $g^{\A_n}$ are indicated by dotted lines. The semi-primality of $\A_n$ follows from Corollary~\ref{thm:mainold}: the operation $g$ belongs to $F^-$ and $t(v) = f^{n-1}(v)$ is one possible choice for the term required in (iii).
\end{example}


\begin{figure}[ht]
\begin{tikzpicture}
   \begin{scope}
     \node at (1,-1) {$\Y_2$};
     \node[unshaded] (u) at (1,0) {};
       \node[label, anchor=east] at (u) {$u_1$};
     \node[unshaded] (v) at (1,1) {};
       \node[label, anchor=east] at (v) {$u_2$};
     \draw[order] (u) to (v);
     \draw[curvy] (u) to [bend left] (v);
     \draw[loopy] (v) to [out=115,in=75] (v);
     \draw[curvy, <->, dotted] (u) to [bend right] (v);
     \end{scope}
   \begin{scope}[xshift=1.75cm]
     \node at (1,-1) {$\A_2$};
     \node[unshaded] (0) at (1,0) {};
       \node[label, anchor=east] at (0) {$0$};
     \node[unshaded] (1) at (1,1) {};
       \node[label, anchor=east] at (1) {$a_1$};
     \node[unshaded] (2) at (1,2) {};
       \node[label, anchor=east] at (2) {$1$};
     \draw[order] (0) to (1);
     \draw[order] (1) to (2);
     \draw[loopy] (2) to [out=115,in=75] (2);
     \draw[loopy] (0) to [out=-65,in=-105] (0);
     \draw[loopy, dotted] (1) to [out=25,in=-15] (1);
     \draw[curvy] (1) to [bend left] (2);
     \draw[curvy, <->, dotted] (0) to [bend right=55, min distance=0.5cm] (2);
     \end{scope}
   \begin{scope}[xshift=4.25cm]
     \node at (1,-1) {$\Y_3$};
     \node[unshaded] (u) at (1,0) {};
       \node[label, anchor=east] at (u) {$u_1$};
     \node[unshaded] (v) at (1,1) {};
       \node[label, anchor=east] at (v) {$u_2$};
     \node[unshaded] (w) at (1,2) {};
       \node[label, anchor=east] at (w) {$u_3$};
     \draw[order] (u) to (v);
     \draw[order] (v) to (w);
     \draw[curvy, ->] (u) to [bend left] (v);
     \draw[curvy, ->] (v) to [bend left] (w);
     \draw[loopy] (w) to [out=115,in=75] (w);
     \draw[loopy, dotted] (v) to [out=25,in=-15] (v);
     \draw[curvy, <->, dotted] (u) to [bend right=55, min distance=0.5cm] (w);
     \end{scope}
   \begin{scope}[xshift=6cm]
     \node at (1,-1) {$\A_3$};
     \node[unshaded] (0) at (1,0) {};
       \node[label, anchor=east] at (0) {$0$};
     \node[unshaded] (1) at (1,1) {};
       \node[label, anchor=east] at (1) {$a_1$};
     \node[unshaded] (2) at (1,2) {};
       \node[label, anchor=east] at (2) {$a_2$};
     \node[unshaded] (3) at (1,3) {};
       \node[label, anchor=east] at (3) {$1$};
     \draw[order] (0) to (1);
     \draw[order] (1) to (2);
     \draw[order] (2) to (3);
     \draw[loopy] (3) to [out=115,in=75] (3);
     \draw[loopy] (0) to [out=-65,in=-105] (0);
     \draw[curvy, <->, dotted] (1) to [bend right] (2);
     \draw[curvy, <->, dotted] (0) to [bend right] (3);
     \draw[curvy] (1) to [bend left] (2);
     \draw[curvy] (2) to [bend left] (3);
     \end{scope}
   \begin{scope}[xshift=8.5cm]
     \node at (1,-1) {$\Y_4$};
     \node[unshaded] (u) at (1,0) {};
       \node[label, anchor=east] at (u) {$u_1$};
     \node[unshaded] (v) at (1,1) {};
       \node[label, anchor=east] at (v) {$u_2$};
     \node[unshaded] (w) at (1,2) {};
       \node[label, anchor=east] at (w) {$u_3$};
     \node[unshaded] (x) at (1,3) {};
       \node[label, anchor=east] at (x) {$u_4$};
     \draw[order] (u) to (v);
     \draw[order] (v) to (w);
     \draw[order] (w) to (x);
     \draw[curvy, ->] (u) to [bend left] (v);
     \draw[curvy, ->] (v) to [bend left] (w);
     \draw[curvy, ->] (w) to [bend left] (x);
     \draw[loopy] (x) to [out=115,in=75] (x);
     \draw[curvy, <->, dotted] (u) to [bend right] (x);
     \draw[curvy, <->, dotted] (v) to [bend right] (w);
     \end{scope}
   \begin{scope}[xshift=10.25cm]
     \node at (1,-1) {$\A_4$};
     \node[unshaded] (0) at (1,0) {};
       \node[label, anchor=east] at (0) {$0$};
     \node[unshaded] (1) at (1,1) {};
       \node[label, anchor=east] at (1) {$a_1$};
     \node[unshaded] (2) at (1,2) {};
       \node[label, anchor=east] at (2) {$a_2$};
     \node[unshaded] (3) at (1,3) {};
       \node[label, anchor=east] at (3) {$a_3$};
     \node[unshaded] (4) at (1,4) {};
       \node[label, anchor=east] at (4) {$1$};
     \draw[order] (0) to (1);
     \draw[order] (1) to (2);
     \draw[order] (2) to (3);
     \draw[order] (3) to (4);
     \draw[loopy] (4) to [out=115,in=75] (4);
     \draw[loopy] (0) to [out=-65,in=-105] (0);
     \draw[loopy, dotted] (2) to [out=25,in=-15] (2);
     \draw[curvy, <->, dotted] (1) to [bend right=55, min distance=0.5cm] (3);
     \draw[curvy, <->, dotted] (0) to [bend right=40, min distance=0.1cm] (4);
     \draw[curvy] (1) to [bend left] (2);
     \draw[curvy] (2) to [bend left] (3);
     \draw[curvy] (3) to [bend left] (4);
     \end{scope}
\end{tikzpicture}
\caption{An infinite family of semi-primal Cornish algebras.}\label{fig:QPCornishfam}
\end{figure}
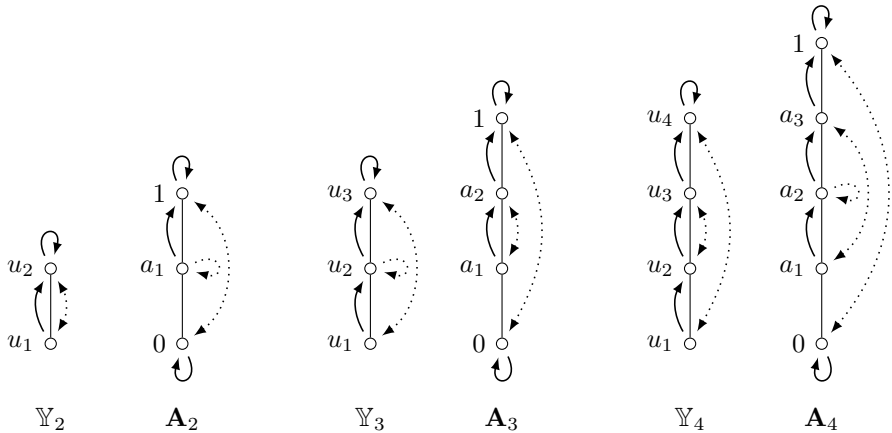

\end{document}